\documentclass[pdflatex,sn-mathphys-num]{sn-jnl}
\usepackage{graphicx}%
\usepackage{multirow}%
\usepackage{amsmath,amssymb,amsfonts}%
\usepackage{amsthm}%
\usepackage{mathrsfs}%
\usepackage[title]{appendix}%
\usepackage{xcolor}%
\usepackage{textcomp}%
\usepackage{manyfoot}%
\usepackage{booktabs}%
\usepackage{algorithm}%
\usepackage{algorithmicx}%
\usepackage{algpseudocode}%
\usepackage{listings}%
\usepackage{indentfirst}
\numberwithin{equation}{section}

\newcommand{\G}{\mathbb{G}}
\newcommand{\dualG}{\widehat{\G}}
\newcommand{\Lp}[2]{L^{#1}(#2)}

\newcommand{\Bop}{\mathcal{B}}

\newcommand{\tens}{\otimes}
\newcommand{\id}{\operatorname{id}}
\newcommand{\Tr}{\operatorname{Tr}}
\newcommand{\Irr}{\operatorname{Irr}}

\newcommand{\range}{\mathcal{R}}
\newcommand{\entropy}{\mathcal{H}}
\newcommand{\cF}{\mathcal{F}}
\newcommand{\cFp}[1]{\mathcal{F}_{#1}}
\newcommand{\conv}{\ast}
\newcommand{\twconvs}{\ast_{\Omega^{*}}}
\newcommand{\twconv}{\ast_{\Omega}}

\theoremstyle{thmstyleone}%
\newtheorem{theorem}{Theorem}[section]%
\newtheorem{proposition}[theorem]{Proposition}%
\newtheorem{lemma}[theorem]{Lemma}
\newtheorem{corollary}[theorem]{Corollary}

\theoremstyle{thmstyletwo}%
\newtheorem{example}[theorem]{Example}%
\newtheorem{remark}[theorem]{Remark}%

\theoremstyle{thmstylethree}%
\newtheorem{definition}[theorem]{Definition}%

\begin{document}

\title[Covariance Principle for STFT on unimodular Kac algebras]{The Time-Frequency Covariance Principle on Unimodular Kac Algebras}

\author[1]{\fnm{Xiao} \sur{Chen}}\email{chenxiao@sdu.edu.cn}
\equalcont{These authors contributed equally to this work.}

\author[2]{\fnm{Rui} \sur{Liu}}\email{ruiliu@nankai.edu.cn}
\equalcont{These authors contributed equally to this work.}

\author*[2]{\fnm{Yuxuan} \sur{Zheng}}\email{yuxuanzheng@mail.nankai.edu.cn}
\equalcont{These authors contributed equally to this work.}

\affil[1]{\orgdiv{School of Mathematics and Statistics}, \orgname{Shandong University}, \city{Weihai}, \postcode{264209}, \state{Shandong}, \country{PR China}}

\affil[2]{\orgdiv{School of Mathematical Sciences and LPMC}, \orgname{Nankai University}, \city{Tianjin}, \postcode{300071}, \country{PR China}}

\abstract{This paper extends the short-time Fourier transform (STFT), a fundamental tool in time-frequency analysis, to the quantum group setting of unimodular Kac algebras. For a unimodular Kac algebra $\G$, we introduce a time-frequency shift operator that combines left translation and modulation operators. Using a window vector in the Hilbert space $\Lp{2}{\G}$, we define the corresponding STFT and establish its essential analytic properties, including a Plancherel theorem, the Moyal identity, an inversion formula, and a fundamental identity. Furthermore, we explore the projective corepresentation structure of the time-frequency shift operator, and prove that its reflected version induces a continuous projective left representation of the dual quantum group of the quantum double. Finally, we derive the covariance principle and several uncertainty principles.}

\keywords{Short-time Fourier transform, Unimodular Kac algebras,
Locally compact quantum groups, Covariance principle, Projective corepresentations}

\maketitle

\section{Introduction}\label{sec1}

The short-time Fourier transform (STFT), also known as the Gabor transform, is a fundamental tool in time-frequency analysis. Its systematic study on locally compact abelian (LCA) groups was advanced by Feichtinger's introduction of modulation spaces \cite{Fei2003} and Gröchenig's extension of Gabor theory \cite{Gro1998}, with later developments in pseudodifferential operators \cite{Gro2007}. For a second countable LCA group $G$ with Haar measure $\mu$ and dual group $\widehat{G}$, the STFT of a function $f \in L^2(G)$ with respect to a window $g \in L^2(G)$ is defined as
\[
\phi_g f(x,\omega) = \int_G f(t)\overline{\omega(t)  g(x^{-1}t) }\, \mathrm{d}\mu(t), \quad x \in G,\; \omega \in \widehat{G}.
\]
Unlike the Fourier transform, which captures only global frequency content, the STFT provides joint time-frequency information \cite{FK1998, Gro1998, Gro2001}. A rich theory has been developed for the STFT on LCA groups, encompassing numerous uncertainty principles \cite{DLP, Ni2023, Ni2024, Gro1998} and related work on its discrete and continuous aspects \cite{Fei2009, Jak2016, Lo2013,WLH2025}.

Recent years have witnessed significant advances in generalizing time-frequency analysis to non-abelian settings, including frame theory and uncertainty principles \cite{Be2022,Gro2021,EnstadJMAA,FK,Ou2018,Ou2019, Sm2022,WLH2024}. A natural extension of this program is to formulate a complete STFT framework for non-abelian groups---one that inherently contains the classical LCA theory and establishes analogous Plancherel, Moyal, and inversion theorems. The present work undertakes this extension by defining the STFT within the category of unimodular Kac algebras. This quantum group framework not only generalizes locally compact groups but also provides the requisite harmonic analysis structure to naturally host time-frequency shift operators and their associated theory.

The primary obstacle in generalizing the STFT to non-abelian groups is the lack of a Pontrjagin-type duality at the level of group structures; the higher-dimensional unitary representations do not themselves form a group. This necessitates a broader category that unifies a locally compact group and its dual. Kac algebras, introduced independently by Vainerman--Kac \cite{Va1973,Va1974} and Enock--Schwartz \cite{Kac}, provide such a framework, generalizing Pontrjagin duality. This theory was later subsumed and extended by the powerful formalism of locally compact quantum groups (LCQGs) developed by Kustermans and Vaes \cite{kustermans2000locally,vaes2001locally,kustermans2003locally}. In this setting, every LCQG $\G$ possesses a dual $\dualG$ satisfying the biduality theorem $\widehat{\dualG} \cong \G$, thereby fully generalizing classical Pontrjagin duality.

Harmonic analysis on quantum groups has flourished since it was founded. For instance, Caspers and Cooney studied the $L^p$-Fourier transform on LCQGs using non-commutative interpolation \cite{caspers2013p,Hausdorff}. Furthermore, Jiang, Liu and Wu established various uncertainty principles for LCQGs, including the Donoho--Stark and Hirschmann--Beckner inequalities, and characterized their minimizers \cite{Liu2017,Liu}. These results form a crucial foundation for our work. Our approach also leverages the theory of projective (co)representations via unitary 2-cocycles on quantum groups, as studied in the context of Galois objects \cite{De phd}.

The paper is organized as follows. 
Section~\ref{sec2}, as a preliminary, is divided into two parts. The fist part reviews the necessary preliminaries on locally compact quantum groups, with a focus on unimodular Kac algebras and their Fourier analysis. 
In the second part of in Section~\ref{sec2}, we recall some concepts and facts related to 2-cocycles and projective (co)representations in the quantum setting, and present the relationship between the continuous projective left regular representation and the twisted convolution. 
In Section~\ref{sec3}, we introduce the fundamental objects of our study: the quantum time-frequency shift operator and the corresponding short-time Fourier transform on a unimodular Kac algebra $\G$. 
We then establish their core analytical properties, namely, a Plancherel theorem, the Moyal identity, and an inversion formula. 
The next two sections are devoted to the structural aspects. 
In Section~\ref{sec4}, we show that our time-frequency shift operator yields a projective corepresentation, and prove that its reflected version induces a projective representation of the dual of the quantum double. 
In Section~\ref{sec5}, we derive the fundamental identity and the covariance principle.
Finally, we prove several classical uncertainty principles in the last Section.

\bigskip 
\section{Preliminaries}\label{sec2}
\medskip

This section reviews the necessary background on locally compact quantum groups, with a focus on unimodular Kac algebras. We establish notations and recall fundamental concepts from non-commutative integration and quantum group theory.

All Hilbert spaces in this paper are complex; their inner products are linear in the first entry and anti-linear in the second, and are denoted by $\langle \cdot|\cdot \rangle$. 
Given a Hilbert space $H$ and vectors $\xi,\eta \in H$, let $\omega_{\xi,\eta}\in \Bop(H)_*$ be the functional defined by $\omega_{\xi,\eta}(x)=\langle x\xi|\eta \rangle$ for $x\in \Bop(H)$. 
We write $\odot$ for the algebraic tensor product. The symbol $\tens$ denotes either the tensor product of Hilbert spaces, the tensor product of operators, or the tensor product of von Neumann algebras (the intended meaning will always be clear from the context). 
We denote by $\chi$ the flip map for tensor products of algebras or of operators, and use $\Sigma$ for the flip map of Hilbert spaces. 
The symbol $\id_X$ stands for a unit operator on a space or algebra $X$, and sometimes we may write $\id$ for short when no confusion arises.

Let $M$ be a von Neumann algebra equipped with a normal, semi-finite, faithful weight $\varphi$ (abbreviated as an \emph{n.s.f. weight}) and set
\[
\mathfrak{N}_{\varphi } = \{ x \in M : \varphi(x^{*}x) < \infty \}, \qquad 
\mathfrak{M}_{\varphi } = \mathfrak{N}_{\varphi}^{*}\mathfrak{N}_{\varphi },
\]
where $\mathfrak{M}_{\varphi }$ is a $*$-subalgebra of $M$. We write $\mathfrak{M}_{\varphi }^{+}$ for the set of positive elements in $\mathfrak{M}_{\varphi }$.

Let $(H_\varphi, \pi_{\varphi},\Lambda_{\varphi})$ be the GNS construction for $\varphi$, where $\Lambda_{\varphi} : \mathfrak{N}_{\varphi} \to H_\varphi$ is the inclusion map. We may assume that $M$ act on $H_{\varphi}.$ Therefore we will omit $\pi_{\varphi}.$ Consequently,
\[
\langle \Lambda_{\varphi}(x) | \Lambda_{\varphi}(y) \rangle = \varphi(y^*x) \quad \text{for } x,y \in \mathfrak{N}_{\varphi},
\]
and 
\[
x \Lambda_{\varphi}(y) = \Lambda_{\varphi}(xy) \quad \text{for } x \in M,\; y \in \mathfrak{N}_{\varphi}.
\]

We denote by $M_{*}$ the predual of $M$, and by $M_{*}^{+}$ the set of the positive linear functionals on $M$. For $\omega \in M_{*}$, the normal functional $\bar{\omega}\in M_*$ is defined by $\bar{\omega}(x)=\overline{\omega(x^{*})}$, $x\in M$. Moreover, $M_*$ carries a natural $M$-bimodule structure given by
\[
(a\omega)(x) = \omega(xa), \qquad (\omega a)(x) = \omega(ax) \quad (a\in M,\;\omega\in M_*,\; x\in M).
\]

\subsection{Locally compact quantum groups}
\medskip

We now recall the definition of a locally compact quantum group in the von Neumann algebraic setting. For further details, we refer to \cite{kustermans2000locally} and \cite{kustermans2003locally}.

\begin{definition}
A \emph{locally compact quantum group} (LCQG) $\G = (M, \Delta_{\G}, \varphi, \psi)$ consists of:
\begin{enumerate}
    \item a von Neumann algebra $M$;
    \item a coproduct $\Delta_{\G}$, i.e., a unital normal $*$-homomorphism from $M$ to $M \tens M$ satisfying the coassociativity condition
    \[
    (\Delta_{\G} \tens \id) \Delta_{\G} = (\id \tens \Delta_{\G}) \Delta_{\G};
    \]
    \item two n.s.f. weights $\varphi$ and $\psi$, called the left and right Haar weights respectively, which are invariant in the sense that
    \begin{align*}
        \varphi\bigl( (\omega \tens \id)(\Delta_{\G}(x)) \bigr) &= \varphi(x) \omega(1) \quad \text{for all } \omega \in M_{*}^{+},\; x \in \mathfrak{M}_{\varphi}^{+},\\
        \psi\bigl( (\id \tens \omega)(\Delta_{\G}(x)) \bigr) &= \psi(x) \omega(1) \quad \text{for all } \omega \in M_{*}^{+},\; x \in \mathfrak{M}_{\varphi}^{+}.
    \end{align*}
\end{enumerate}
Here $1$ is the unit of $M$, and we will use $1_\G$ instead of $1$ in subsequent sections for the sake of further clarification. The Haar weights are unique up to a positive scalar multiple.
\end{definition}

For every LCQG $\G$, there exists a left multiplicative unitary $W^{\G}$ on $H_\varphi \tens H_\varphi$ and a right multiplicative unitary $V^{\G}$ on $H_\psi \tens H_\psi$, defined by
\begin{align} 
W^{\G*} \bigl( \Lambda_{\varphi}(x) \tens \Lambda_{\varphi}(y) \bigr) &= (\Lambda_{\varphi} \tens \Lambda_{\varphi})\bigl( \Delta_{\G}(y) (x \tens 1) \bigr), && x,y \in \mathfrak{N}_{\varphi}, \notag\\
V^{\G} \bigl( \Lambda_{\psi}(x) \tens \Lambda_{\psi}(y) \bigr) &= (\Lambda_{\psi} \tens \Lambda_{\psi})\bigl( \Delta_{\G}(x) (1 \tens y) \bigr), && x,y \in \mathfrak{N}_{\psi}. \label{def:right multiplicative unitary}
\end{align}
These unitaries implement the coproduct via
\[
\Delta_{\G}(x) = W^{\G*} (1 \tens x) W^{\G} = V^{\G} (x \tens 1) V^{\G*}, \qquad x \in M.
\]
Both $W^{\G}$ and $V^{\G}$ satisfy the pentagon equation of Baaj and Skandalis; for $W^{\G}$ it reads
\[
W_{12}^{\G} W_{13}^{\G} W_{23}^{\G} = W_{23}^{\G} W_{12}^{\G},
\]
where $W_{ij}^{\G}$ is the usual leg-numbering notion (see e.g. \cite[Section 1.1]{vaes2001locally}).
The antipode $S_{\G}$ of $\G$ is the unique $\sigma$-strong$^*$ closed linear operator on $M$ satisfying
\[
S_{\G}\bigl( (\id \tens \omega)(W^{\G}) \bigr) = (\id \tens \omega)(W^{\G*}), \qquad \omega \in M_*.
\]

The dual locally compact quantum group $\dualG$ of $\G$ is defined as follows. Its von Neumann algebra is
\[
\widehat{M} = \overline{\{ (\omega \tens \id)(W^{\G}) : \omega \in \Bop(H_{\varphi})_{*} \}}^{w*} .
\]
The comultiplication $\Delta_{\dualG}$ is given by
\[
\Delta_{\dualG}(x) = \Sigma W^{\G} (x \tens 1) W^{\G*} \Sigma, \qquad x \in \widehat{M},
\]
where $\Sigma$ is the flip on $H_{\varphi} \tens H_{\varphi}$. Then $W^{\dualG} = \Sigma W^{\G*} \Sigma$ is the multiplicative unitary for $\dualG$. For $\omega \in M_*$, we use the standard notion $\lambda_{\G}(\omega) = (\omega \tens \id)(W^{\G}).$ 

Consider the set
\[
\mathcal{I}= \{ \omega \in M_* : \exists\, L \ge 0 \text{ such that } | \omega (x^*) | \le L \Vert \Lambda_{\varphi}(x) \Vert \ \text{for all } x \in \mathfrak{N}_{\varphi} \}.
\]
For every $\omega \in \mathcal{I}$, there exists a unique vector $\xi(\omega) \in \Lp{2}{\G}$ such that
\[
\omega (x^*) = \langle \xi(\omega) | \Lambda_{\varphi}(x) \rangle, \qquad x \in \mathfrak{N}_{\varphi}.
\]
The dual left Haar weight $\widehat{\varphi}$ is the unique n.s.f. weight on $\widehat{M}$ determined by the GNS triple $( H_{\varphi}, \iota , \Lambda_{\widehat{\varphi}} )$ such that  $\lambda_{\G} (\mathcal{I})$ is a $\sigma$-strong-$*$/norm core for $\Lambda_{\widehat{\varphi}}$ and $\Lambda_{\widehat{\varphi}} (\lambda_\G(\omega)) = \xi (\omega), \omega \in \mathcal{I}.$ The dual right Haar weight $\widehat{\psi}$ can be defined in a similar way.

The predual $M_*$ becomes a Banach algebra under the convolution product given in Definition~\ref{def:convolution-predual} below and $\lambda_{\G}$ is an injective algebra homomorphism. It is convenient to introduce the dense subspace
\begin{align*}
M^\sharp_* &= \{ \omega \in M_* \mid \exists\, \omega^\sharp  \in M_* : \lambda_{\G}(\omega)^* = \lambda_{\G}(\omega^\sharp  ) \} \\
&= \{ \omega \in M_* \mid \exists\, \omega^\sharp   \in M_*,\; \forall x \in \operatorname{Dom}(S_{\G}) : \overline{\omega}(S_{\G}(x)) = \omega^\sharp  (x) \}.
\end{align*}
The map $\omega \mapsto \omega^\sharp  $ is a well-defined antilinear involution on $M^\sharp_*$, turning it into a $*$-algebra. The same construction yields $\widehat{M}^\sharp_*$ for $\dualG$.

The reduced quantum group $C^{*}$-algebra of $\G$ is
\[
C_{0}(\G) = \overline{ \{ (\id \tens \omega)(W^{\G}) : \omega \in \Bop(H_{\varphi})_{*}\}}^{\Vert \cdot \Vert}.
\]

In this paper, we focus on \emph{unimodular Kac algebras}, i.e., LCQGs $\G = (M, \Delta_{\G}, \varphi, \psi)$ for which $\varphi = \psi$ is tracial. 
By \cite[Proposition 6.1.4]{Kac}, the dual $\dualG$ is again a unimodular Kac algebra. 
For Kac-type quantum groups, the antipode $S_{\G}$ coincides with the unitary antipode $R_{\G}$, and one has $M^\sharp_* = M_*$.

For $1 \le p <\infty$, let $\Lp{p}{\G}$ denote the noncommutative $L^p$-space associated with the tracial Haar weight $\varphi$; it is obtained as the closure of $\mathfrak{M}_{\varphi}$ under the norm $\|x\|_p := \varphi(|x|^p)^{1/p}$ (see \cite{Takesaki} for details). 
In the sequel, we denote $M$ by $\Lp{\infty}{\G}$, unless otherwise stated. 
When $p = 2$, the space $\Lp{2}{\G}$ is a Hilbert space that can be canonically identified with $H_{\varphi}$. The quadruple $\{ M, \Lp{2}{\G}, \Lp{2}{\G}_+, J_{\varphi} \}$ is the standard form of $M$, where the modular conjugation $J_{\varphi}$ satisfies $J_{\varphi} \Lambda_{\varphi}(x) = \Lambda_{\varphi}(x^*)$ for $x \in \mathfrak{N}_{\varphi}$.

The trace $\varphi$ extends uniquely to a positive linear functional on $\Lp{1}{\G}$. For $x \in \Lp{1}{\G}$ define $\varphi_x(y) := \varphi(xy)$ ($y \in M$). Then the map
\[
j : \Lp{1}{\G} \to M_*, \quad x \mapsto \varphi_x
\]
is an isometric isomorphism. By \cite[Proposition 3.1]{Zhn2023}, we have $\Lp{1}{\G}\cap \Lp{2}{\G}=j^{-1}(\mathcal{I}).$
Throughout the paper, unless stated otherwise, we will denote by $\G$ a unimodular Kac algebra with trace $\varphi$, and by $\dualG$ its dual with trace $\widehat{\varphi}.$

\begin{example}[Locally compact unimodular group]\label{ex:lca-group}
Let $G$ be a locally compact unimodular group with a fixed Haar measure $\mu$. Then 
$\G = (\Lp{\infty}{G}, \Delta, \varphi, \varphi)$ is a commutative unimodular Kac algebra, where
\[
\Delta(f)(x,y) = f(xy) \; (x,y \in G), \qquad \varphi(f) = \int_G f(x) \, \mathrm{d}\mu(x).
\]
The multiplicative unitary $W$ acts on $\Lp{2}{G} \tens \Lp{2}{G}$ as $(WF)(x,y) = F(x, x^{-1}y)$, and the antipode is given by $S(f)(x) = f(x^{-1})$ for $f \in \Lp{\infty}{G}$.  

The group von Neumann algebra $\mathcal{L}(G)$ is generated by the left regular representation:
\[
\mathcal{L}(G) = \overline{\{ \lambda_f \mid \lambda_f (g)(t) = \int_G f(x) g(x^{-1}t) \, \mathrm{d}\mu,\; g \in C_c(G,\mu) \}}^{w^*}.
\]
Its coproduct is $\widehat{\Delta}(\lambda_x) = \lambda_x \tens \lambda_x$ ($x \in G$), the antipode is $\widehat{S}(\lambda_f) = \lambda_{\widehat{S}(f)}$ with $\widehat{S}(f)(x) = f(x^{-1})$, and the Haar weight is $\widehat{\varphi}(f) = f(e)$ (where $e$ is the identity of $G$). The cocommutative quantum group $\dualG = (\mathcal{L}(G), \widehat{\Delta}, \widehat{\varphi}, \widehat{\varphi})$ is the dual of $\G$.
\end{example}

\begin{example}[Compact quantum group of Kac type]\label{ex:compact-kac}
A LCQG $\G$ is called a \emph{compact quantum group} (CQG) if $C_0(\G)$ is unital; in this case we write $C(\G) := C_0(\G)$. Every CQG admits a unique Haar state $h$ satisfying
\[
(h \tens \id) \circ \Delta(x) = h(x) \mathbf{1} = (\id \tens h) \circ \Delta(x), \qquad x \in C(\G).
\]

A (non-degenerate, unitary) representation of a compact quantum group $\G$ on a Hilbert space $H_U$ is an invertible (respectively unitary) element $U \in C(\G) \tens \Bop(H_U)$ such that $U_{13} U_{23} = (\Delta \tens \id)(U)$. Denote by $\Irr(\G)$ the set of equivalence classes of irreducible finite-dimensional unitary representations of $\G$ (for CQGs all irreducible representations are finite-dimensional). For each $\pi \in \Irr(\G)$, we fix a representative $u^{(\pi)} \in C(\G) \tens \Bop(H_{\pi})$ and write $n_\pi = \dim H_\pi$.

If the Haar state $h$ is tracial, then $\G$ is a unimodular Kac algebra. In this case, one has the orthogonality relations
\[
h\bigl( u_{ij}^{(\pi)} (u_{lm}^{(\pi')})^* \bigr) = \frac{\delta_{\pi \pi'} \delta_{il} \delta_{mj}}{n_{\pi}}, \qquad 
h\bigl( (u_{ij}^{(\pi)})^* u_{lm}^{(\pi')} \bigr) = \frac{\delta_{\pi \pi'} \delta_{jm} \delta_{li}}{n_{\pi}},
\]
where $\pi, \pi' \in \Irr(\G)$, $1 \le i,j \le n_\pi$, $1 \le l,m \le n_{\pi'}$.  

The dual quantum group $\dualG$ is described by its ``function algebras''
\[
c_0(\dualG) = \bigoplus_{\pi \in \Irr(\G)}^{c_0} \Bop(H_{\pi}), \qquad 
\ell^{\infty}(\dualG) = \bigoplus_{\pi \in \Irr(\G)} \Bop(H_{\pi}),
\]
where the direct sums are taken in the $c_0$- and $\ell^{\infty}$-sense, respectively. The left Haar weight $\widehat{h}$ of $\dualG$ is
\begin{equation} \label{def:left Haar weight of dual CQG}
    \widehat{h}(x) = \sum_{\pi \in \Irr(\G)} n_\pi \Tr(P_\pi x), \qquad x \in \ell^{\infty}(\dualG),
\end{equation}
with $\Tr$ being the usual trace on $\Bop(H_\pi)$ and $P_\pi$ the projection onto $H_\pi$.

Examples of compact quantum groups of Kac type are the free orthogonal quantum groups $O_N^+$ and the free permutation quantum groups $S_N^+$; they are neither commutative nor cocommutative. For more details about compact quantum groups and examples, the readers may refer to \cite{Wo1995,freep, symmetry}.
\end{example}
\subsection{Convolution and projective (co)representation}
\medskip

The correspondence between a locally compact quantum group $\G$ and its dual $\dualG$ extends Pontryagin duality to the quantum setting. In this framework, one can define a Fourier transform on quantum groups.  For unimodular Kac algebras, the definition of the $L^p$-Fourier transform simplifies considerably, compared with the general case treated in \cite{caspers2013p,Hausdorff}.

\begin{definition}[Fourier transform on unimodular Kac algebras]\label{def:fourier}
Let $\G$ be a unimodular Kac algebra. For $1 \le p \le 2$ with $\frac{1}{p} + \frac{1}{q} = 1$, the \emph{Fourier transform} $\cFp{p} : \Lp{p}{\G} \to \Lp{q}{\dualG}$ is the (unique) norm-continuous extension of the map
\[
x \mapsto \lambda_{\G}(\varphi_x), \qquad x \in \mathfrak{M}_{\varphi},
\]
where $\varphi_x(y) = \varphi(xy)$ for $y \in \Lp{\infty}{\G}$.
\end{definition}

On the dual quantum group $\dualG$, one defines analogously the Fourier transform 
$\widehat{\cF}_p : \Lp{p}{\dualG} \to \Lp{q}{\G}$ by 
$\widehat{\cF}_p(x) = \lambda_{\dualG}(\widehat{\varphi}_x)$ for $x \in \mathfrak{M}_{\widehat{\varphi}}$.
According to \cite[Theorem 5.2(1), Corollary 5.5]{caspers2013p}, the map $\widehat{\cF} := \widehat{\cF}_2$ is unique and is the inverse of $\cF:=\cFp{2}.$

For $x \in \Lp{1}{\G}$ we denote $$x^\sharp := j^{-1}(\varphi_x^\sharp  ),$$ where $\varphi_x^\sharp$ is the involution on $M_*$ introduced in Subsection~2.1. 
Then $$(\cFp{1} x)^* = \cFp{1}(x^{\sharp  }).$$
When $x \in \mathfrak{M}_{\varphi}$, one has
\[
\varphi_x^\sharp (y) = \overline{\varphi_x(S_{\G}(y^*))} = \varphi\bigl(S_{\G}(x^*)y\bigr),
\]
so that $$x^\sharp= S_{\G}(x^*) \in \mathfrak{M}_{\varphi}.$$

We now recall the convolution product on quantum groups.

\begin{definition}[Convolution of functionals]\label{def:convolution-predual}
Let $\G = (\Lp{\infty}{\G}, \Delta_{\G}, \varphi, \psi)$ be a locally compact quantum group. For $\omega, \theta \in \Lp{\infty}{\G}_*$, their \emph{convolution} $\omega \conv\theta \in \Lp{\infty}{\G}_*$ is defined by
\[
(\omega\conv \theta)(x) = (\omega \tens \theta)\bigl( \Delta_{\G}(x) \bigr), \qquad x \in \Lp{\infty}{\G}.
\]
\end{definition}

For a unimodular Kac algebra the convolution of elements in $\Lp{1}{\G}$ can be expressed in a more concrete form.

\begin{definition}[Convolution of $L^1$-elements]\label{def:convolution-L1}
Let $\G$ be a unimodular Kac algebra. For any $a, c \in \mathfrak{M}_{\varphi}$, their \emph{convolution product} $a \conv c$ is defined by
\[
a \conv c = j^{-1}(\varphi_a \conv\varphi_c)  \in \mathfrak{M}_{\varphi}.
\]
which is equivalent to saying that $(\varphi_a\conv \varphi_c)(x^*) = \varphi\bigl(x^*(a \conv c)\bigr) = \bigl\langle \Lambda_{\varphi}(a \conv c) \bigm| \Lambda_{\varphi}(x) \bigr\rangle$ for any $x\in\mathfrak{N}_{\varphi}$.
\end{definition}

\begin{remark}\label{rem:convolution-properties}
For $x \in \mathfrak{N}_{\varphi}$ and $a, c \in \mathfrak{M}_{\varphi}$ we have $\varphi_a, \varphi_c, \varphi_a \conv\varphi_c \in \mathcal{I}$. Combining Definitions~\ref{def:fourier} and~\ref{def:convolution-L1} yields
\[
\cF(a \conv c) = \lambda_{\G}(\varphi_a \conv\varphi_c) = \cF(a) \cF(c).
\]
The last equality holds because $\lambda_\G$ is multiplicative. 
Since $\xi(\varphi_a \conv\varphi_c) = \lambda_{\G}(\varphi_a) \xi(\varphi_c)$ (see \cite[Proposition 1.11.5]{vaes2001locally}), it follows that $\Lambda_{\varphi}(a \conv c) = \lambda_{\G}(\varphi_a) \Lambda_{\varphi}(c)$. Consequently, for $a \in \Lp{1}{\G}$ and $c \in \Lp{2}{\G}$,
\[
a \conv c = \lambda_{\G}(\varphi_a)(c) = \cFp{1}(a)(c) \in \Lp{2}{\G}.
\]
Moreover, for any $a,c\in  \mathfrak{M}_{\varphi}$,  one has
\begin{equation}\label{eq:conv-equiv-express}
a \conv c = (\varphi \tens \id)\Bigl( \bigl[ (S_{\G} \tens \id)(\Delta_{\G}(c)) \bigr] (a \tens 1_{\G}) \Bigr).
\end{equation}
(cf, e.g. \cite[Proposition 3.11]{kahng2010fourier}).
\end{remark}

A \emph{unitary $2$-cocycle} on a locally compact quantum group $\G$ is a unitary element $\Omega \in \Lp{\infty}{\G} \tens \Lp{\infty}{\G}$ that satisfies the cocycle identity
\begin{equation}\label{eq:cocycle}
(\Omega \tens 1_{\G})(\Delta_{\G} \tens \id)(\Omega) = (1_{\G} \tens \Omega)(\id \tens \Delta_{\G})(\Omega).
\end{equation}
Given such a cocycle, one can define a new coproduct $\Delta_{\G_{\Omega}}$ on $\Lp{\infty}{\G}$ by
\[
\Delta_{\G_{\Omega}}(x) = \Omega \Delta_{\G}(x) \Omega^*, \qquad x \in \Lp{\infty}{\G}.
\]
The pair $\G_{\Omega} = (\Lp{\infty}{\G}, \Delta_{\G_{\Omega}})$ is again a locally compact quantum group, called the \emph{$\Omega$-twisted} locally compact quantum group, and its multiplicative unitary is denoted by $W^{\G_{\Omega}}$. When $\G$ is a classical LCA group, the twisted quantum group of $\Lp{\infty}{G}$ is isomorphic to the original one. For detailed accounts, we refer to \cite{De phd}.

In the present paper, we are primarily concerned with projective (co)representations associated to a 2-cocycle. By \cite[Proposition 9.1.2]{De phd} and the notation after \cite[Lemma 7.3.1]{De phd}, the following definition, which is a special case of \cite[Definition 10.1.1]{De phd}, suffices for our purposes.

\begin{definition}[$\Omega$-projective corepresentation]\label{def:projective-corep}
Let $\G$ be a locally compact quantum group and $\Omega$ a 2-cocycle on $\G$. A \emph{(unitary) $\Omega$-projective left corepresentation} on a Hilbert space $H$ is a unitary element $u \in \Lp{\infty}{\G} \tens \Bop(H)$ satisfying
\begin{equation}\label{eq:projective-corep}
(\Omega \Delta_{\G} \tens \id)(u) = u_{13} u_{23}.
\end{equation}
\end{definition}

Such a corepresentation $u$ is called \emph{square-integrable} if the associated left coaction
\begin{equation}\label{eq:int-act}
 \alpha : \Bop(H) \to \Lp{\infty}{\G} \tens \Bop(H), \quad x \mapsto u^*(1_{\G} \tens x) u   
\end{equation}
is integrable, i.e., if $\mathfrak{M}_{\varphi \tens \id} \cap \alpha(\Bop(H))$ is $\sigma$-weakly dense in $\alpha(\Bop(H))$, see \cite[Definition 10.2.2]{De phd}.

An \emph{intertwiner} between two $\Omega$-projective left corepresentations $u_1$ and $u_2$ on Hilbert spaces $H_1$ and $H_2$, respectively, is an operator $x : H_2 \to H_1$ such that $u_1(1_{\G} \tens x) = (1_{\G} \tens x) u_2$. A corepresentation is \emph{irreducible} if its self-intertwiners are precisely the scalar multiples of the identity \cite[Page 303]{De phd}.

According to \cite[Proposition 2.4]{Tuset deformation},  for any unitary 2-cocycle $\Omega$ on $\G$,  the element $\Omega^*$ is a unitary 2-cocycle on $\G_{\Omega}$.  
By \cite[Equation 2.1 and 2.5]{Tuset deformation} and \cite[Section 10.1]{De phd}, we can see that, the operator $W^{\G} \Omega^*$, called the \emph{unitary $\Omega^{*}$-projective left regular corepresentation} of $\G_{\Omega}$, is a unitary $\Omega^{*}$-projective left corepresentation of $\G_{\Omega}$ on $\Lp{2}{\G}$, while $W^{\G_{\Omega}} \Omega$, called the \emph{unitary $\Omega$-projective left regular corepresentation} of $\G_{\Omega}$, is a unitary $\Omega$-projective left corepresentation of $\G$ on $\Lp{2}{\G}$.

As said in \cite[Lemma 7.2.6 and Section 11.2]{De phd} and \cite[Section 2.1]{Tuset deformation}, we know that the norm closure of the space of operators $(\omega \tens \id)(W^{\G} \Omega^*)$, $\omega \in \Bop(\Lp{2}{\G})_*$, is called \emph{reduced twisted group $C^{*}$-algebra} denoted by $C_r^{*}(\G; \Omega)$, and the von Neumann algebra $C_r^{*}(\G; \Omega)''\subset\Bop(\Lp{2}{\G})$ is denoted by $W^{*}(\G; \Omega)$ which is exactly the Galois object, namely the twisting by 2-cocycle in \cite[Section 9.1]{De phd}. 
Similarly, for the conjugate 2-cocycle $\Omega^{*}$, by using another unitary $W^{\G_{\Omega}} \Omega$, we also obtain the reduced twisted group $C^{*}$-algebra $C_r^{*}(\G_{\Omega}; \Omega^{*})$ and the Galois object $W^{*}(\G_{\Omega}; \Omega^{*})$ of $\G_{\Omega}$.

\begin{remark}\label{rem:sqr-int-galoismap}
Since  $W^{*}(\G; \Omega)$ is the Galois object of $\G$ (i.e., the twisting by 2-cocycle $\Omega$),  the unitary $W^{\G} \Omega^*$ is the Galois map, the associated action defined as in \eqref{eq:int-act} is semidual and hence integrable. 
Then $W^{\G} \Omega^*$, as the unitary $\Omega^{*}$-projective left corepresentation of $\G_{\Omega}$, is square-integrable.
Similarly, the unitary $\Omega$-projective left corepresentation $W^{\G_{\Omega}} \Omega$ of $\G$ is also square-integrable. 
For more details, the reader may refer to \cite[Section 9.1, Example 7.1.2 and Section 7.3]{De phd} and \cite[proposition 2.7.12]{vaes2001locally}.
\end{remark}

We define the \emph{$\Omega$-twisted convolution} of functionals $\omega, \mu \in \Lp{\infty}{\G}_*$ by
\[
(\omega \twconv \mu)(x) := (\omega \tens \mu)\bigl( \Delta_{\G}(x) \Omega^* \bigr), \qquad x \in \Lp{\infty}{\G}.
\]
It is straightforward to verify that $\omega \twconv \mu$ again belongs to $\Lp{\infty}{\G}_*$. 
As explained in \cite[Section 11.3]{De phd}, there exists a compatible involution $\sharp_{\Omega}$ on a suitable subspace of $\Lp{\infty}{\G}_*$; the universal $C^{*}$-algebraic envelope of $\Lp{\infty}{\G}_*^{\sharp_{\Omega}}$ under the twisted convolution and this involution is called the \emph{universal twisted group $C^{*}$-algebra} of $\G$ and denoted by $C_u^{*}(\G; \Omega)$. 
For the twisted quantum group $\G_{\Omega}$, replacing $\Omega$ by $\Omega^{*}$, we can directly obtain the $\Omega^*$-twisted convolution on $\Lp{\infty}{\G_{\Omega}}_* = \Lp{\infty}{\G}_*$ as follows:
 $$(\omega \twconvs \mu)(x) = (\omega \tens \mu)( \Delta_{\G_{\Omega}}(x) (\Omega^{*})^{*})= (\omega \tens \mu)(\Omega\Delta_{\G}(x)), $$
 which induces the universal twisted group $C^{*}$-algebra $C_u^{*}(\G_{\Omega}; \Omega^{*})$ of $\G_{\Omega}$.

As a special case of \cite[Definition 10.2.1]{De phd}, the notion of projective representation on the dual side is introduced as follows.

\begin{definition}[$\Omega$-projective representation of the dual]\label{def:projective-rep-dual}
Let $\G$ be a locally compact quantum group and $\Omega$ a 2-cocycle on $\G$. A \emph{continuous $\Omega$-projective left representation} of $\dualG$ is a non-degenerate $*$-representation of the universal twisted group $C^{*}$-algebra $C_u^{*}(\G_{\Omega}; \Omega^{*})$ on some Hilbert space.
\end{definition}

\begin{remark}\label{rem:corep-rep-correspondence}
By \cite[Proposition 10.2.3(1) and Proposition 11.3.8]{De phd}, there is a one-to-one correspondence between (irreducible) $\Omega$-projective corepresentations of $\G$ and (irreducible) continuous  $\Omega$-projective representations of $\dualG$. More precisely, if $v$ is a $\Omega$-projective left corepresentation of $\G$ on some Hilbert space $H$, then the corresponding continuous $\Omega$-projective left representations of $\dualG$ is given by
\[
\pi_v : \Lp{\infty}{\G}_* \to \Bop(H), \quad \omega \mapsto (\omega \tens \id)(v),
\]
satisifying
\begin{equation}\label {prop:proj-corep-homomorphism}
\pi_v(\omega) \pi_v(\mu) = \pi_v(\omega \twconvs \mu), \qquad \omega, \mu \in \Lp{\infty}{\G}_*.
\end{equation}

\end{remark}

Set
\[
\lambda^{\Omega}_{\G}(\omega) := (\omega \tens \id)(W^{\G} \Omega^*), \quad \omega \in \Bop(\Lp{2}{\G})_*,
\]
which is a continuous $\Omega^{*}$-projective left representation of $\widehat{\G_{\Omega}}$.
We call $\lambda^{\Omega}_{\G}$ the \emph{continuous $\Omega^{*}$-projective left regular representation} of $\widehat{\G_{\Omega}}$.

\begin{remark}
It is worth noticing that, in case $\G = \Lp{\infty}{G}$ for an ordinary locally compact group $G$ with an ordinary 2-cocycle $\Omega$, the $\Omega$-twisted convolution $\twconv$ is in fact the ordinary twisted convolution by the complex conjugate 2-cocycle $\bar\Omega$, while $\twconvs$ is the one by the original 2-cocycle $\Omega$. Hence, in the classical case, the twisted group algebra $C_r^{*}(\G; \Omega)$, $C_u^{*}(\G; \Omega)$ and $W^{*}(\G; \Omega)$ (resp. $C_r^{*}(\G_{\Omega}; \Omega^{*})$, $C_u^{*}(\G_{\Omega}; \Omega^{*})$ and $W^{*}(\G_{\Omega}; \Omega^{*})$) are the ordinary twisted group algebra $C_r^{*}(G)_{\bar\Omega}$, $C_u^{*}(G)_{\bar\Omega}$ and $L(G)_{\bar\Omega}$ (resp. $C_r^{*}(G)_{\Omega}$, $C_u^{*}(G)_{\Omega}$ and $L(G)_{\Omega}$), respectively.
Here the corepresentation $W^{\G} \Omega^*$ is just the ordinary $\bar\Omega$-projective left regular representation $\lambda_{\bar\Omega}$ of $G$ on $\Lp{2}{G}$, and then $\lambda^{\Omega}_{\G}$ is the classical $*$-lift of $\lambda_{\bar\Omega}$  to $C_u^{*}(G)_{\bar\Omega}$,
but $W^{\G_{\Omega}} \Omega$ exactly goes back to the ordinary $\Omega$-projective left regular representation $\lambda_\Omega$ of $G$ on $\Lp{2}{G}$, and $(\omega \tens \id)(W^{\G_\Omega} \Omega)$ is the $*$-lift of $\lambda_\Omega$  to $C_u^{*}(G)_{\Omega}$.
However, since it is well-known that $C_r^{*}(\G; \Omega)$ is $*$-anti-isomorphic to $C_r^{*}(\G_{\Omega}; \Omega^{*})$ (see \cite[Proposition 2.4]{Tuset deformation}), the above difference has no effect on our subsequent discussions.
\end{remark}

The next proposition relates this continuous projective left regular representation to twisted convolution.

\begin{proposition}\label{prop:twisted-convolution-regular}
Let $\G$ be a locally compact quantum group and $\Omega$ a $2$-cocycle on $\G$. For every $\omega \in \Lp{\infty}{\G}_*$ and $\mu \in \mathcal{I}$, the twisted convolution $\omega \twconv \mu$ belongs to $\mathcal{I}$ and
\[
\xi(\omega \twconv \mu) = \lambda^{\Omega}_{\G}(\omega) \, \xi(\mu).
\]
\end{proposition}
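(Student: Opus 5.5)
The plan is to imitate the proof of the untwisted statement $\xi(\omega\conv\mu)=\lambda_{\G}(\omega)\xi(\mu)$ from \cite[Proposition 1.11.5]{vaes2001locally}, inserting the cocycle at the point where the multiplicative unitary implements the coproduct. We must show two things: $\omega\twconv\mu\in\mathcal{I}$, and $(\omega\twconv\mu)(x^*)=\langle \lambda^{\Omega}_{\G}(\omega)\xi(\mu)\,|\,\Lambda_\varphi(x)\rangle$ for all $x\in\mathfrak{N}_\varphi$. The second identity gives the first automatically, since then
\[
|(\omega\twconv\mu)(x^*)|\le \|\lambda^{\Omega}_{\G}(\omega)\|\,\|\xi(\mu)\|\,\|\Lambda_\varphi(x)\|.
\]
Its uniqueness then yields $\xi(\omega\twconv\mu)=\lambda^{\Omega}_{\G}(\omega)\xi(\mu)$.

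\textbf{Step 1: reduction to vector functionals.} Both sides are linear and norm-continuous in $\omega$: the right side because $\|\lambda^{\Omega}_{\G}(\omega)\|\le\|\omega\|$. The functionals $\omega_{\eta,\zeta}$, with $\eta,\zeta\in\Lp{2}{\G}$, span a norm-dense subspace of $\Lp{\infty}{\G}_*$ since $\Lp{\infty}{\G}$ is in standard form. So it suffices to treat $\omega=\omega_{\eta,\zeta}$.

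\textbf{Step 2: the key computation.} Using $\Delta_{\G}(x^*)=W^{\G*}(1\tens x^*)W^{\G}$, we get $\Delta_{\G}(x^*)\Omega^*=W^{\G*}(1\tens x^*)W^{\G}\Omega^*$, and hence
\[
(\omega_{\eta,\zeta}\twconv\mu)(x^*)=(\id\tens\mu)\bigl((\omega_{\eta,\zeta}\tens\id)(W^{\G*}(1\tens x^*)W^{\G}\Omega^*)\bigr).
\]
We expand with an orthonormal basis $(e_i)$ of $\Lp{2}{\G}$ in the first leg:
\[
(\omega_{\eta,\zeta}\tens\id)\bigl(W^{\G*}(1\tens x^*)\,W^{\G}\Omega^*\bigr)=\sum_i (\omega_{e_i,\zeta}\tens\id)(W^{\G*})\,x^*\,(\omega_{\eta,e_i}\tens\id)(W^{\G}\Omega^*).
\]
The first factor is $(\omega_{e_i,\zeta}\tens\id)(W^{\G*})=\lambda_{\G}(\omega_{\zeta,e_i})^*$. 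The third factor, $b_i:=(\omega_{\eta,e_i}\tens\id)(W^{\G}\Omega^*)$, lies in $\Lp{\infty}{\G}$, because the second leg of $W^{\G}$ lies in $\Lp{\infty}{\G}$ and so does the second leg of $\Omega^*$.

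The tempting next move is to rewrite $\mu(a^*x^*b)$ as $\langle b\,\xi(\mu)|\Lambda_\varphi(xa)\rangle$. This does not work: $\mathcal{I}$ is not a bimodule over $\Lp{\infty}{\G}$ in general, so we cannot move multipliers through $\mu$ that way. I would instead evaluate the first leg with the right slice. Set $T=W^{\G}\Omega^*$ and use only that $T\in\Bop(\Lp{2}{\G})\bar\tens\Lp{\infty}{\G}$. The defining formula for $W^{\G*}$ gives
\[
(\omega\twconv\mu)(x^*)=\bigl\langle T(\eta\tens\xi(\mu))\,\big|\,W^{\G}(\zeta\tens\Lambda_\varphi(x))\bigr\rangle .
\]
This identity is the untwisted Kustermans--Vaes computation with $W^{\G}$ replaced by $T$ on the right. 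To prove it, first approximate $\mu$ by $\varphi_a$ with $a$ in the Tomita algebra (the Kac-type case makes $\varphi$ a trace, so $\mathcal{I}\supseteq j(\mathfrak{M}_\varphi)$ is dense in the appropriate graph norm), and then use $\Lambda_\varphi(x^*a)=x^*\Lambda_\varphi(a)$ after right-multiplying by the second-leg element.

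Rewriting the right side via $W^{\G*}$ against $\zeta\tens\Lambda_\varphi(x)$ and slicing yields
\[
\langle(\omega_{\eta,\zeta}\tens\id)(T)\,\xi(\mu)\,|\,\Lambda_\varphi(x)\rangle=\langle\lambda^{\Omega}_{\G}(\omega)\xi(\mu)\,|\,\Lambda_\varphi(x)\rangle .
\]

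\textbf{Main obstacle.} The hardest point is justifying the identity with $T$ for general $\mu\in\mathcal{I}$, not just for $\varphi_a$. I would first prove it for $\mu=\varphi_{a}$ with $a\in\mathfrak{M}_\varphi$, where $\mu(y)=\langle y\Lambda_\varphi(a)\,|\,\Lambda_\varphi(1)\rangle$ formally; in the tracial case, $\mu(y)=\varphi(ya)$ becomes an honest inner product $\langle y\Lambda_\varphi(a_1)\,|\,\Lambda_\varphi(a_2^*)\rangle$ after factoring $a=a_1a_2$. I would then pass to general $\mu\in\mathcal{I}$ by density of $\lambda_{\G}(\mathcal{I})$-cores, that is, using that $\xi$ is closed as a map on $\mathcal{I}$ (\cite[Section 1.1]{vaes2001locally}). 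If that closure argument proves delicate, the fallback is to restrict to the Kac case, where Remark~\ref{rem:convolution-properties} already gives the needed density of $j(\mathfrak{M}_\varphi)$.
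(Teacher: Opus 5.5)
There is a genuine gap. Your displayed key identity
\[
(\omega\twconv\mu)(x^*)=\bigl\langle T(\eta\tens\xi(\mu))\bigm| W^{\G}(\zeta\tens\Lambda_\varphi(x))\bigr\rangle
\]
is false.
\begin{itemize}
\item For $\Omega=1$ (so $T=W^{\G}$), unitarity of $W^{\G}$ turns the right side into $\omega(1)\mu(x^*)$, not $(\omega\tens\mu)(\Delta_{\G}(x^*))$.
\item For general $\Omega$, moving $W^{\G}$ across the inner product gives $\langle(\omega\tens\id)(\Omega^*)\xi(\mu)\,|\,\Lambda_\varphi(x)\rangle$. That is not $\langle\lambda^{\Omega}_{\G}(\omega)\xi(\mu)\,|\,\Lambda_\varphi(x)\rangle$, so your ``rewriting via $W^{\G*}$'' step does not produce the claimed expression.
\end{itemize}
The correct target is $\langle T(\eta\tens\xi(\mu))\,|\,\zeta\tens\Lambda_\varphi(x)\rangle$. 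But that is just the proposition unwound for $\omega=\omega_{\eta,\zeta}$, so all of the content sits in the step you only sketch.

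That sketch also does not fit the statement. It relies on traciality of $\varphi$, whereas the proposition is for an arbitrary locally compact quantum group; your fallback would prove a weaker result. In addition, passing from functionals $\varphi_a$ to all of $\mathcal{I}$ needs a core property of $\xi$: nets $\mu_k\to\mu$ with $\xi(\mu_k)\to\xi(\mu)$. Closedness of $\xi$ alone is not enough.

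The missing idea is that no approximation of $\mu$ is needed, because nothing ever has to be moved through $\mu$. Move the slice onto $x$ instead: $(\omega\tens\id)(\Delta_{\G}(x^*)\Omega^*)=y^*$ with $y=(\overline{\omega}\tens\id)(\Omega\Delta_{\G}(x))$. The steps are:
\begin{itemize}
\item Left invariance gives $\Delta_{\G}(x)\in\mathfrak{N}_{\id\tens\varphi}$, with $(\id\tens\Lambda_\varphi)(\Delta_{\G}(x))\zeta=W^{\G*}(\zeta\tens\Lambda_\varphi(x))$; this is just the defining formula for $W^{\G*}$.
\item $\mathfrak{N}_{\id\tens\varphi}$ is a left ideal and $\id\tens\Lambda_\varphi$ is a left module map. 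So $\Omega\Delta_{\G}(x)\in\mathfrak{N}_{\id\tens\varphi}$.
\item Slicing with $\overline{\omega}$ then gives $y\in\mathfrak{N}_\varphi$ and $\Lambda_\varphi(y)=(\overline{\omega}\tens\id)(\Omega W^{\G*})\Lambda_\varphi(x)$. This is exactly the fact the paper quotes from De Commer's thesis.
\end{itemize}
The defining property of $\xi(\mu)$ then applies directly:
\[
(\omega\twconv\mu)(x^*)=\mu(y^*)=\bigl\langle \xi(\mu)\bigm|\Lambda_\varphi(y)\bigr\rangle=\bigl\langle (\omega\tens\id)(W^{\G}\Omega^*)\,\xi(\mu)\bigm|\Lambda_\varphi(x)\bigr\rangle .
\]
This holds for every $\mu\in\mathcal{I}$ and every locally compact quantum group, with no traciality or density argument.

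Your reduction to vector functionals is fine. So is your observation that this identity yields both membership in $\mathcal{I}$ and the formula. It is the middle of the argument that must be replaced by the step above.
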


\begin{proof}
For $x \in \mathfrak{N}_{\varphi}$, it follows from \cite[Proposition 7.3.4]{De phd} and \cite[Proposition 9.1.2]{De phd} that $(\overline{\omega} \tens \id)(\Omega \Delta_{\G}(x)) \in \mathfrak{N}_{\varphi}$ and
\[
(\overline{\omega} \tens \id)(\Omega W^{\G*}) \Lambda_{\varphi}(x) = \Lambda_{\varphi}\!\bigl((\overline{\omega} \tens \id) \Omega \Delta_{\G}(x) \bigr).
\]
Consequently, we have
\begin{align*}
(\omega \twconv \mu)(x^*) 
&= (\omega \tens \mu)\bigl( \Delta_{\G}(x^*) \Omega^* \bigr) 
= \mu\!\bigl( (\omega \tens \id) \Delta_{\G}(x^*) \Omega^* \bigr) \\
&= \mu\!\Bigl( \bigl( (\overline{\omega} \tens \id) \Omega \Delta_{\G}(x) \bigr)^* \Bigr)
= \bigl\langle \xi(\mu) \bigm| \Lambda_{\varphi}\!\bigl( (\overline{\omega} \tens \id) \Omega \Delta_{\G}(x) \bigr) \bigr\rangle \\
&= \bigl\langle \xi(\mu) \bigm| (\overline{\omega} \tens \id)(\Omega W^{\G*}) \Lambda_{\varphi}(x) \bigr\rangle \\
&= \bigl\langle (\omega \tens \id)(W^{\G} \Omega^*) \xi(\mu) \bigm| \Lambda_{\varphi}(x) \bigr\rangle.
\end{align*}
This shows that $\omega \twconv \mu \in \mathcal{I}$ and $\xi(\omega \twconv \mu) = \lambda^{\Omega}_{\G}(\omega) \xi(\mu)$.
\end{proof}

\bigskip
\section{Short-time Fourier transform}\label{sec3}
\medskip

This section introduces the fundamental objects of time-frequency analysis on unimodular Kac algebras. We define translation and modulation operators, which combine to yield a quantum analogue of the time-frequency shift operator. Using these ingredients, we define the short-time Fourier transform (STFT) on a unimodular Kac algebra $\G$ and establish its basic analytic properties: a Plancherel theorem, the Moyal identity, and an inversion formula. At the end of the section, we verify that all definitions reduce to the classical ones when $\G$ is a locally compact abelian group and the non-abelian case when $\G$ is a unimodular locally compact groups of type~I.

\begin{definition}[Translation operator]\label{def:translation}
For $a \in \Lp{1}{\G}$ and $x \in \Lp{2}{\G}$, the \emph{left translation operator} $T_a^{\G}$ on $\Lp{2}{\G}$ is defined by
\[
T_a^{\G} = \lambda_{\G}(\varphi_a) = (\varphi_a \tens \id)(W^{\G}) \in \Lp{\infty}{\dualG} \subseteq \Bop(\Lp{2}{\G}),
\]
and its action is given by
\[
T_a^{\G} x = \lambda_{\G}(\varphi_a)(x) = a \conv x.
\]
\end{definition}

\begin{definition}[Modulation operator]\label{def:modulation}
For $b \in \Lp{1}{\dualG}$ and $x \in \Lp{2}{\G}$, the \emph{left modulation operator} $E_b^{\G}$ on $\Lp{2}{\G}$ is defined by
\[
E_b^{\G} = \lambda_{\dualG}(\widehat{\varphi}_b) = (\id \tens \widehat{\varphi}_b)(W^{\G*}) \in \Lp{\infty}{\G} \subseteq \Bop(\Lp{2}{\G}),
\]
and its action is given by
\[
E_b^{\G} x = \lambda_{\dualG}(\widehat{\varphi}_b) x = \widehat{\cF}_1(b) x.
\]
\end{definition}

\begin{remark}\label{rem:unitary-equivalence}
There is a natural unitary equivalence between the modulation operator and the dual translation operator. For $b \in \Lp{1}{\dualG}$ and $x \in \Lp{2}{\dualG}$, using \cite[Theorem 6.4(4)]{caspers2013p} we obtain
\[
\cF E_b^{\G} \widehat{\cF}(x) = \cF\bigl( \widehat{\cF}_1(b) \widehat{\cF}(x) \bigr) 
= \cF \cF^{-1}(b \conv x) = b \conv x = T_b^{\dualG} x.
\]
Thus $\cF E_b^{\G} = T_b^{\dualG} \cF$. Similarly, for every $a \in \Lp{1}{\G}$ we have $E_a^{\dualG} \cF = \cF T_a^{\G}$.
\end{remark}

For $\nu \in \Lp{1}{\G} \tens \Lp{1}{\dualG}$, the map $(\varphi \tens \widehat{\varphi})_\nu \odot \id$ extends uniquely to a norm-continuous normal linear map
\[
(\varphi \tens \widehat{\varphi})_\nu \tens \id : \Lp{\infty}{\G} \tens \Lp{\infty}{\dualG} \tens \Bop(\Lp{2}{\G}) \to \Bop(\Lp{2}{\G})
\]
with $\| (\varphi \tens \widehat{\varphi})_\nu \tens \id \| = \| \nu \|_1$.  
Using the leg-numbering notation we write $(\varphi \tens \widehat{\varphi})_{\nu 13}$ for the map
$
\bigl( (\varphi \tens \widehat{\varphi})_\nu \tens \id \bigr) \circ (\id \tens \chi)$,
where $\chi$ is the flip from $\Bop(\Lp{2}{\G}) \tens \Lp{\infty}{\dualG}$ to $\Lp{\infty}{\dualG} \tens \Bop(\Lp{2}{\G})$; note that $\| (\varphi \tens \widehat{\varphi})_{\nu 13} \| = \| \nu \|_1$.

We define a map $\pi_{\G} : \Lp{1}{\G} \tens \Lp{1}{\dualG} \to \Bop(\Lp{2}{\G})$ by
\[
\pi_{\G}(\nu) = \bigl( (\varphi \tens \widehat{\varphi})_\nu \tens \id \bigr)(W_{23}^{\dualG} W_{13}^{\G})
= (\varphi \tens \widehat{\varphi})_{\nu 13}(W^{\G*}_{23} W^{\G}_{12}), \qquad \nu \in \Lp{1}{\G} \tens \Lp{1}{\dualG}.
\]
Because $\| \pi_{\G}(\nu) \| \le \| \nu \|_1  \| W_{23}^{\dualG} W_{13}^{\G} \| \le \| \nu \|_1$, the map $\pi_{\G}$ is contractive.  
For $a \in \Lp{1}{\G}$ and $b \in \Lp{1}{\dualG}$ with $\nu = a \tens b$, a straightforward calculation gives
\[
\pi_{\G}(\nu) = (\varphi_a \tens \id \tens \widehat{\varphi}_b)(W^{\G*}_{23} W^{\G}_{12})
= (\id \tens \widehat{\varphi}_b)(W^{\G*}) (\varphi_a \tens \id)(W^{\G}) = E^{\G}_b T^{\G}_a.
\]

\begin{definition}[Time-frequency shift operator]\label{def:time-freq-shift}
For $\nu \in \Lp{1}{\G} \tens \Lp{1}{\dualG}$, the operator
\[
\pi_{\G}(\nu) := \bigl( (\varphi \tens \widehat{\varphi})_\nu \tens \id \bigr)(W_{23}^{\dualG} W_{13}^{\G})
= (\varphi \tens \widehat{\varphi})_{\nu 13}(W^{\G*}_{23} W^{\G}_{12})
\]
is called the \emph{time-frequency shift operator}. Its \emph{reflected} version is defined by
\[
\pi^{re}_{\G}(\nu) := \bigl( (\varphi \tens \widehat{\varphi})_\nu \tens \id \bigr)(W_{13}^{\G} W_{23}^{\dualG})
= (\varphi \tens \widehat{\varphi})_{\nu 13}(W^{\G}_{12} W^{\G*}_{23}).
\]
\end{definition}

We now introduce the short-time Fourier transform on a unimodular Kac algebra.

\begin{definition}[Short-time Fourier transform]\label{def:STFT}
Let $x \in \Lp{2}{\G}$ and let $y \in \Lp{2}{\G}$ be a window vector. The \emph{short-time Fourier transform} of $x$ with respect to $y$ is the element of $\Lp{2}{\G} \tens \Lp{2}{\dualG}$ defined by
\[
\phi_y^{\G} x := (\id \tens \cF)((J_{\widehat{\varphi}} \tens J_{\varphi}) V^{\G} (y \tens J_{\varphi} x)).
\]
\end{definition}

The next proposition exhibits the relation between the STFT and the time-frequency shift operator, and contains a Plancherel-type identity.

\begin{theorem}[Plancherel theorem]\label{prop:STFT-basic}
Let $x \in \Lp{2}{\G}$ and let $y \in \Lp{2}{\G}$ be a window vector. Then
\[
\| \phi_y^{\G} x \|_2 = \| x \|_2 \| y \|_2.
\]
Moreover, for every $a \in \Lp{1}{\G} \cap \Lp{2}{\G}$ and $b \in \Lp{1}{\dualG} \cap \Lp{2}{\dualG}$, one has
\[
\bigl\langle \phi_y^{\G} x \bigm| a \tens b \bigr\rangle
= \bigl\langle x \bigm| E^{\G}_b \circ T^{\G}_a(y) \bigr\rangle
= \bigl\langle x \bigm| \pi_{\G}(a \tens b)(y) \bigr\rangle.
\]
\end{theorem}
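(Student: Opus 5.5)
The Plancherel identity is essentially automatic from the form of the definition. I would read $\phi_y^{\G} x$ as the image of $y \tens J_{\varphi}x$ under a product of isometric maps. First, $V^{\G}$ is unitary on $\Lp{2}{\G}\tens\Lp{2}{\G}$ (here $H_\psi=H_\varphi$ since $\varphi=\psi$). Second, $J_{\widehat{\varphi}}\tens J_{\varphi}$ is an antiunitary; the first leg $J_{\widehat\varphi}$ acts on $L^2(\dualG)$, which is identified with $\Lp{2}{\G}$ as the same GNS space. Third, $\id\tens\cF$ is unitary, because $\cF$ is unitary by \cite{caspers2013p} with inverse $\widehat{\cF}$. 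Since $J_\varphi$ is isometric, this gives
\[
\|\phi_y^{\G}x\|_2=\|y\tens J_\varphi x\|_2=\|x\|_2\|y\|_2 .
\]

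For the second identity I would work first with $x,y$ in the dense subspace $\Lambda_\varphi(\mathfrak{N}_\varphi)$. Both sides are continuous in $(x,y)$ in a sesquilinear or linear way, so the general case then follows by density. I would compute the left side step by step.

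Step 1 moves $\id\tens\cF$ to the other side of the inner product, replacing $a\tens b$ by $a\tens\widehat{\cF}(b)$. Since $b\in\Lp{1}{\dualG}\cap\Lp{2}{\dualG}$, we have $\widehat{\cF}(b)=\widehat\cF_1(b)=E_b^{\G}$ acting as an element of $\Lp{\infty}{\G}$, with vector $\Lambda_\varphi(\widehat\cF_1 b)$.

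Step 2 removes the antiunitary, using $\langle J\xi|\eta\rangle=\langle J\eta|\xi\rangle$. This turns the expression into
\[
\bigl\langle (J_{\widehat\varphi}a)\tens J_\varphi\widehat\cF(b)\bigm| V^{\G}(y\tens J_\varphi x)\bigr\rangle ,
\]
and then into $\langle V^{\G*}(\cdots)| y\tens J_\varphi x\rangle$. Here $J_{\widehat\varphi}a$ should be interpreted via the identification of $L^2(\dualG)$ with $\Lp{2}{\G}$.

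Step 3 slices with respect to the first leg. This writes the pairing as
\[
\bigl\langle J_\varphi x\bigm| (\omega\tens\id)(\cdots)\bigr\rangle ,
\]
with $\omega$ the vector functional built from $a$ and $y$. The defining formula \eqref{def:right multiplicative unitary} for $V^{\G}$ then identifies the operator as a convolution: it expresses the action through $\Delta_\G(x)(1\tens y)$, which matches formula \eqref{eq:conv-equiv-express} for $a\conv c$ after one application of $S_\G=R_\G$.

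Concretely, I expect
\[
(\omega_{y,\,J a}\tens\id)(V^{\G*})
\]
to act on $J_\varphi\Lambda_\varphi(\widehat\cF_1 b)$ as $J_\varphi$ composed with left multiplication by $\widehat\cF_1(b)$ and convolution by $a$. After the final application of $J_\varphi$ to bring $x$ back, the result should be
\[
\langle x\,|\,\widehat\cF_1(b)\,(a\conv y)\rangle=\langle x|E_b^{\G}T_a^{\G}y\rangle .
\]
The last equality with $\pi_\G(a\tens b)y$ is the computation $\pi_\G(a\tens b)=E_b^{\G}T_a^{\G}$ already recorded before Definition~\ref{def:time-freq-shift}.

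The main obstacle is Step 3. It requires correctly relating the right multiplicative unitary $V^{\G}$, conjugated by $J_{\widehat\varphi}\tens J_\varphi$, to the left unitary $W^{\G}$ that defines $T_a^{\G}$. In the Kac unimodular case I would use the standard relation $V^{\G}=(\widehat J\tens\widehat J)\,\Sigma W^{\G*}\Sigma\,(\widehat J\tens\widehat J)$, or the analogous relation with $J$, from \cite{kustermans2003locally} and \cite{vaes2001locally}. This reduces everything to $W^{\G}$. The remaining slice computation is then the identity $\Lambda_\varphi(a\conv c)=\lambda_\G(\varphi_a)\Lambda_\varphi(c)$ from Remark~\ref{rem:convolution-properties}. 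Bookkeeping of which conjugations act, and of $a$ versus $a^*$ or $S_\G(a)$, is where errors are likely. I would check the final formula against Example~\ref{ex:lca-group}, where both sides should reduce to $\int f(t)\overline{\omega(t)g(x^{-1}t)}$.
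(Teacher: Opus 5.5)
Your proposal is correct in outline. The Plancherel part is exactly the paper's: $(\id\tens\cF)(J_{\widehat{\varphi}}\tens J_{\varphi})V^{\G}(\id\tens J_{\varphi})$ is a composition of unitaries and antiunitaries.

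\textbf{The pairing identity.} The first variant of your Step~3 (definition of $V^{\G}$ plus \eqref{eq:conv-equiv-express} with $S_{\G}=R_{\G}$) is the paper's computation run backwards. The paper starts from $\langle \Lambda_{\varphi}(x)\mid E^{\G}_bT^{\G}_a\Lambda_{\varphi}(y)\rangle$ with $a,y\in\mathfrak{M}_{\varphi}$ and inserts \eqref{eq:conv-equiv-express}. It then uses traciality, converts $(S_{\G}\tens\id)$ into conjugation by $J_{\widehat{\varphi}}\tens J_{\varphi}$, and reads off $V^{\G}(\Lambda_{\varphi}(y)\tens\Lambda_{\varphi}(x^*))$. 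It never uses $W^{\G}$. In your slicing language the whole content is the identity
\[
(\omega_{y,\,J_{\widehat{\varphi}}a}\tens\id)(V^{\G}) = a\conv y \in \Lp{\infty}{\G}, \qquad a,y\in\mathfrak{M}_{\varphi}.
\]
So the slice in Step~3 is really $(\omega_{J_{\widehat{\varphi}}a,\,y}\tens\id)(V^{\G*})=(a\conv y)^*$. It sends $J_{\varphi}\Lambda_{\varphi}(\widehat{\cF}_1 b)=\Lambda_{\varphi}((\widehat{\cF}_1 b)^*)$ to $J_{\varphi}\Lambda_{\varphi}(\widehat{\cF}_1(b)(a\conv y))$, as you predicted.

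The identity follows from the definition of $V^{\G}$ and $J_{\widehat{\varphi}}\Lambda_{\varphi}(a)=\Lambda_{\varphi}(R_{\G}(a^*))$. Testing against $\Lambda_{\varphi}(p),\Lambda_{\varphi}(q)$ gives $(\varphi\tens\varphi)\bigl((R_{\G}(a)\tens q^*)\Delta_{\G}(y)(1\tens p)\bigr)$. Hence the slice is $(\varphi\tens\id)\bigl((R_{\G}(a)\tens 1)\Delta_{\G}(y)\bigr)$. This equals the right side of \eqref{eq:conv-equiv-express} by traciality of $\varphi$ and $\varphi\circ R_{\G}=\varphi$. Your sketch never names these two facts, and they are what make the step work.

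\textbf{The $W^{\G}$ detour.} Passing through $V^{\G*}=(J_{\widehat{\varphi}}\tens J_{\widehat{\varphi}})\Sigma W^{\G}\Sigma(J_{\widehat{\varphi}}\tens J_{\widehat{\varphi}})$ also works, but it costs more than ``$\Lambda_{\varphi}(a\conv c)=\lambda_{\G}(\varphi_a)\Lambda_{\varphi}(c)$''. After a first-leg slice of $W^{\G}$ you still need
\[
\varphi\bigl(R_{\G}(y)(c\conv a)\bigr)=\varphi\bigl(R_{\G}(c)(a\conv y)\bigr), \qquad c=R_{\G}(x^*\widehat{\cF}_1(b)).
\]
This is an extra cyclic identity. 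It requires $\Delta_{\G}\circ R_{\G}=\chi(R_{\G}\tens R_{\G})\Delta_{\G}$ together with traciality and $R_{\G}$-invariance.

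\textbf{Density.} Your density argument should also cover $a$, since elements of $\Lp{1}{\G}\cap\Lp{2}{\G}$ may be unbounded. Prove the identity for $a\in\mathfrak{M}_{\varphi}$ first. Then approximate $a$ simultaneously in $\|\cdot\|_1$ and $\|\cdot\|_2$, using $\|T^{\G}_a\|\le\|a\|_1$ on the right side and $L^2$-continuity on the left.
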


\begin{proof}
First take $x \in \mathfrak{N}_{\varphi}$, $y, a \in \mathfrak{M}_{\varphi}$ and $b \in \mathfrak{M}_{\widehat{\varphi}}$. 
Since $\G$ is Kac type and $\varphi$ is tracial, using \eqref{def:right multiplicative unitary}, \eqref{eq:conv-equiv-express} and \cite[Proposition 1.13.14]{vaes2001locally}, we compute
\begin{align*}
\bigl\langle \Lambda_{\varphi}(x) \bigm| E^{\G}_b \circ T^{\G}_a \Lambda_{\varphi}(y) \bigr\rangle
&= \bigl\langle \Lambda_{\varphi}(x) \bigm| \Lambda_{\varphi}((\cF^{-1} b)(a \conv y)) \bigr\rangle \\
&= \bigl\langle \Lambda_{\varphi}(x) \bigm| \Lambda_{\varphi}\bigl( (\cF^{-1} b) (\varphi \tens \id)[(S_{\G} \tens \id)\Delta_{\G}(y)](a \tens 1_{\G}) \bigr) \bigr\rangle \\
&= (\varphi \tens \varphi)\bigl( (a^* \tens 1_{\G}) [(S_{\G} \tens \id)\Delta_{\G}(y^*)] (1 \tens (\cF^{-1} b)^* x) \bigr) \\
&= (\varphi \tens \varphi)\bigl( (a^* \tens (\cF^{-1} b)^* x) [(S_{\G} \tens \id)\Delta_{\G}(y^*)] \bigr) \\
&= \bigl\langle (\Lambda_{\varphi} \tens \Lambda_{\varphi})\bigl( (1_{\G} \tens x) (S_{\G} \tens \id)\Delta_{\G}(y^*) \bigr) \bigm| \Lambda_{\varphi}(a) \tens \Lambda_{\varphi}(\cF^{-1} b) \bigr\rangle \\
&= \bigl\langle (J_{\widehat{\varphi}} \tens J_{\varphi}) (\Lambda_{\varphi} \tens \Lambda_{\varphi})\bigl( \Delta_{\G}(y) (1_{\G} \tens x^*) \bigr) \bigm| \Lambda_{\varphi}(a) \tens \Lambda_{\varphi}(\cF^{-1} b) \bigr\rangle \\
&= \bigl\langle (J_{\widehat{\varphi}} \tens J_{\varphi}) V^{\G}(\Lambda_{\varphi}(y) \tens \Lambda_{\varphi}(x^*)) \bigm| \Lambda_{\varphi}(a) \tens \Lambda_{\varphi}(\cF^{-1} b) \bigr\rangle \\
&= \bigl\langle (\id \tens \cF)(J_{\widehat{\varphi}} \tens J_{\varphi}) V^{\G} (\Lambda_{\varphi}(y) \tens J_{\varphi} \Lambda_{\varphi}(x)) \bigm| \Lambda_{\varphi}(a) \tens \Lambda_{\widehat{\varphi}}(b) \bigr\rangle \\
&= \bigl\langle \phi_{\Lambda_{\varphi}(y)}^{\G} \Lambda_{\varphi}(x) \bigm| \Lambda_{\varphi}(a) \tens \Lambda_{\widehat{\varphi}}(b) \bigr\rangle.
\end{align*}
Because $\mathfrak{M}_{\varphi}$ is a core for $\Lambda_{\varphi}$, the equality
\[
\bigl\langle \phi_y^{\G} x \bigm| a \tens b \bigr\rangle = \bigl\langle x \bigm| E^{\G}_b \circ T^{\G}_a(y) \bigr\rangle = \bigl\langle x \bigm| \pi_{\G}(a \tens b)(y) \bigr\rangle
\]
holds for all $x, y \in \Lp{2}{\G},a\in \Lp{1}{\G}\cap \Lp{2}{\G}$ and $b \in \Lp{1}{\dualG} \cap \Lp{2}{\dualG} .$  
The operator $(\id \tens \cF)(J_{\widehat{\varphi}} \tens J_{\varphi}) V^{\G} (\id \tens J_{\varphi})$ is an isometry from $\Lp{2}{\G} \tens \Lp{2}{\G}$ onto itself; consequently
\[
\| \phi_y^{\G} x \|_2 = \| x \|_2 \| y \|_2,
\]
which completes the proof.
\end{proof}
\begin{remark}\label{rmk:another def of STFT}
Actually, for $x, y \in \mathfrak{N}_{\varphi}$ the elements $1_{\G} \tens x$ and $(S_{\G} \tens \id)(\Delta_{\G}(y^{*}))$ belong to $\mathfrak{N}_{\id \tens \varphi}$ by \cite[Proposition~1.T1.3]{vaes2001locally}; consequently
$(1_{\G} \tens x) (S_{\G} \tens \id)(\Delta_{\G}(y^{*})) \in \mathfrak{M}_{\id \tens \varphi}$, and from the proof of Theorem~\ref{prop:STFT-basic} one has
\begin{equation}\label{eq:STFT-algebraic}
\phi^{\G}_{y} x = (\id \tens \cF)\bigl( (1_{\G} \tens x) (S_{\G} \tens \id) \Delta(y^{*}) \bigr) \in \mathfrak{N}_{\varphi \tens \widehat{\varphi}}.
\end{equation}
\end{remark}

\begin{corollary}[Moyal identity]\label{cor:Moyal}
For any $x_i, y_i \in \Lp{2}{\G}\;(i=1,2)$, one has
\[
\bigl\langle \phi_{y_1}^{\G} x_1 \bigm| \phi_{y_2}^{\G} x_2 \bigr\rangle
= \langle x_1 \mid x_2 \rangle \; \langle y_2 \mid y_1 \rangle.
\]
\end{corollary}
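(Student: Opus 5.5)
The identity follows from the isometry established in the proof of Theorem~\ref{prop:STFT-basic}, together with careful bookkeeping of which arguments enter linearly and which anti-linearly. Define
\[
U := (\id \tens \cF)(J_{\widehat{\varphi}} \tens J_{\varphi}) V^{\G}.
\]
Since $J_{\widehat{\varphi}}$ and $J_\varphi$ are anti-unitary and $V^{\G}$ and $\cF$ are unitary, $U$ is an anti-unitary map from $\Lp{2}{\G}\tens\Lp{2}{\G}$ onto $\Lp{2}{\G}\tens\Lp{2}{\dualG}$. By Definition~\ref{def:STFT},
\[
\phi^{\G}_{y}x = U(y\tens J_\varphi x).
\]

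First, anti-unitarity gives $\langle U\xi\mid U\eta\rangle = \overline{\langle \xi\mid\eta\rangle} = \langle \eta\mid\xi\rangle$ for all $\xi,\eta$. Applying this with $\xi = y_1\tens J_\varphi x_1$ and $\eta = y_2\tens J_\varphi x_2$ gives
\[
\bigl\langle \phi_{y_1}^{\G} x_1 \bigm| \phi_{y_2}^{\G} x_2 \bigr\rangle
= \langle y_2\mid y_1\rangle\,\langle J_\varphi x_2\mid J_\varphi x_1\rangle .
\]

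Second, anti-unitarity of $J_\varphi$ gives $\langle J_\varphi x_2\mid J_\varphi x_1\rangle = \langle x_1\mid x_2\rangle$. Combining the two steps yields exactly the claimed formula $\langle x_1\mid x_2\rangle\langle y_2\mid y_1\rangle$.

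The main point requiring care is that $U$ is genuinely anti-unitary on the whole tensor product, not merely norm-preserving on the elementary tensors $y \tens J_\varphi x$ used in Theorem~\ref{prop:STFT-basic}. Here $J_{\widehat{\varphi}}\tens J_\varphi$ is understood as the anti-unitary $\xi\tens\eta\mapsto J_{\widehat\varphi}\xi\tens J_\varphi\eta$, extended anti-linearly. Both $J_{\widehat{\varphi}}$ and $J_\varphi$ act on $\Lp{2}{\G}$ under the identification $H_{\widehat\varphi}\cong H_\varphi$, so this tensor product is well defined. A product of unitaries and one anti-unitary is anti-unitary, so $U$ preserves inner products up to complex conjugation.

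As a cross-check independent of this structural argument, the identity can also be obtained by polarization. The Plancherel identity of Theorem~\ref{prop:STFT-basic} holds for all $x,y$, and $(x,y)\mapsto\phi^{\G}_y x$ is linear in $x$ and anti-linear in $y$. One first polarizes in $x$ with $y$ fixed, then in $y$, recovering the sesquilinear identity. This confirms the order of the factors $\langle x_1\mid x_2\rangle\langle y_2\mid y_1\rangle$.
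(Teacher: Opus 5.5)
Your proposal is correct and is essentially the paper's own argument: the paper likewise drops the unitary $\id\tens\cF$, uses the anti-unitarity of $J_{\widehat{\varphi}}\tens J_{\varphi}$ to swap the two arguments, removes the unitary $V^{\G}$, and finishes with $\langle J_\varphi x_2\mid J_\varphi x_1\rangle=\langle x_1\mid x_2\rangle$. Your version packages these steps into the single anti-unitary $U$. Your polarization cross-check from the Plancherel identity, using that $\phi^{\G}_y x$ is linear in $x$ and anti-linear in $y$, is also valid but not needed.
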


\begin{proof}
Since $J_{\varphi}$ and $J_{\widehat{\varphi}}$ are anti-linear isometries, the computation in the previous proposition gives
\begin{align*}
\bigl\langle \phi_{y_1}^{\G} x_1 \bigm| \phi_{y_2}^{\G} x_2 \bigr\rangle
&= \bigl\langle (J_{\widehat{\varphi}} \tens J_{\varphi}) V^{\G} (y_1 \tens J_{\varphi} x_1) \bigm| (J_{\widehat{\varphi}} \tens J_{\varphi}) V^{\G} (y_2 \tens J_{\varphi} x_2) \bigr\rangle \\
&= \bigl\langle V^{\G} (y_2 \tens J_{\varphi} x_2) \bigm| V^{\G} (y_1 \tens J_{\varphi} x_1) \bigr\rangle \\
&= \bigl\langle J_{\varphi} x_2 \tens y_2 \bigm| J_{\varphi} x_1 \tens y_1 \bigr\rangle 
= \langle x_1 \mid x_2 \rangle \, \langle y_2 \mid y_1 \rangle.
\end{align*}
\end{proof}

When $\G$ is a compact quantum group (CQG) of Kac type, the Fourier transform and its inverse take a simpler form because the Haar weight is tracial state. %For such quantum groups the following Fourier inversion formula is known (see \cite[Proposition 3.2]{wang2016L}).

%We also need a partial inverse Fourier transform on $\Lp{\infty}{\G} \tens \ell^{\infty}(\dualG)$.

\begin{theorem}[Inversion formula for the STFT on a CQG of Kac type]\label{thm:inversion-STFT-CQG}
Let $\G$ be a compact quantum group of Kac type with tracial state $h$. Choose window vectors $y, \widetilde{y} \in \Lp{2}{\G}$ with $\langle \widetilde{y} \mid y \rangle \neq 0$. Then for every $x \in \Lp{2}{\G}$,
\begin{align*}
     x = \sum_{\pi \in \Irr(\G)} \frac{n_{\pi}}{h(y^*\widetilde{y})} 
      (h \tens \id \tens \Tr) \Bigl( 
      \bigl( ( \id \tens \id \tens P_{\pi})((\phi^{\G}_y x)_{13}) \bigr) 
        \bigl(1_{\G} \tens u^{(\pi)}\bigr)^* \bigl( (S_{\G} \tens \id)(\Delta_{\G}(\widetilde{y})) \tens \id_{H_{\pi}} \bigr) \Bigr).
\end{align*}
\end{theorem}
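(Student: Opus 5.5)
The plan is to compute the right-hand side directly, starting from the algebraic form \eqref{eq:STFT-algebraic} of Remark~\ref{rmk:another def of STFT}. First I reduce to dense subspaces: take $x,y,\widetilde y$ in the linear span of the matrix coefficients $u^{(\pi)}_{ij}$ (the Peter--Weyl algebra $\mathcal{O}(\G)$). The left side is continuous in $x$ for fixed windows. By the Moyal identity (Corollary~\ref{cor:Moyal}) and the Plancherel theorem (Theorem~\ref{prop:STFT-basic}), the right side, paired against any $z$, equals $\langle\phi_y x\mid\phi_{\widetilde y}z\rangle/\langle \widetilde y\mid y\rangle$. It is therefore also continuous, so it suffices to prove the identity on the dense set.

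Next I make the Fourier side explicit for a CQG of Kac type. For $c\in\Lp{2}{\G}$, the component $P_\pi\cF(c)=(h\tens\id)\bigl((c\tens 1)u^{(\pi)}\bigr)$, up to the convention for $W$ and possibly a $u^{(\pi)*}$ in place of $u^{(\pi)}$. The inverse transform is $c=\sum_\pi n_\pi(\id\tens\Tr)\bigl((1\tens P_\pi\widehat c)\,u^{(\pi)*}\bigr)$, which follows from the orthogonality relations in Example~\ref{ex:compact-kac} and the Haar weight \eqref{def:left Haar weight of dual CQG}. Applying this fibrewise in the first leg to $\phi_y x=(\id\tens\cF)(X)$, with $X=(1\tens x)(S\tens\id)\Delta(y^*)$, gives
\[
\sum_\pi n_\pi(\id\tens\id\tens\Tr)\bigl((\id\tens\id\tens P_\pi)((\phi_yx)_{13})(1\tens u^{(\pi)})^*\bigr)=X .
\]
Multiplying by $(S\tens\id)\Delta(\widetilde y)$ and applying $h\tens\id$, the right side of the theorem becomes
\[
\frac{1}{h(y^*\widetilde y)}(h\tens\id)\bigl((1\tens x)(S\tens\id)\Delta(y^*)\,(S\tens\id)\Delta(\widetilde y)\bigr).
\]
Exchanging the sum with $h\tens\id$ is harmless, because on $\mathcal{O}(\G)$ only finitely many $\pi$ contribute.

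Since $S$ is an antihomomorphism and $\Delta$ a homomorphism, $(S\tens\id)\Delta(y^*)\,(S\tens\id)\Delta(\widetilde y)$ should combine into $(S\tens\id)\Delta(\widetilde y y^*)$ after using traciality of $h$ in the first leg to reorder. The expected product $(S\tens\id)\Delta(y^*\widetilde y)$ has the wrong order, so some care is needed here. Then $(h\tens\id)(S\tens\id)\Delta(w)=(h\circ S\tens\id)\Delta(w)=h(w)1$ by $h\circ S=h$ and invariance. Since $h(\widetilde y y^*)=h(y^*\widetilde y)$ by traciality, the expression collapses to $x$. Alternatively, pair with $z$ and use Moyal: $\langle\phi_yx\mid\phi_{\widetilde y}z\rangle=\langle x\mid z\rangle\langle\widetilde y\mid y\rangle$, with $\langle\widetilde y\mid y\rangle=h(y^*\widetilde y)$. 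This gives the result provided I identify the summed expression paired with $z$ as $\langle\phi_y x\mid\phi_{\widetilde y} z\rangle$. I will use this Moyal route as a cross-check on the ordering.

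The main obstacle is the bookkeeping of conventions: whether $u^{(\pi)}$ or its adjoint or transpose appears in $P_\pi\cF$, matching the placement of $(1\tens u^{(\pi)})^*$ in the statement, and justifying the operator-ordering step with $S$ an antihomomorphism on the first leg under the trace. I would first verify the formula for $\G=\Lp{\infty}{G}$ with $G$ compact to fix conventions, then redo the matrix-coefficient computation via the orthogonality relations.
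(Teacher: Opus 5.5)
Your argument is correct, but its main line differs from the paper's. After recovering $X=(1_{\G}\tens x)(S_{\G}\tens\id)\Delta_{\G}(y^*)$ by fibrewise Fourier inversion, you evaluate $(h\tens\id)\bigl(X\,(S_{\G}\tens\id)\Delta_{\G}(\widetilde y)\bigr)$ directly on $\mathcal{O}(\G)$, using $h\circ S_{\G}=h$, traciality and invariance of $h$. The paper never computes this slice. It pairs with $z$ and uses four facts: unitarity of $\id\tens\cF^{-1}$, the algebraic form \eqref{eq:STFT-algebraic} of $\phi^{\G}_{\widetilde y}z$, the identity $\bigl((S_{\G}\tens\id)\Delta_{\G}(\widetilde y^*)\bigr)^*=(S_{\G}\tens\id)\Delta_{\G}(\widetilde y)$, and traciality of $h\tens h$. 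These let it recognize $\langle\phi^{\G}_y x\mid\phi^{\G}_{\widetilde y}z\rangle$, and it finishes with the Moyal identity. That is exactly your ``alternative'' route. Your route exhibits the mechanism (invariance of the Haar state) through purely Hopf-algebraic identities. The paper's weak route reuses Moyal and avoids Sweedler bookkeeping and the interchange of $\sum_{\pi}$ with $h\tens\id$.

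Two corrections. First, the ordering step is not as you describe it. The product $\sum S_{\G}(y^*_{(1)})S_{\G}(\widetilde y_{(1)})\tens y^*_{(2)}\widetilde y_{(2)}$ is in general neither $(S_{\G}\tens\id)\Delta_{\G}(\widetilde y y^*)$ nor $(S_{\G}\tens\id)\Delta_{\G}(y^*\widetilde y)$; only its slice simplifies. Using $h\bigl(S_{\G}(y^*_{(1)})S_{\G}(\widetilde y_{(1)})\bigr)=h(\widetilde y_{(1)}y^*_{(1)})=h(y^*_{(1)}\widetilde y_{(1)})$, the slice equals $(h\tens\id)\Delta_{\G}(y^*\widetilde y)=h(y^*\widetilde y)1_{\G}$. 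So the ``expected'' order is the right one once $S_{\G}$ has been removed under $h$. Second, the identification you use to justify the density reduction (the right side paired with $z$ equals $\langle\phi^{\G}_y x\mid\phi^{\G}_{\widetilde y}z\rangle/\langle\widetilde y\mid y\rangle$) does not follow from Moyal and Plancherel alone. Proving it \emph{is} the paper's computation, and once you have it, Moyal gives the theorem outright. So either your direct computation is only a cross-check, or the density step needs an independent justification.
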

\begin{proof}
Firstly, we give an expression of partial inverse Fourier transform , i.e., $\id \otimes \cF^{-1}$. Let $t \in \Lp{\infty}{\G} \tens \ell^{\infty}(\dualG)$, using (\ref{def:left Haar weight of dual CQG}), we obtain
\begin{align}
(\id \tens \cF^{-1})(t)
&= ( \id \tens \id \tens \widehat{h})\bigl( t_{13} ( 1_{\G} \tens (\oplus_{\pi \in \Irr(\G)} u^{(\pi)})^*) \bigr) \notag\\
&= \sum_{\pi \in \Irr(\G)} n_{\pi} \, (\id \tens \id \tens \Tr)
      \Bigl( \bigl( (\id \tens \id \tens P_{\pi})(t_{13})\bigr) \bigl(1_{\G} \tens u^{(\pi)}\bigr)^* \Bigr). \label{def:partial-inverse-FT}
\end{align}
Next,
for $x, y, \widetilde{y}, z \in \Lp{2}{\G}$, using Corollary~\ref{cor:Moyal} and (\ref{eq:STFT-algebraic}),  we have
\begin{align*}
\langle x \mid z \rangle 
&= \frac{1}{h(y^*\widetilde{y})} \bigl\langle \phi^{\G}_y x \bigm| \phi^{\G}_{\widetilde{y}} z \bigr\rangle
 = \frac{1}{h(y^*\widetilde{y})} \bigl\langle (\id \tens \cF^{-1}) (\phi^{\G}_y x) \bigm| (\id \tens \cF^{-1}) (\phi^{\G}_{\widetilde{y}} z) \bigr\rangle \\
&= \frac{1}{h(y^*\widetilde{y})} (h \tens h)\Bigl( \bigl( (1_{\G} \tens z) (S_{\G} \tens \id)(\Delta_{\G}(\widetilde{y}^*)) \bigr)^* \bigl((\id \tens \cF^{-1}) (\phi^{\G}_y x) \bigr) \Bigr) \\
&= \frac{1}{h(y^*\widetilde{y})} (h \tens h)\Bigl( \bigl(1_{\G} \tens z^*\bigr) \bigl((\id \tens \cF^{-1})(\phi^{\G}_y x)\bigr) \bigl( (S_{\G} \tens \id)(\Delta_{\G}(\widetilde{y})) \bigr) \Bigr) \\
&= \Bigl\langle \frac{1}{h(y^*\widetilde{y})} (h \tens \id) \Bigl(\bigl( (\id \tens \cF^{-1})(\phi^{\G}_y x) \bigr) \bigl( (S_{\G} \tens \id)(\Delta_{\G}(\widetilde{y})) \bigr) \Bigr) \Bigm| z \Bigr\rangle .
\end{align*}
Hence, we have
\[
x = \frac{1}{h(y^*\widetilde{y})} (h \tens \id) \Bigl(\bigl( (\id \tens \cF^{-1})(\phi^{\G}_y x) \bigr)\bigl( (S_{\G} \tens \id)(\Delta_{\G}(\widetilde{y})) \bigr) \Bigr) .
\]
Finally, substituting the expression for $(\id \tens \cF^{-1})$ from (\ref{def:partial-inverse-FT}) into the above formula yields the stated formula.
\end{proof}

%We verify that our definitions reproduce the classical short-time Fourier transform when the underlying quantum group is a locally compact abelian group.

\begin{example}[Locally compact abelian groups]\label{ex:lcag}
When $\G$ is a locally compact abelian group $G$ with a fixed Haar measure $\mathrm{d}x$, the dual group $\widehat\G$ is the group $\widehat{G}$ of one dimensional unitary representations with a Haar measure $\mathrm{d}\gamma$. For $x \in G $ and $\gamma \in \widehat{G}$, the translation operators $T_x$, the modulation operator $E_\gamma$ on $\Lp{2}{G}$ and the corresponding time-frequency shift $\pi(x,\gamma)$ are defined  by 
\begin{equation}\label{def:time-frequency shift}
    T_xf(y)=f(x^{-1}y), \quad  E_\gamma f(y)=\gamma(y)f(y),\quad  \pi(x,\gamma)f(y)=E_\gamma T_xf(y),
\end{equation}
where $y\in G.$ 
The left regular representation  of $G$ is a representation $\lambda$ of $G$ on $\Lp{2}{G}$ given by $\lambda_x=T_x,x\in G.$
The group representation $\pi : G \to \Bop(\Lp{2}{G})$ is lifted onto the group algebra $\Lp{1}{G}$ via
\[
\widetilde{\pi} : \Lp{1}{G} \to \Bop(\Lp{2}{G}), \qquad
\widetilde{\pi}(f) = \int_{G} f(x) \pi(x) \, \mathrm{d}x,
\]
so that $\widetilde{\pi}(f)(\xi) = \int_{G} f(x) \pi(x) \xi \, \mathrm{d}x$ for $\xi \in \Lp{2}{G}$.  

Therefore, the translation operator $T_x$ ($x \in G$) is lifted to be
\[
\widetilde{T} : \Lp{1}{G} \to \Bop(\Lp{2}{G}), \quad 
\widetilde{T}_g(f)(y) = \int_{G} g(x) T_x(f)(y) \, \mathrm{d}x = (g \conv f)(y),
\]
 and the modulation operator $E_\gamma$ ($\gamma \in \widehat{G}$) is lifted to
\[
\widetilde{E} : \Lp{1}{\widehat{G}} \to \Bop(\Lp{2}{G}), \quad
\widetilde{E}_h(f)(y) = \int_{\widehat{G}} h(\gamma) E_\gamma(f)(y) \, \mathrm{d}\gamma 
= \mathcal{M}_{F^{-1}(h)}(f)(y),
\]
where $F$ is the classical Fourier transform, and $\mathcal{M}_{F^{-1}(h)}$ denotes the multiplication operator by $F^{-1}(h)$.

Finally, we can raise up the time-frequency shift $\pi(x,\gamma) = E_\gamma T_x$ to
\[
\widetilde{\pi} : \Lp{1}{G} \tens \Lp{1}{\widehat{G}} \to \Bop(\Lp{2}{G}), \qquad
\widetilde{\pi}_\xi(f)(y) = \int_{\widehat{G}} \int_G \xi(x,\gamma) \pi(x,\gamma)(f)(y) \, \mathrm{d}x \mathrm{d}\gamma.
\]
When $\xi = g \tens h$, this becomes $\widetilde{\pi}_\xi(f) = \mathcal{M}_{F^{-1}(h)}(g \conv f) = \widetilde{E}_h \widetilde{T}_g(f)$.

Thus, between groups and convolution algebras, we have the following identifications
\begin{align*}
\lambda_x = T_x &\longleftrightarrow \widetilde{T}_g = \widetilde{\lambda}(g) \quad (x \in G,\; g \in \Lp{1}{G}), \\
\mathcal{M}_\gamma = E_\gamma &\longleftrightarrow \widetilde{E}_h = \mathcal{M}_{F^{-1}(h)} \quad (\gamma \in \widehat{G},\; h \in \Lp{1}{\widehat{G}}), \\
\pi((x,\gamma)) = E_\gamma T_x &\longleftrightarrow \widetilde{E}_{h} \widetilde{T}_{g} = \widetilde{\pi}(g \tens h).
\end{align*}
Consequently, Definitions~\ref{def:translation}, \ref{def:modulation} and~\ref{def:time-freq-shift} coincide with the classical lifted operators when $\G$ is a LCA group.

Note that, the quantum Fourier transform $\cF$ in Definition~\ref{def:fourier} is exactly $\cF(g)=\widetilde{\lambda}(g)$ for any $g\in L^1(G)$, and so $\cF(g)=F^{-1}\mathcal{M}_{F(g)}F$, which shows that $\cF$ goes back to $F$ when $\G$ is a LCA group (cf. \cite[proposition~5.1]{kahng2010fourier}).

For $f, g \in \Lp{2}{G}$, one has $J_{\varphi}f(x) = \overline{f(x)}$, $J_{\widehat{\varphi}}f(x) = \overline{f(x^{-1})}$ and $V(g \tens f)(x,t) = g(xt)f(t)$. Hence
\[
(\id \tens F)(J_{\widehat{\varphi}} \tens J_{\varphi}) V(g \tens J_{\varphi} f)(x,\gamma)
= \int_G f(t) \overline{g(x^{-1}t)} \overline{\gamma(t)} \, \mathrm{d}t
= \langle f \mid E_{\gamma} T_x g \rangle,
\]
which shows that Definition~\ref{def:STFT} reproduces the classical windowed Fourier transform on an LCA group.

In the same spirit, the quantum short-time Fourier transform can be viewed as the lift  of the classical STFT to the convolution algebra setting. Indeed, for any $h_1 \in \Lp{1}{G} \cap \Lp{2}{G}$ and $h_2 \in \Lp{1}{\widehat{G}} \cap \Lp{2}{\widehat{G}}$, the operators $\widetilde{T}_{h_1}$ and $\widetilde{E}_{h_2}$ are exactly $T_{h_1}^{\G}$ and $E_{h_2}^{\G}$, defined in Definitions~\ref{def:translation} and \ref{def:modulation}, respectively. 
Then we have that the lifted STFT is as follow:
\begin{align*}
\tilde{\phi}_g f(h_1, h_2)
&:=\int_{G} \int_{\widehat{G}} \langle f \mid h_1(x) h_2(\gamma) E_{\gamma} T_x g \rangle \, \mathrm{d}x \mathrm{d}\gamma \\
&= \int_{G} \int_{\widehat{G}} \langle f \mid E_{\gamma} T_x g \rangle \overline{h_1(x) h_2(\gamma)} \, \mathrm{d}x \mathrm{d}\gamma \\
&= \int_{G} f(t) \Bigl( \int_{\widehat{G}} \Bigl( \int_{G} \overline{g(x^{-1}t) h_1(x)} \, \mathrm{d}x \Bigr) \overline{\gamma(t) h_2(\gamma)} \, \mathrm{d}\gamma \Bigr) \mathrm{d}t \\
&= \int_G f(t) \, \overline{F^{-1}(h_2)(t) (h_1 \conv g)(t)} \, \mathrm{d}t \\
&= \langle f \mid \widetilde{E}_{h_2}\widetilde{T}_{h_1} g \rangle  
= \langle f \mid E_{h_2}^{\G} T_{h_1}^{\G} g \rangle 
= \langle \phi^{\G}_g f \mid h_1 \tens h_2 \rangle .
\end{align*}
Thus Theorem~\ref{prop:STFT-basic} provides precisely the $L^1$-lifted version of the short-time Fourier transform. Just as in the classical case, the quantum (lifted) STFT, regarded as an anti-linear functional on $\Lp{2}{\G} \tens \Lp{2}{\dualG}$, is completely determined by the quantum (lifted) time-frequency shift operator.
\end{example}

\begin{example}[Unimodular locally compact groups of type~I]\label{ex:ulcg_typeI}
Consider that $\G$ is a second-countable unimodular locally compact group $G$ of type~I with a fixed Haar measure $\mathrm{d}x$.
Denote by $\Irr(G)$ the set of equivalence classes of irreducible unitary representations of $G$, endowed with the Fell topology and the Plancherel measure $\mathrm{d}\pi$.  Here we still use $\pi$ to denote its equivalence class $[\pi]$. Recall that $HS(H_{\pi})$ is the Hilbert space of Hilbert-Schmidt operators on $H_\pi$, equipped with the inner product $\langle T\,|\,S\rangle_{HS(H_\pi)}=\Tr(S^*T)$, where $\Tr$ denotes the trace of an operator. 
The family $\{HS(H_\pi)\}_{[\pi]\in\Irr(G)}$ of Hilbert spaces indexed by $\Irr(G)$ is a measurable field of Hilbert spaces over $\Irr(G)$.
The Plancherel  theorem (see  \cite[Theorem 7.44]{Fo} or \cite[Section 2]{FK}) shows that, the Fourier transform, given by
\[
F f(\pi) = \widehat{f}(\pi) := \int_{G} f(x) \pi(x)^{*} \, \mathrm{d}\mu(x), \quad f \in \mathcal{I}^1(G):=\Lp{1}{G} \cap \Lp{2}{G}, 
\] 
can be extended to a unitary map \[F : \Lp{2}{G} \to \mathcal{H}^2(\widehat{\G}):=\int_{\Irr(G)}^{\oplus} HS(H_{\pi}) \, \mathrm{d}\pi,\] 
which intertwines the left regular representation $\lambda$ of $G$ with $\int_{\Irr(G)}^{\oplus} \id\otimes\bar\pi\, \mathrm{d}\pi$. Here $\bar\pi$ is the conjugated representation of $\pi$ on the conjugated Hilbert space $\overline{H}_\pi$, and the tensor product $\rho\otimes\pi$ of two unitary representations $\rho$ and $\pi$ corresponds to the unitary representation of $G$ on $HS(\overline{H}_\pi,H_\rho)$ given by $T\mapsto \rho(x)T\bar\pi(x)^*$ for any $T\in HS(\overline{H}_\pi,H_\rho)$ and $x\in G$.
Also, we denote by $\mathcal{I}^2(G)$ the finite $\mathbb{C}$-linear combinations of continuous of elements of $\mathcal{I}^1(G)$, and each $h\in\mathcal{I}^2(G)$ satisfies the Fourier inversion formula $h(x)=F^{-1}(\widehat{h})(x)= \int_{\Irr(G)} \Tr\bigl(\widehat{h}(\pi)\pi(x)\bigr)\,\mathrm{d}\pi$. The readers can refer to \cite{Fo} or \cite{FK} for details.

As a locally compact quantum group $\G$ , its quantum algebra $\Lp{\infty}{G}$ with the comultiplication
$\Delta f(x,y) = f(xy)$ is a commutative unimodular Kac algebra, and its dual is the group von Neumann algebra
\[\Lp{\infty}{\widehat\G}=\mathcal{L}(G) = \overline{\{ \lambda_{f} : \lambda_{f} (g)(t) = \int_{G} f(x) g(x^{-1}t) \, \mathrm{d}\mu(x) \}}^{w*}.\]

As same as in Example~\ref{ex:lcag}, in the present case, the Fourier transform in the quantum-group sense (Definition~\ref{def:fourier}) also can go back to the classical Fourier transform above, since for $T=\{T_\pi\}_{[\pi]\in\Irr(G)} \in\mathcal{H}^2(\widehat{\G})$, by \cite[proposition~5.1]{kahng2010fourier} and \cite[Theorem~7.37]{Fo}, one has
\[
\begin{aligned}
(F\circ \cF f\circ F^{-1})(T)
&= (F\circ \tilde\lambda(f)\circ F^{-1})(T) \\
&= \int_{\Irr(G)}^{\oplus} (\id\otimes \bar{\tilde\pi})(f)(T_\pi)\, \mathrm{d}\pi  = \int_{\Irr(G)}^{\oplus} T_\pi\tilde\pi(f)^*\, \mathrm{d}\pi  \\
&= \int_{\Irr(G)}^{\oplus} T_\pi \left(\int_G f(x)\pi(x)^* \mathrm{d}x \right)\, \mathrm{d}\pi \\
&= \int_{\Irr(G)}^{\oplus} T_\pi (F f)(\pi) \mathrm{d}\pi :=\mathcal{M}_{F f}(T).
\end{aligned}
\]

Hence, by Definition~\ref{def:STFT}, under the classical Fourier transform, the STFT of $f \in \Lp{2}{G}$ with window $g \in \Lp{2}{G}$ can be written as
\[
\begin{aligned}
(\phi_{g} f)(x, \pi)
&= (\id \tens F)\bigl( (1 \tens f) (S \tens \id)(\Delta(g^{*})) \bigr)(x, \pi) \\
&= \int_{G} f(t) \, \overline{g(x^{-1}t)} \, \pi(t)^{*} \, \mathrm{d}t \\
&= \int_{G}f(t)\bigl((E_{\pi}T_{x}g)(t)\bigr)^{*}\,\mathrm{d}t,
\end{aligned}
\]
where $T_x$ is the ordinary translation operator, and $E_{\pi}:\Lp{2}{G}\to \Lp{2}{G,\Bop(H_{\pi})}$ is the modulation operator defined by $E_{\pi}f(x)=f(x)\pi(x)$, for any $x \in G$, $\pi \in \Irr(G)$.  
Our definition of STFT coincides with the definition of the continuous Gabor transform in \cite[Definition~3.1]{FK}.

As the same as in the abelian case, the quantum translation operator  $T^{\G}_g$ in Definition~\ref{def:translation} is also $\widetilde{T}_g=\tilde\lambda(g)$ for any $g\in L^1(G)$.   
For any $h\in\mathcal{H}^2(\widehat{\G})\cap F(\Lp{1}{G})$,
the quantum modulation operator $E^{\G}_{h}$ in Definition~\ref{def:modulation} takes the form
\[
\begin{aligned}
E^{\G}_{h}f(x)=\widetilde{E}_{h}f(x)=\mathcal{M}_{F^{-1}h}f(x)
&= f(x)\int_{\Irr(G)} \Tr\bigl(h(\pi)\pi(x)\bigr)\,\mathrm{d}\pi  \\
&= \int_{\Irr(G)} \Tr\bigl(h(\pi)(E_{\pi}f)(x)\bigr)\,\mathrm{d}\pi.
\end{aligned}
\]

Thus, Similar to Example~\ref{ex:lcag}, for any $f,\, g\in\Lp{2}{G}$, $h_1 \in \Lp{1}{G} \cap \Lp{2}{G}$ and $h_2\in\mathcal{H}^2(\widehat{\G})\cap F(\Lp{1}{G})$, we also have
\begin{align*}
\langle \phi^{\G}_g f \mid h_1 \tens h_2 \rangle
&= \langle f \mid E_{h_2}^{\G} T_{h_1}^{\G} g \rangle= \langle f \mid \widetilde{E}_{h_2}\widetilde{T}_{h_1} g \rangle \\
&=\int_{\Irr(G)} \int_{G} \left\langle f \mid \Tr\bigl(h_1(x)(E_{\pi}T_{x}g)h_2(\pi)\bigr)\right\rangle_{\Lp{2}{G}} \, \mathrm{d}x \mathrm{d}\pi:=\tilde\phi_g f(h_1, h_2),
\end{align*}
which can be viewed as the $L^1$-lift of $\phi_g f$ onto the convolution algebra.
\end{example}

\bigskip
\section{Time-frequency shift as (co)projective representation}\label{sec4}
\medskip

This section explores the structural aspects of the time-frequency shift operator. Projective representations play a fundamental role in time-frequency analysis. In the classical setting, the short-time Fourier transform on a locally compact abelian group is built upon the Weyl-Heisenberg representation $\pi$. We demonstrate that, for a unimodular Kac algebra $\G$, the operator $\pi_{\G}$ introduced in Definition~\ref{def:time-freq-shift} also possesses a projective (co)representation structure associated with a specific 2-cocycle.

Let $\Omega$ be a 2-cocycle on $\G$. For $a, b \in \mathfrak{M}_{\varphi}$, there exists a unique element $a \conv_{\Omega} b := j^{-1}(\varphi_a \conv_{\Omega} \varphi_b)$. By Proposition~\ref{prop:twisted-convolution-regular} we have $\Lambda_{\varphi}(a \conv_{\Omega} b) = \lambda_{\G}^{\Omega}(\varphi_a) \Lambda_{\varphi}(b)$. Hence, for $a \in \Lp{1}{\G}$ and $b \in \Lp{2}{\G}$, we may define $a \conv_{\Omega} b = \lambda_{\G}^{\Omega}(\varphi_a)(b)$.

Write $\G \times \dualG$ for the direct product of $\G$ and its dual. Then $\Lp{\infty}{\G \times \dualG} = \Lp{\infty}{\G} \tens \Lp{\infty}{\dualG}$ is a locally compact quantum group with coproduct 
\[
\Delta_{\G \times \dualG} = (\id \tens \chi \tens \id) (\Delta_{\G} \tens \Delta_{\dualG})
\]
and invariant tracial weight $\varphi \tens \widehat{\varphi}$.

\begin{lemma}\label{lem:cocycle}
The operator $W_{14}^{\G*}$ is a unitary $2$-cocycle for $\G \times \dualG$.
\end{lemma}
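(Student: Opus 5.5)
The plan is to verify the cocycle identity \eqref{eq:cocycle} directly, keeping track of the leg numbering in the six-fold tensor product. Set $\Omega := W^{\G*}_{14}$. Here $W^{\G}\in \Lp{\infty}{\G}\tens\Lp{\infty}{\dualG}$, so $\Omega$ is a unitary in
\[
(\Lp{\infty}{\G}\tens\Lp{\infty}{\dualG})\tens(\Lp{\infty}{\G}\tens\Lp{\infty}{\dualG}) = \Lp{\infty}{\G \times \dualG}\tens \Lp{\infty}{\G \times \dualG}.
\]
It sits in the $\G$-leg of the first factor (leg $1$) and the $\dualG$-leg of the second factor (leg $4$). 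Unitarity is immediate, so only \eqref{eq:cocycle} needs checking. In that identity the three copies of $\G\times\dualG$ occupy legs $(1,2)$, $(3,4)$, $(5,6)$, and the odd legs are the $\G$-legs.

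For the left-hand side, $\Omega\tens 1 = W^{\G*}_{14}$. In $(\Delta_{\G\times\dualG}\tens\id)(\Omega)$, the coproduct $\Delta_{\G \times \dualG}=(\id\tens\chi\tens\id)(\Delta_\G\tens\Delta_{\dualG})$ acts on the first copy. It touches only the $\G$-leg of $W^{\G*}$, which $\Delta_\G$ splits into legs $1$ and $3$, while the $\dualG$-leg moves to leg $6$. Using the standard identity $(\Delta_\G\tens\id)(W^{\G})=W^{\G}_{13}W^{\G}_{23}$, which follows from $\Delta_\G(x)=W^{\G*}(1\tens x)W^{\G}$ and the pentagon equation, this term equals $W^{\G*}_{36}W^{\G*}_{16}$. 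Hence
\[
\text{LHS}=W^{\G*}_{14}\,W^{\G*}_{36}\,W^{\G*}_{16}.
\]

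For the right-hand side, $1\tens\Omega=W^{\G*}_{36}$. In $(\id\tens\Delta_{\G\times\dualG})(\Omega)$ the coproduct acts on the $\dualG$-leg of $W^{\G*}$, which $\Delta_{\dualG}$ splits into legs $4$ and $6$, while the $\G$-leg stays at leg $1$. Using $(\id\tens\Delta_{\dualG})(W^{\G})=W^{\G}_{13}W^{\G}_{12}$, derived from $\Delta_{\dualG}(x)=\Sigma W^{\G}(x\tens 1)W^{\G*}\Sigma$ and the pentagon equation, this term becomes $(W^{\G}_{16}W^{\G}_{14})^*=W^{\G*}_{14}W^{\G*}_{16}$. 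Hence
\[
\text{RHS}=W^{\G*}_{36}\,W^{\G*}_{14}\,W^{\G*}_{16}.
\]

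The two sides agree because $W^{\G*}_{14}$ and $W^{\G*}_{36}$ act on disjoint legs and therefore commute. The only delicate point is getting the two coproduct formulas for $W^{\G}$ right in the paper's conventions, namely $W^{\G}\in\Lp{\infty}{\G}\tens\Lp{\infty}{\dualG}$ and $\Delta_{\dualG}$ defined via $\Sigma W^{\G}(\cdot\tens 1)W^{\G*}\Sigma$. I expect this to be the main obstacle, and I would settle it by a short pentagon-equation computation, citing \cite{kustermans2000locally} or \cite{vaes2001locally} for the standard forms. A second point to watch is correctly tracking where the flip $\chi$ inside $\Delta_{\G\times\dualG}$ sends each leg.
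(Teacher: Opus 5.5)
Your proposal is correct and matches the paper's proof: both sides are expanded using $(\Delta_{\G}\tens\id)(W^{\G})=W^{\G}_{13}W^{\G}_{23}$ and $(\id\tens\Delta_{\dualG})(W^{\G})=W^{\G}_{13}W^{\G}_{12}$. This gives $W^{\G*}_{14}W^{\G*}_{36}W^{\G*}_{16}$ and $W^{\G*}_{36}W^{\G*}_{14}W^{\G*}_{16}$, which agree because $W^{\G*}_{14}$ and $W^{\G*}_{36}$ act on disjoint legs and so commute; your leg bookkeeping and your pentagon-based derivations of the two coproduct identities are also correct.
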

\begin{proof}
We verify the cocycle condition~\eqref{eq:cocycle} directly. On the right-hand side, we have
\begin{align*}
&(1_{\G \times \dualG} \tens W_{14}^{\G*})(\id\tens \id \tens \Delta_{\G \times \dualG})(W_{14}^{\G*}) \\
&= W_{36}^{\G*} \bigl( \id\tens \id  \tens (\id \tens \chi \tens \id)(\Delta_{\G} \tens \Delta_{\dualG}) \bigr)(W_{14}^{\G*}) \\
&= W_{36}^{\G*} \bigl( (\id \tens \Delta_{\dualG})(W^{\G*}) \bigr)_{146}
   = W_{36}^{\G*} (W_{12}^{\G*} W_{13}^{\G*})_{146} \\
&= W_{36}^{\G*} W_{14}^{\G*} W_{16}^{\G*} .
\end{align*}
On the left-hand side, we have
\begin{align*}
&(W_{14}^{\G*} \tens 1_{\G \times \dualG})(\Delta_{\G \times \dualG} \tens \id\tens \id)(W_{14}^{\G*}) \\
&= W_{14}^{\G*} \bigl( (\Delta_{\G} \tens \id)(W^{\G*}) \bigr)_{136}
   = W_{14}^{\G*} (W_{23}^{\G*} W_{13}^{\G*})_{136} \\
&= W_{14}^{\G*} W_{36}^{\G*} W_{16}^{\G*}= W_{36}^{\G*} W_{14}^{\G*} W_{16}^{\G*} .
\end{align*}
In these computations above, we used the standard identities $(\Delta_{\G} \tens \id)(W^{\G}) = W_{13}^{\G} W_{23}^{\G}$ and $(\id \tens \Delta_{\dualG})(W^{\G}) = W_{13}^{\G} W_{12}^{\G}$. Hence it is clear that $W_{14}^{\G*}$ satisfies the cocycle condition.
\end{proof}

When $\G$ is a locally compact abelian group $G$,
the operator $W_{14}^{\G*}$ coincides with the classical $2$-cocycle $\sigma(\nu_1, \nu_2) = \overline{\gamma_2(x_1)}$ on $G \times \widehat{G}$, where $\nu_i = (x_i, \gamma_i) \in G \times \widehat{G}\;(i=1,2)$. 
It is well known that the Weyl-Heisenberg representation $\pi$ in \eqref{def:time-frequency shift} is a square-integrable, irreducible, $\sigma$-projective representation of $G\times\widehat{G}$, and so is its reflected representation $\pi_{re}(\nu) = \sigma(\nu,\nu) \pi(\nu) = T_x E_\gamma$, where $\sigma_r(\nu_1, \nu_2) = \overline{\sigma(\nu_2, \nu_1)}$ is the reflected cocycle. 
The following results show that $\pi_{\G}$ also has the same properties (see Corollary~\ref{cor:projective-rep}).

\begin{theorem}\label{thm:projective-corep}
The unitary $W_{23}^{\dualG} W_{13}^{\G} \in \Lp{\infty}{\G} \tens \Lp{\infty}{\dualG} \tens \Bop(\Lp{2}{\G})$ is a $W_{14}^{\G}$-projective, irreducible, square-integrable left corepresentation of $(\G \times \dualG)_{W_{14}^{\G*}}$ on the Hilbert space $\Lp{2}{\G}$.
\end{theorem}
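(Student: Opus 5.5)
Set $\Omega := W_{14}^{\G*}$; by Lemma~\ref{lem:cocycle} it is a unitary $2$-cocycle on $\G\times\dualG$. Let $\mathbb{H} := (\G\times\dualG)_{\Omega}$, with coproduct $\Delta_{\mathbb{H}} = \Omega\,\Delta_{\G\times\dualG}(\cdot)\,\Omega^*$, and set $U := W_{23}^{\dualG}W_{13}^{\G} = W^{\G*}_{23}W^{\G}_{13}$, where legs $1,2,3$ are $\Lp{\infty}{\G}$, $\Lp{\infty}{\dualG}$, $\Bop(\Lp{2}{\G})$. By Definition~\ref{def:projective-corep} with cocycle $W_{14}^{\G} = \Omega^*$ on $\mathbb{H}$, the identity to prove is
\[
\bigl(\Omega^*\Delta_{\mathbb{H}}\tens\id\bigr)(U) = U_{125}\,U_{345},
\]
which simplifies to $(\Delta_{\G\times\dualG}\tens\id)(U)\,\Omega^*_{1234} = U_{125}U_{345}$. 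This is the quantum analogue of the Weyl commutation relation $\pi(\nu_1)\pi(\nu_2) = \sigma\,\pi(\nu_1\nu_2)$.

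\textbf{Step 1: the projective corepresentation identity.} I would compute the left-hand side using the identities from Lemma~\ref{lem:cocycle}, namely $(\Delta_{\G}\tens\id)(W^{\G}) = W_{13}W_{23}$ and $(\id\tens\Delta_{\dualG})(W^{\G}) = W_{13}W_{12}$, where all $W$ below mean $W^{\G}$. In the leg ordering $(\G,\G,\dualG,\dualG,B)$ after applying $\id\tens\chi\tens\id$, this gives
\[
(\Delta_{\G\times\dualG}\tens\id)(U) = W^*_{35}W^*_{45}\,W_{15}W_{25}.
\]
One subtlety: $(\Delta_{\dualG}\tens\id)(W^{\G*})$ must be computed from $W^{\dualG}=\Sigma W^{\G*}\Sigma$, giving $W^*_{35}W^*_{45}$ up to ordering, which I would double-check. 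The right-hand side is $W^*_{25}W_{15}W^*_{45}W_{35}$ under the $\G\times\dualG$ leg labelling $(1,3)$, $(2,4)$. The two sides are then reconciled by moving $W^*_{45}$ past $W_{15}$ using the pentagon equation in the form $W^*_{14}W_{15}W_{45} = W_{45}W_{15}$-type relations, i.e.\ $W_{12}W_{13}W_{23}=W_{23}W_{12}$ rearranged as $W_{23}^*W_{12}W_{23} = W_{12}W_{13}$. This commutation produces exactly a factor $W^{*}_{14}$ (or its adjoint), which is the cocycle. The main obstacle is bookkeeping the leg positions, and in particular checking whether the cocycle appearing is $W_{14}$ or $W^*_{14}$ with respect to the twisted coproduct. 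I would first test the computation on a LCA group, where it must reduce to $E_{\gamma_1}T_{x_1}E_{\gamma_2}T_{x_2} = \overline{\gamma_2(x_1)}\ldots$, to fix the conventions.

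\textbf{Step 2: square-integrability.} I would show that $U$ is unitarily equivalent, via $\id\tens\id\tens$(a suitable unitary), to a regular projective corepresentation of the form $W^{\mathbb{H}_{\Omega^*}}\Omega^*$ restricted appropriately. Alternatively, I would argue directly from the Moyal identity (Corollary~\ref{cor:Moyal}): the coefficient map $x\mapsto\phi^{\G}_y x$ is an isometry into $\Lp{2}{\G\times\dualG}$ for unit $y$. By Theorem~\ref{prop:STFT-basic}, its slices are $(\varphi\tens\widehat\varphi)$-square-integrable coefficients $(\id\tens\id\tens\omega_{y,x})(U^*)$. Hence $\alpha(\theta_{y,y})$ lies in $\mathfrak{M}_{\varphi\tens\widehat\varphi\tens\id}$ for every $y$. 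These elements generate $\alpha(\Bop(\Lp{2}{\G}))$ $\sigma$-weakly, which gives integrability of the action \eqref{eq:int-act} in the sense of \cite[Definition 10.2.2]{De phd}.

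\textbf{Step 3: irreducibility.} If $x$ intertwines $U$ with itself, slicing by $\varphi_a\tens\widehat\varphi_b$ shows that $x$ commutes with all $E_b^{\G}T_a^{\G}$. Taking approximate units in either variable, $x$ then commutes with $\Lp{\infty}{\G}$ and $\Lp{\infty}{\dualG}$. Since $\Lp{\infty}{\G}\vee\Lp{\infty}{\dualG}=\Bop(\Lp{2}{\G})$ (\cite{vaes2001locally}), $x$ is scalar. Alternatively, I would use Schur orthogonality from Moyal: the orthogonality relations force irreducibility, because an invariant subspace would split the identity $\|\phi_y x\|=\|x\|\|y\|$.
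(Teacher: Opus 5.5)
Your Steps 1 and 3 follow the paper's proof. The paper verifies $(\Delta_{\G\times\dualG}\tens\id)(U)\,W^{\G}_{14}=U_{125}U_{345}$ from the coproduct formulas for $W^{\G}$ and the pentagon equation. It then gets irreducibility by slicing a self-intertwiner to $[x,\lambda_{\dualG}(\theta)\lambda_{\G}(\omega)]=0$ and using $\Lp{\infty}{\G}'\cap\Lp{\infty}{\dualG}'=\mathbb{C}1$.

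Two small repairs are needed.

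In Step 1 you compare expressions written in different leg orders. Your left-hand side is in the unflipped order $(\G,\G,\dualG,\dualG,\Bop)$, and your right-hand side is in the order $(\G,\dualG,\G,\dualG,\Bop)$. Also, $W^{\dualG}_{23}$ is $W^{\G*}$ with its $\G$-leg in slot $3$, not $W^{\G*}_{23}$. In the order $(\G,\dualG,\G,\dualG,\Bop)$ one has
\[
(\Delta_{\G\times\dualG}\tens\id)(U)=W^{\dualG}_{25}W^{\dualG}_{45}W^{\G}_{15}W^{\G}_{35},\qquad U_{125}U_{345}=W^{\dualG}_{25}W^{\G}_{15}W^{\dualG}_{45}W^{\G}_{35}.
\]
After commuting $W^{\G}_{35}$ past $W^{\G}_{14}$, everything reduces to $W^{\dualG}_{45}W^{\G}_{15}W^{\G}_{14}=W^{\G}_{15}W^{\dualG}_{45}$. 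This is the pentagon equation $W^{\G*}_{23}W^{\G}_{12}W^{\G}_{13}=W^{\G}_{12}W^{\G*}_{23}$ with legs relabelled. So the cocycle enters as $W^{\G}_{14}=\Omega^*$ on the right, which settles your sign question.

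In Step 3, bounded approximate units of $\Lp{1}{\G}$ or $\Lp{1}{\dualG}$ need not exist, since that would amount to co-amenability. You only need nets $\lambda_{\G}(\omega_i)\to 1$ and $\lambda_{\dualG}(\theta_j)\to 1$ $\sigma$-weakly. These exist by $\sigma$-weak density, and separate $\sigma$-weak continuity of multiplication then finishes the argument.

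Step 2 takes a genuinely different route.

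\emph{The paper's route.} The paper does not prove square-integrability inside this theorem; it defers it to Corollary~\ref{cor:square-integrable}. There the covariance principle (Theorem~\ref{thm:covariance}) shows that $\phi^{\G}_y$, with $\|y\|_2=1$, is an isometric intertwiner embedding $U$ into the projective regular corepresentation $W^{\G\times\dualG}W^{\G}_{14}$. That corepresentation is square-integrable by the Galois-object result in Remark~\ref{rem:sqr-int-galoismap}, and square-integrability passes to subcorepresentations. This is the precise form of your first, vaguer alternative.

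\emph{Your route.} Your second argument is direct. Theorem~\ref{prop:STFT-basic} identifies $\phi^{\G}_{\xi}y$ with the adjoint of $c_{\xi,y}:=(\id\tens\id\tens\omega_{\xi,y})(U)$, so $c_{\xi,y}\in\mathfrak{N}_{\varphi\tens\widehat{\varphi}}$. Since $(\id\tens\id\tens\omega_{\xi,\xi})\alpha(\theta_{y,y})=c_{\xi,y}^*c_{\xi,y}$, the Plancherel identity gives $(\varphi\tens\widehat{\varphi}\tens\id)\alpha(\theta_{y,y})=\|y\|_2^2\,1$. Density of finite-rank operators then finishes.

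\emph{What each buys.} Your argument bypasses Section~\ref{sec5} and the Galois-object theory. It even yields the orthogonality relation $(\varphi\tens\widehat{\varphi}\tens\id)\circ\alpha=\Tr(\cdot)\,1$ (formal degree $1$). The paper's route instead gives the structural fact that $U$ sits inside the regular projective corepresentation.

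\emph{The price of your route.} $\alpha$ is a coaction of the twisted group $(\G\times\dualG)_{W^{\G*}_{14}}$, so integrability refers to \emph{its} left Haar weight, which you tacitly take to be $\varphi\tens\widehat{\varphi}$. That does hold here: slicing the $\G$-leg of the second factor first, the conjugation by $W^{\G}_{14}$ drops out under the trace $\widehat{\varphi}$, because $(R_{\G}\tens\id)(W^{\G})=W^{\G*}$ is unitary. For $\G=\Lp{\infty}{G}$ it is just conjugation invariance of the Plancherel trace. However, cocycle twisting does not preserve Haar weights in general, so you must state and justify this. The paper's route never needs to identify that weight.
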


\begin{proof}
From Lemma~\ref{lem:cocycle} we know that $W_{14}^{\G}$ is a 2-cocycle for the twisted quantum group $(\G \times \dualG)_{W_{14}^{\G*}}$. We first check condition~\eqref{eq:projective-corep} for the unitary $W_{23}^{\dualG} W_{13}^{\G}$ with respect to this cocycle:
\begin{align*}
&(\Delta_{\G \times \dualG} \tens \id)(W_{23}^{\dualG} W_{13}^{\G}) \cdot W_{14}^{\G} \\
&= (\id \tens \chi \tens \id \tens \id)
   \bigl( (\Delta_{\dualG} \tens \id)(W^{\dualG})_{345} (\Delta_{\G} \tens \id)(W^{\G})_{125} \bigr) \cdot W_{14}^{\G} \\
&= W_{25}^{\dualG} W_{45}^{\dualG} W_{15}^{\G} W_{35}^{\G} W_{14}^{\G} 
   = W_{25}^{\dualG} W_{45}^{\dualG} W_{15}^{\G} W_{14}^{\G} W_{35}^{\G} 
   = W_{25}^{\dualG} W_{15}^{\G} W_{45}^{\dualG} W_{35}^{\G} \\
&= (W_{23}^{\dualG} W_{13}^{\G})_{125} (W_{23}^{\dualG} W_{13}^{\G})_{345}.
\end{align*}
Here we again used $(\Delta_{\G} \tens \id)(W^{\G}) = W_{13}^{\G} W_{23}^{\G}$  together with the pentagon relation $W_{23}^{\G*} W_{12}^{\G} W_{13}^{\G} = W_{12}^{\G} W_{23}^{\G*}$. Therefore $W_{23}^{\dualG} W_{13}^{\G}$ is a $W_{14}^{\G}$-projective left corepresentation of $(\G \times \dualG)_{W_{14}^{\G*}}$.

To prove irreducibility, suppose $x \in \Bop(\Lp{2}{\G})$ satisfies
\[
W_{23}^{\dualG} W_{13}^{\G} ( 1_\G \tens 1_{\dualG} \tens x ) = (1_\G \tens 1_{\dualG} \tens x) W_{23}^{\dualG} W_{13}^{\G} .
\]
Then for all $\omega \in M_*$ and $\theta \in \widehat{M}_*$ we obtain
\[
\lambda_{\dualG}(\theta) \lambda_{\G}(\omega) x = x \lambda_{\dualG}(\theta) \lambda_{\G}(\omega),
\]
which implies $x \in \Lp{\infty}{\G}' \cap \Lp{\infty}{\dualG}',$ 
and then it directly follows from  \cite[Corollary 4.1.5]{Kac}) that $x = c 1$ for some scalar $c$. Hence the corepresentation is irreducible.

Square-integrability will be established in Corollary~\ref{cor:square-integrable} below.
\end{proof}

In the classical Gabor analysis, the $L^1$-lift $\widehat{\pi}$ of the time-frequency shift representation $\pi$ in \eqref{def:time-frequency shift} is not a homomorphism under the canonical convolution in $L^1(G\times\widehat{G})$, but can preserve the twisted convolution. 
Moreover, under the canonical involution $*$, this representation and its reflected representation satisfy that $\widehat{\pi}(f^*) = \widehat{\pi}_{re}(f)^*$ for any $f\in L^1(G\times\widehat{G})$. 
The next proposition tells us that these basic phenomenons mentioned above also exist in quantum group theory.

\begin{proposition}\label{prop:TF-properties}
For every $\nu \in \Lp{1}{\G} \tens \Lp{1}{\dualG}$ we have
\begin{align*}
\pi^{re}_{\G}(\nu) &= \pi_{\G}(\nu^{\sharp })^* = \pi_{\G}(W^{\G} \nu), \\
\pi_{\G}(\nu) &= \pi_{\G}^{re}(\mu^{\sharp  })^* = \pi^{re}_{\G}(W^{\G*} \nu).
\end{align*}

Moreover, for all $\nu, \mu \in \Lp{1}{\G} \tens \Lp{1}{\dualG}$,
\[
\pi_{\G}(\nu \conv_{W_{14}^{\G*}} \mu) = \pi_{\G}(\nu) \pi_{\G}(\mu).
\]
\end{proposition}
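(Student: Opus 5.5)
The plan is to work throughout at the level of the unitaries $U := W_{23}^{\dualG} W_{13}^{\G}$ and $U^{re} := W_{13}^{\G} W_{23}^{\dualG}$, since $\pi_{\G}(\nu) = ((\varphi\tens\widehat{\varphi})_\nu\tens\id)(U)$ and $\pi^{re}_{\G}(\nu) = ((\varphi\tens\widehat{\varphi})_\nu\tens\id)(U^{re})$. Each identity will first be proved for elementary tensors $\nu = a\tens b$ with $a\in\mathfrak{M}_{\varphi}$ and $b\in\mathfrak{M}_{\widehat{\varphi}}$. It then extends to all of $\Lp{1}{\G}\tens\Lp{1}{\dualG}$ by linearity and density, because $\pi_{\G}$, $\pi^{re}_{\G}$, $\sharp$, multiplication by $W^{\G}$, and the twisted convolution are all $\|\cdot\|_1$-continuous.

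\textbf{First group of identities.} Interpret $W^{\G}\nu$ as the $L^1$-element $j^{-1}\bigl((\varphi\tens\widehat{\varphi})_\nu\cdot W^{\G}\bigr)$ under the bimodule action. The key observation is that $W^{\G*}_{12}$ commutes with the relevant legs in the pentagon relation. From $W_{12}W_{13}W_{23} = W_{23}W_{12}$ we get
\[
W^{\G}_{12} W^{\G*}_{23} = W^{\G*}_{23} W^{\G}_{12} W^{\G}_{13},
\]
which in the legs used for $U$ reads $U^{re} = U\,W^{\G}_{12}$, with $W^{\G}_{12}\in \Lp{\infty}{\G}\tens\Lp{\infty}{\dualG}$. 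Slicing with $(\varphi\tens\widehat{\varphi})_\nu$ then gives
\[
\pi^{re}_{\G}(\nu) = \pi_{\G}(W^{\G}\nu),
\]
and since $W^{\G}$ is unitary this is equivalent to $\pi_{\G}(\nu) = \pi^{re}_{\G}(W^{\G*}\nu)$.

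For the adjoint identities, compute on elementary tensors:
\[
\pi_{\G}(a\tens b)^* = \bigl(E^{\G}_b T^{\G}_a\bigr)^* = T^{\G*}_a E^{\G*}_b = T^{\G}_{a^\sharp} E^{\G}_{b^\sharp}.
\]
This uses $(\cFp{1}x)^* = \cFp{1}(x^\sharp)$ on both $\G$ and $\dualG$. The right-hand side is exactly $\pi^{re}_{\G}(a^\sharp\tens b^\sharp)$. With $(a\tens b)^\sharp := a^\sharp\tens b^\sharp$, which is the natural involution on $\Lp{1}{\G\times\dualG}$, we obtain $\pi^{re}_{\G}(\nu) = \pi_{\G}(\nu^\sharp)^*$. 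Since $\sharp$ is an involution, the symmetric formula follows; the $\mu^\sharp$ in the statement should read $\nu^\sharp$.

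\textbf{Homomorphism property.} Remark~\ref{rem:corep-rep-correspondence} says that for an $\Omega$-projective left corepresentation $v$ of $\G$, the map $\pi_v(\omega) = (\omega\tens\id)(v)$ satisfies $\pi_v(\omega)\pi_v(\mu) = \pi_v(\omega\ast_{\Omega^*}\mu)$. We apply this to the twisted quantum group $\mathbb{H} := (\G\times\dualG)_{W_{14}^{\G*}}$, where Theorem~\ref{thm:projective-corep} shows $U$ is a $W^{\G}_{14}$-projective corepresentation. It is safer to verify the relation directly from the computed identity
\[
(\Delta_{\G\times\dualG}\tens\id)(U)\,W^{\G}_{14} = U_{125}U_{345},
\]
as follows:
\[
\pi_{\G}(\nu)\pi_{\G}(\mu) = (\omega_\nu\tens\omega_\mu\tens\id)(U_{125}U_{345}) = (\omega_\nu\tens\omega_\mu\tens\id)\bigl((\Delta_{\G\times\dualG}\tens\id)(U)W^{\G}_{14}\bigr).
\]
Then slice the $W^{\G}_{14}$ factor into the functional. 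The key point is that the definition of $\ast_\Omega$ with $\Omega = W_{14}^{\G*}$ gives $(\omega\ast_\Omega\mu)(x) = (\omega\tens\mu)\bigl(\Delta(x)W^{\G}_{14}\bigr)$, so the last expression equals $\pi_{\G}(\nu\ast_{W_{14}^{\G*}}\mu)$.

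\textbf{Main obstacle.} The hard part is the legitimacy of slicing the product $\Delta(x)W_{14}$ and bringing it under $(\omega_\nu\tens\omega_\mu)$ after tensoring with $\Bop(\Lp{2}{\G})$. I would handle this with the normal slice-map identity $(\theta\tens\id)\bigl((X\tens 1)Y\bigr)$, applied to $\theta = (\omega_\nu\tens\omega_\mu)\cdot W_{14}$, a normal functional, and extend from elementary $y$ by $\sigma$-weak density.

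A secondary bookkeeping issue is the leg order: the coproduct of $\G\times\dualG$ involves the flip $\chi$, so legs 1 and 4 must be identified correctly with the $\G$-leg of the first factor and the $\dualG$-leg of the second, to match $\omega_\nu\tens\omega_\mu$.
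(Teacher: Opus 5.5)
Your proposal is correct and follows the paper's proof. The first identities come from the pentagon rearrangement $U^{re}=U\,W^{\G}_{12}$, the adjoint identities from $\cFp{1}(x)^*=\cFp{1}(x^\sharp)$ applied on both legs, and multiplicativity from slicing the identity of Theorem~\ref{thm:projective-corep} against $\omega_\nu\tens\omega_\mu$, which is exactly the content of Remark~\ref{rem:corep-rep-correspondence}. You are also right that $\mu^\sharp$ in the statement should read $\nu^\sharp$.

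One small notational point: slicing $U\,W^{\G}_{12}$ produces the functional $x\mapsto(\varphi\tens\widehat{\varphi})_\nu(xW^{\G})$. In the paper's bimodule convention this is $W^{\G}(\varphi\tens\widehat{\varphi})_\nu$, which by traciality equals $(\varphi\tens\widehat{\varphi})_{W^{\G}\nu}$. Under that convention, the right action $(\varphi\tens\widehat{\varphi})_\nu\cdot W^{\G}$ you wrote would instead correspond to $\nu W^{\G}$, so put the action on the left.
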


\begin{proof}
Using the pentagon equation $W^{\G}_{12} W^{\G}_{13} W^{\G}_{23} = W^{\G}_{23} W^{\G}_{12}$ we compute
\begin{align*}
\pi^{re}_{\G}(\nu) &= (\varphi \tens \widehat{\varphi})_{\nu13}(W^{\G}_{12} W^{\G*}_{23}) 
                         = (\varphi \tens \widehat{\varphi})_{\nu13}(W^{\G*}_{23} W^{\G}_{12} W^{\G}_{13}) 
                         = \pi_{\G}(W^{\G} \nu), \\
\pi_{\G}(\nu)   &= (\varphi \tens \widehat{\varphi})_{\nu13}(W^{\G*}_{23} W^{\G}_{12}) 
                         = (\varphi \tens \widehat{\varphi})_{\nu13}(W^{\G}_{12} W^{\G*}_{23} W^{\G*}_{13}) 
                         = \pi^{re}_{\G}(W^{\G*} \nu).
\end{align*}
Since $\cFp{1}(x)^* = \cFp{1}(x^{\sharp })$ for $x \in \Lp{1}{\G}$, we also have $\pi_{\G}(\nu^{\sharp})^* = \pi_{\G}^{re}(\nu)$ and $\pi_{\G}^{re}(\nu^{\sharp  })^* = \pi_{\G}(\nu)$. The multiplicativity property follows directly from (\ref{prop:proj-corep-homomorphism}) together with Theorem~\ref{thm:projective-corep}.
\end{proof}

\begin{corollary}\label{cor:projective-rep}
The map $\pi_{\G} : \Lp{1}{\G \tens \dualG} \to \Bop(\Lp{2}{\G})$, $\nu \mapsto ((\varphi \tens \widehat{\varphi})_\nu \tens \id)(W_{23}^{\dualG} W_{13}^{\G})$, extends to a continuous $W_{14}^{\G}$-projective left representation of the dual quantum group of $(\G \times \dualG)_{W_{14}^{\G*}}$.
\end{corollary}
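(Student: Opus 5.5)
The plan is to read the corollary off Theorem~\ref{thm:projective-corep} through the corepresentation/representation correspondence of Remark~\ref{rem:corep-rep-correspondence}. Write $\mathbb{H} := \G \times \dualG$ and $\Omega := W_{14}^{\G*}$, which is a unitary $2$-cocycle on $\mathbb{H}$ by Lemma~\ref{lem:cocycle}. Its twisted quantum group is $\mathbb{H}_{\Omega}$, and $\Omega^{*} = W_{14}^{\G}$ is a $2$-cocycle on $\mathbb{H}_{\Omega}$ (cf.\ \cite[Proposition 2.4]{Tuset deformation}).

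First I apply Theorem~\ref{thm:projective-corep}. The unitary $v := W_{23}^{\dualG} W_{13}^{\G}$ is a $W_{14}^{\G}$-projective left corepresentation of $\mathbb{H}_{\Omega}$ on $\Lp{2}{\G}$. Remark~\ref{rem:corep-rep-correspondence} applies with the quantum group $\mathbb{H}_{\Omega}$ and the cocycle $W_{14}^{\G}$. It gives the map
\[
\pi_v : \Lp{\infty}{\mathbb{H}}_* \to \Bop(\Lp{2}{\G}), \qquad \omega \mapsto (\omega \tens \id)(v),
\]
which is a continuous $W_{14}^{\G}$-projective left representation of $\widehat{\mathbb{H}_{\Omega}}$. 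In the sense of Definition~\ref{def:projective-rep-dual}, this means it extends to a non-degenerate $*$-representation of the universal twisted group $C^*$-algebra $C_u^{*}((\mathbb{H}_{\Omega})_{\Omega^*}; \Omega) = C_u^{*}(\mathbb{H}; W_{14}^{\G*})$. The last equality holds because $(\mathbb{H}_{\Omega})_{\Omega^*} = \mathbb{H}$. The multiplicativity is $\pi_v(\omega)\pi_v(\mu) = \pi_v(\omega \conv_{W_{14}^{\G*}} \mu)$, which matches the last part of Proposition~\ref{prop:TF-properties}.

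Next I identify $\pi_{\G}$ with $\pi_v$ on $\Lp{1}{\G}\tens\Lp{1}{\dualG}$. The weight $\varphi\tens\widehat{\varphi}$ is a trace, so the map $j$ gives an isometric isomorphism $\Lp{1}{\mathbb{H}} \cong \Lp{\infty}{\mathbb{H}}_*$, $\nu \mapsto (\varphi\tens\widehat{\varphi})_\nu$. By Definition~\ref{def:time-freq-shift}, $\pi_{\G}(\nu) = \pi_v((\varphi\tens\widehat{\varphi})_\nu)$. The map $\pi_{\G}$ is contractive, and the algebraic tensor product is $\Lp{1}$-dense. The extension from the algebraic tensor product to all of $\Lp{1}{\G \tens \dualG}$ is therefore by norm continuity.

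For the $*$-compatibility, the relevant involution is $\sharp_{\Omega}$ on the predual. I will check $\pi_v(\omega^{\sharp_\Omega}) = \pi_v(\omega)^*$ from the unitarity of $v$. Alternatively, I will invoke \cite[Proposition 11.3.8]{De phd} directly. Proposition~\ref{prop:TF-properties} gives a concrete version of this in terms of $\pi^{re}_{\G}$. Non-degeneracy follows from irreducibility, since the closed linear span of $\pi_v(\Lp{\infty}{\mathbb{H}}_*)\Lp{2}{\G}$ is a nonzero invariant subspace. It is nonzero because $v$ is unitary, so slices cannot all vanish.

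The main obstacle is bookkeeping: tracking which quantum group carries which cocycle. One needs Definition~\ref{def:projective-rep-dual} with the roles of $\Omega$ and $\Omega^*$ placed correctly, so that the universal algebra is really built on the twisted convolution $\conv_{W_{14}^{\G*}}$. I will resolve this using the twisted-convolution formula $\omega\twconvs\mu$ stated in the preliminaries, applied to $\mathbb{H}_{\Omega}$ with cocycle $\Omega^*$.
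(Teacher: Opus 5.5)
Your proposal is correct and follows the argument the paper intends (the corollary is stated without a separate proof): the corepresentation identity from Theorem~\ref{thm:projective-corep} is passed through the correspondence of Remark~\ref{rem:corep-rep-correspondence}, and the resulting multiplicativity for $\conv_{W_{14}^{\G*}}$ is the one recorded in Proposition~\ref{prop:TF-properties}. Your cocycle bookkeeping is right, namely $(\mathbb{H}_{\Omega})_{\Omega^*}=\mathbb{H}$, so the relevant algebra is $C_u^{*}(\G\times\dualG;W_{14}^{\G*})$; your added checks (the identification $\pi_{\G}(\nu)=\pi_v((\varphi\tens\widehat{\varphi})_\nu)$, $*$-compatibility and non-degeneracy) fill in details the paper leaves to the cited correspondence.
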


\begin{remark}\label{rem:quantum-double}
According to the argument before Definition~9.2.1 in \cite{De phd}, the operator $W^{\dualG*}_{23} = (\Sigma W^{\G} \Sigma)_{23}$ is also a 2-cocycle for $\G \times \dualG$. In complete analogy with Theorem~\ref{thm:projective-corep}, one can show that $W_{13}^{\G} W_{23}^{\dualG}$ is a $W^{\dualG}_{23}$-projective, irreducible, square-integrable left corepresentation of $(\G \times \dualG)_{W_{23}^{\dualG*}}$ on $\Lp{2}{\G}$. Consequently, the map $\pi^{re}_{\G}$ extends to a continuous  $W_{23}^{\dualG}$-projective left representation of the dual quantum group of $(\G \times \dualG)_{W_{23}^{\dualG*}}$. The latter twisted quantum group is precisely the \emph{quantum double} of $\G$ and $\dualG$ with respect to $W^{\G}$ (cf.  \cite[Definition 9.2.1]{De phd}).
\end{remark}

\bigskip
\section{Covariance principle}\label{sec5}
\medskip

This section establishes two central structural properties of the short-time Fourier transform on unimodular Kac algebras: a fundamental identity relating the STFT on $\G$ to that on its dual $\dualG$, and a covariance principle that describes how the STFT intertwines the time-frequency shift operator with a projective regular representation. Both of this two results are direct generalizations of their classical counterparts in abelian time-frequency analysis.

\begin{theorem}[Fundamental identity]\label{thm:fundamental-identity}
For any $x, y \in \Lp{2}{\G}$,
\[
\phi_y^{\G} x = W^{\G} \Sigma \; \phi_{\cF y}^{\dualG} \cF x.
\]
\end{theorem}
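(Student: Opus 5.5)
The plan is to test both sides of the identity against elementary tensors and reduce to the characterization of the STFT in Theorem~\ref{prop:STFT-basic}. The ambient space is $\Lp{2}{\G}\tens\Lp{2}{\dualG}$. Here $\phi^{\dualG}_{\cF y}\cF x$ lies in $\Lp{2}{\dualG}\tens\Lp{2}{\G}$, because $\widehat{\dualG}\cong\G$; so $\Sigma$ moves it back into $\Lp{2}{\G}\tens\Lp{2}{\dualG}$. Both sides are bounded and sesquilinear in $(x,y)$, with norm $\|x\|_2\|y\|_2$ by the Plancherel part of Theorem~\ref{prop:STFT-basic}, since $\cF$, $\Sigma$ and $W^{\G}$ are unitary. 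It therefore suffices to prove
\[
\bigl\langle \phi_y^{\G}x \bigm| a\tens b\bigr\rangle=\bigl\langle \Sigma\,\phi^{\dualG}_{\cF y}\cF x \bigm| W^{\G*}(a\tens b)\bigr\rangle
\]
for $x,y$ in a dense set and for $a\tens b$ with $a\in\mathfrak{M}_{\varphi}$, $b\in\mathfrak{M}_{\widehat\varphi}$.

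The left side equals $\langle x\mid E^{\G}_bT^{\G}_a y\rangle$ by Theorem~\ref{prop:STFT-basic}. Apply the unitary $\cF$ and use Remark~\ref{rem:unitary-equivalence}, namely $\cF E^{\G}_b=T^{\dualG}_b\cF$ and $\cF T^{\G}_a=E^{\dualG}_a\cF$. This gives
\[
\langle x\mid E^{\G}_bT^{\G}_a y\rangle=\langle \cF x\mid T^{\dualG}_bE^{\dualG}_a\cF y\rangle=\langle \cF x\mid \pi^{re}_{\dualG}(b\tens a)\cF y\rangle .
\]
So the left side is the reflected time-frequency shift of $\dualG$ applied to the window $\cF y$. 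Proposition~\ref{prop:TF-properties}, applied to $\dualG$, gives $\pi^{re}_{\dualG}(\mu)=\pi_{\dualG}(W^{\dualG}\mu)$. Here $W^{\dualG}\mu$ denotes the multiplicative unitary acting on $\mu\in\Lp{2}{\dualG}\tens\Lp{2}{\G}$, via the $L^1\cap L^2$ identification used in that proposition. Taking $\mu=b\tens a$ and using Theorem~\ref{prop:STFT-basic} for $\dualG$ (extended by linearity and continuity from elementary tensors), we get
\[
\langle \cF x\mid \pi_{\dualG}(W^{\dualG}(b\tens a))\cF y\rangle=\langle \phi^{\dualG}_{\cF y}\cF x\mid W^{\dualG}\Sigma(a\tens b)\rangle .
\]
Finally, $W^{\dualG}=\Sigma W^{\G*}\Sigma$, so $W^{\dualG}\Sigma=\Sigma W^{\G*}$. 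Moving the unitaries to the other side yields $\langle W^{\G}\Sigma\,\phi^{\dualG}_{\cF y}\cF x\mid a\tens b\rangle$, which is the claim.

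The main obstacle is justifying the identity $\pi^{re}(\mu)=\pi(W\mu)$ beyond elementary tensors. Even for $\mu=b\tens a$, the vector $W^{\dualG}\mu$ is not an elementary tensor. It need not lie in $\Lp{1}{}\tens\Lp{1}{}\cap L^2$ in a way that lets us apply the pairing formula of Theorem~\ref{prop:STFT-basic} directly.

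My first attempt at this point would be to bypass Proposition~\ref{prop:TF-properties}. Instead I would compute both sides directly from the algebraic formula \eqref{eq:STFT-algebraic}, written for $\G$ and for $\dualG$, and pair them against $\Lambda_\varphi(a)\tens\Lambda_{\widehat\varphi}(b)$. On the $\dualG$ side, $\cF x$ plays the role of $x$, and its inverse transform returns $x$ itself. The $W^{\G}$ factor should then appear through $(\Delta_{\G}\tens\id)(W^{\G})=W^{\G}_{13}W^{\G}_{23}$ and through the implementation $\Delta_{\G}(z)=W^{\G*}(1\tens z)W^{\G}$. The pentagon identity, exactly as in the proof of Proposition~\ref{prop:TF-properties}, is the one algebraic relation I expect to need.

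As a sanity check, the conventions (in particular the flip $\Sigma$ and the fact that $\widehat{\dualG}$ recovers $\G$ with $\widehat{\cF}=\cF^{-1}$) should be confirmed in the LCA case. There the identity should reduce to the classical $V_gf(x,\gamma)=\overline{\gamma(x)}\,V_{\hat g}\hat f(\gamma,x^{-1})$ type formula.
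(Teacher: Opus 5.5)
Your proposal is correct and is essentially the paper's proof. Both arguments test against $a\tens b$ and use the same ingredients: the pairing of Theorem~\ref{prop:STFT-basic}, the Fourier intertwinings of Remark~\ref{rem:unitary-equivalence}, the relation $\pi^{re}=\pi(W\,\cdot\,)$ of Proposition~\ref{prop:TF-properties}, and $W^{\dualG}\Sigma=\Sigma W^{\G*}$. The only difference is the order: the paper first writes $\pi_{\G}(a\tens b)=\pi^{re}_{\G}(W^{\G*}(a\tens b))$ and then transports to $\pi_{\dualG}\circ\chi$, whereas you transport first and apply the proposition on $\dualG$. Both land on the same vector $\Sigma W^{\G*}(a\tens b)$.

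The paper also uses, without comment, the extension of the pairing formula to the non-elementary vector $W^{\dualG}(b\tens a)$ that you flag. That extension follows from three facts: $\|\pi_{\dualG}(\nu)\|\le\|\nu\|_1$, $L^2$-continuity of $\nu\mapsto\langle\phi^{\dualG}_{\cF y}\cF x\mid\nu\rangle$, and density of the algebraic tensor product in $L^1\cap L^2$ for the norm $\max(\|\cdot\|_1,\|\cdot\|_2)$. So your fallback computation is unnecessary.
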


\begin{proof}
Take $x, y \in \Lp{2}{\G}$, $a \in \mathfrak{M}_{\varphi}$ and $b \in \mathfrak{M}_{\widehat{\varphi}}$.
Using Theorem~\ref{prop:STFT-basic}, Remark~\ref{rem:unitary-equivalence} and Proposition~\ref{prop:TF-properties}, we compute
\begin{align*}
\bigl\langle \phi_y^{\G} x \bigm| \Lambda_{\varphi}(a) \tens \Lambda_{\widehat{\varphi}}(b) \bigr\rangle 
&= \bigl\langle x \bigm| E^{\G}_b T^{\G}_a y \bigr\rangle \\
&= \bigl\langle x \bigm| \pi^{re}_{\G}(W^{\G*} a \tens b)(y) \bigr\rangle \\
&= \bigl\langle \cF x \bigm| \pi_{\dualG}\bigl( \chi(W^{\G*} a \tens b) \bigr)(\cF y) \bigr\rangle \\
&= \bigl\langle \phi_{\cF y}^{\dualG} \cF x \bigm| \Sigma W^{\G*} \Lambda_{\varphi}(a) \tens \Lambda_{\widehat{\varphi}}(b) \bigr\rangle \\
&= \bigl\langle W^{\G} \Sigma \phi_{\cF y}^{\dualG} \cF x \bigm| \Lambda_{\varphi}(a) \tens \Lambda_{\widehat{\varphi}}(b) \bigr\rangle .
\end{align*}
Since $\mathfrak{M}_{\varphi}$ is dense in $\Lp{2}{\G}$, the fundamental identity follows.
\end{proof}

\begin{theorem}[Covariance principle]\label{thm:covariance}
For any $a \in \Lp{1}{\G} \tens \Lp{1}{\dualG}$ and $x, y \in \Lp{2}{\G}$, one has
\[
\phi^{\G}_y \bigl( \pi_{\G}(a) x \bigr)
= \lambda_{\G \times {\dualG}}^{W_{14}^{\G*}} \bigl( (\varphi \tens \widehat{\varphi})_a \bigr) \; \phi^{\G}_y x.
\]
\end{theorem}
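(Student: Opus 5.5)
The plan is to prove the identity weakly, by pairing both sides against elementary tensors $\Lambda_{\varphi}(c)\tens\Lambda_{\widehat{\varphi}}(d)$ with $c\in\mathfrak{M}_{\varphi}$, $d\in\mathfrak{M}_{\widehat{\varphi}}$. Since these span a dense subspace of $\Lp{2}{\G}\tens\Lp{2}{\dualG}$, this suffices. By linearity and the contractivity of $\nu\mapsto\pi_{\G}(\nu)$ and of $\lambda^{W^{\G*}_{14}}_{\G\times\dualG}$, both sides depend continuously on $a$ in $\|\cdot\|_1$. We may therefore also assume that $a\in\mathfrak{M}_{\varphi}\odot\mathfrak{M}_{\widehat{\varphi}}$.

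\textbf{Step 1 (left-hand side).} By Theorem~\ref{prop:STFT-basic},
\[
\bigl\langle \phi^{\G}_y(\pi_{\G}(a)x)\bigm| c\tens d\bigr\rangle=\bigl\langle \pi_{\G}(a)x\bigm|\pi_{\G}(c\tens d)y\bigr\rangle=\bigl\langle x\bigm|\pi_{\G}(a)^*\pi_{\G}(c\tens d)y\bigr\rangle .
\]
Proposition~\ref{prop:TF-properties} gives $\pi_{\G}(a)^*=\pi^{re}_{\G}(a^\sharp)$. We then rewrite this as $\pi_{\G}(W^{\G}a^\sharp)$ and use the twisted multiplicativity $\pi_{\G}(\nu)\pi_{\G}(\mu)=\pi_{\G}(\nu\conv_{W^{\G*}_{14}}\mu)$. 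This yields
\[
\bigl\langle x\bigm|\pi_{\G}(\nu_a\conv_{W^{\G*}_{14}}(c\tens d))y\bigr\rangle=\bigl\langle \phi^{\G}_yx\bigm|\nu_a\conv_{W^{\G*}_{14}}(c\tens d)\bigr\rangle,
\]
where $\nu_a$ is the appropriate involuted element $W^{\G}a^\sharp$, equivalently $(a^\sharp)$ written in the $\pi_{\G}$-picture. Here we treat $\nu\conv_{W^{\G*}_{14}}\mu$ as an element of $\Lp{1}{}\cap\Lp{2}{}$ of $\G\times\dualG$, via $j^{-1}$ as in Section~\ref{sec4}. The last equality uses Theorem~\ref{prop:STFT-basic} again, extended by density from elementary tensors to the $L^1\cap L^2$-element $\nu_a\conv_{W^{\G*}_{14}}(c\tens d)$; this extension follows from the bound $\|\pi_{\G}(\nu)\|\le\|\nu\|_1$.

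\textbf{Step 2 (right-hand side).} By Proposition~\ref{prop:twisted-convolution-regular}, applied to the quantum group $\G\times\dualG$ with cocycle $\Omega=W^{\G*}_{14}$,
\[
\lambda^{W^{\G*}_{14}}_{\G\times\dualG}\bigl((\varphi\tens\widehat{\varphi})_{\mu}\bigr)\,\Lambda(c\tens d)=\Lambda\bigl(\mu\conv_{W^{\G*}_{14}}(c\tens d)\bigr).
\]
The right-hand side pairing therefore equals
\[
\bigl\langle \phi^{\G}_yx\bigm|\lambda^{W^{\G*}_{14}}_{\G\times\dualG}((\varphi\tens\widehat{\varphi})_a)^*(c\tens d)\bigr\rangle .
\]
So we must identify the adjoint $\lambda^{\Omega}((\varphi\tens\widehat{\varphi})_a)^*$ with $\lambda^{\Omega}$ of the twisted involution of $a$. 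This follows because $\lambda^{\Omega}$ is a $*$-representation of $C_u^*$ with the compatible involution $\sharp_\Omega$ (Remark~\ref{rem:corep-rep-correspondence}, Definition~\ref{def:projective-rep-dual}). Combining with Proposition~\ref{prop:twisted-convolution-regular}, the pairing becomes $\langle\phi^{\G}_yx\mid a^{\sharp_\Omega}\conv_{\Omega}(c\tens d)\rangle$.

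\textbf{Step 3 (matching).} It remains to check $\nu_a=a^{\sharp_\Omega}$, i.e. that the involution produced by $\pi_{\G}(a)^*$ via Proposition~\ref{prop:TF-properties}, namely $W^{\G}a^\sharp$, coincides with the twisted involution $\sharp_{W^{\G*}_{14}}$ on $\Lp{1}{\G}\tens\Lp{1}{\dualG}$. Equivalently, we must show that $\pi_{\G}$ is a $*$-map for $\sharp_\Omega$. I expect this to be the main obstacle, since the paper only cites the abstract existence of $\sharp_\Omega$.

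My first attempt would be to prove it indirectly, avoiding explicit formulas. Both $\pi_{\G}$ and $\lambda^{\Omega}_{\G\times\dualG}$ arise, by Remark~\ref{rem:corep-rep-correspondence} and Theorem~\ref{thm:projective-corep}, as integrated forms of $\Omega$-projective corepresentations with the same cocycle, so both are $*$-homomorphisms for $(\conv_\Omega,\sharp_\Omega)$. Then $\pi_{\G}(a)^*=\pi_{\G}(a^{\sharp_\Omega})$ directly, and Step 1 becomes
\[
\langle \phi^{\G}_yx\mid a^{\sharp_\Omega}\conv_\Omega(c\tens d)\rangle,
\]
matching Step 2 without ever computing $\sharp_\Omega$.

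One caveat concerns which of $\conv_\Omega$ and $\conv_{\Omega^*}$ is relevant for each side. The paper's convention, by which $\lambda^{\Omega}$ represents $\widehat{\G_\Omega}$ while $\pi_{\G}$ is $W_{14}^{\G}$-projective for $(\G\times\dualG)_{W^{\G*}_{14}}$, must be reconciled with Proposition~\ref{prop:TF-properties}. I would handle this by a careful leg-numbering check before running the argument.
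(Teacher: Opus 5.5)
Your proposal is correct. Its skeleton matches the paper's: weak pairing against $L^1\cap L^2$ test vectors, the Plancherel pairing, $\pi_{\G}(a)^*$ followed by twisted multiplicativity, and Proposition~\ref{prop:twisted-convolution-regular}. The difference is how you handle the adjoint of $\lambda^{W_{14}^{\G*}}_{\G\times\dualG}$, which is the bulk of the paper's proof.

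\emph{The paper's route.} It proves by an explicit antipode computation that $\lambda^{W_{14}^{\G*}}_{\G\times\dualG}(\nu)^*=\lambda^{W_{14}^{\G*}}_{\G\times\dualG}(W^{\G}\nu^\sharp)$ for every $\nu$. This uses $S_{\G\times\dualG}=S_{\G}\tens S_{\dualG}$ and slices of $W^{\G\times\dualG}W_{14}^{\G}W_{12}^{\G}$. It then matches this with $\pi_{\G}(a)^*=\pi^{re}_{\G}(a^\sharp)=\pi_{\G}(W^{\G}a^\sharp)$ from Proposition~\ref{prop:TF-properties}.

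\emph{Your route.} You note that $\pi_{\G}$ and $\lambda^{W_{14}^{\G*}}_{\G\times\dualG}$ are integrated forms of two $W_{14}^{\G}$-projective left corepresentations of the same twisted quantum group $(\G\times\dualG)_{W_{14}^{\G*}}$. These are $W_{23}^{\dualG}W_{13}^{\G}$ (Theorem~\ref{thm:projective-corep}) and $W^{\G\times\dualG}W_{14}^{\G}$ (Section~\ref{sec2}). Both maps are therefore $*$-representations of the same $C_u^*(\G\times\dualG;W_{14}^{\G*})$, as in Remark~\ref{rem:corep-rep-correspondence} and Corollary~\ref{cor:projective-rep}. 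So one involution governs both adjoints, and you never need to compute it.

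This also settles your convention caveat. In both cases the product is $(\nu\tens\mu)(\Delta_{\G\times\dualG}(\cdot)W_{14}^{\G})$, the one appearing in Proposition~\ref{prop:TF-properties} and Proposition~\ref{prop:twisted-convolution-regular}.

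\emph{What each buys.}
\begin{itemize}
\item Your route is shorter and shows the covariance principle is formal once the cocycles match. It rests entirely on the cited correspondence: every such integrated form is $*$-preserving for a single involution $\sharp_\Omega$ determined by the cocycle.
\item Because $\sharp_\Omega$ is a priori defined only on a dense subspace, your continuity reduction must go to $a$ with $(\varphi\tens\widehat{\varphi})_a$ in that domain. Reducing to $\mathfrak{M}_{\varphi}\odot\mathfrak{M}_{\widehat{\varphi}}$ is not enough, since those elements are not known to lie in it.
\item The paper's computation is more self-contained and gives more. It identifies the twisted involution concretely as $\nu\mapsto W^{\G}\nu^\sharp$, defined on all of $\Lp{1}{\G}\tens\Lp{1}{\dualG}$, so no domain issue arises.
\item Both arguments extend the Plancherel pairing from elementary tensors to general elements of $L^1\cap L^2$ of $\G\times\dualG$. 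That needs approximants converging in the $L^1$ and $L^2$ norms simultaneously, not just the bound $\|\pi_{\G}(\nu)\|\le\|\nu\|_1$. The paper also uses this step without comment.
\end{itemize}
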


\begin{proof}
First we prove that for every $\nu \in \Lp{\infty}{\G}_* \tens \Lp{\infty}{\dualG}_*$, one has
\begin{equation}\label{prop:involution-twisted-regular}
    \lambda_{\G \times {\dualG}}^{W_{14}^{\G*}}(\nu)^*
= \lambda_{\G \times {\dualG}}^{W_{14}^{\G*}}(W^{\G} \nu^\sharp).
\end{equation}
Because for $\mu \in \Lp{\infty}{\G}_* \tens \Lp{\infty}{\dualG}_*$, we have
\begin{align*}
\mu\bigl( \lambda_{\G \times {\dualG}}^{W_{14}^{\G*}}(W^{\G} \nu^{\sharp}) \bigr)
&= \mu\bigl(( \nu^\sharp \tens \id \tens \id )(W^{\G \times \dualG} W_{14}^{\G} W_{12}^{\G})\bigr) \\
&= \bar{\nu} \bigl(S_{\G\times \dualG} ((\id\tens \id \tens  \mu)(W^{\G\times \dualG}W_{14}^{\G} W_{12}^{\G})) \bigr),
\end{align*}
while 
\[
\mu\bigl( \lambda_{\G \times {\dualG}}^{W_{14}^{\G*}}(\nu)^* \bigr)
= \mu\bigl( 
((\nu \tens \id \tens \id )(W^{\G\times \dualG} W_{14}^{\G}))^* \bigr)
= \bar{\nu}\bigl(\id \tens  \id \tens \mu)(W_{14}^{\G*} W^{\G\times \dualG*}) \bigr).
\]

Thus it suffices to prove
\begin{equation}\label{eq:S-G-dual}
S_{\G\times \dualG} \bigl((\id \tens \id \tens  \mu)(W^{\G\times \dualG}W_{14}^{\G} W_{12}^{\G})\bigr)
=(\id \tens \id \tens \mu)(W_{14}^{\G*} W^{\G\times \dualG*}).
\end{equation}

Indeed, choose $c,d,e,f\in \mathfrak{N}_{\varphi\tens \widehat{\varphi}}$. Then we have
\begin{align*}
&\bigl\langle \bigl((\id \tens \id \tens \omega_{\Lambda_{\varphi\tens \widehat{\varphi}}(c),\Lambda_{\varphi\tens \widehat{\varphi}}(d)} ) 
( W^{\G\times \dualG} W_{14}^{\G} W_{12}^{\G} ) \bigr)
\Lambda_{\varphi\tens \widehat{\varphi}}(e)  \bigm| \Lambda_{\varphi\tens \widehat{\varphi}}(f) \bigr\rangle \\
&= \bigl\langle W^{\G\times \dualG} W_{14}^{\G} W_{12}^{\G}
(\Lambda_{\varphi\tens \widehat{\varphi}}(e)\tens \Lambda_{\varphi\tens \widehat{\varphi}}(c)) \bigm| 
\Lambda_{\varphi\tens \widehat{\varphi}}(f)\tens \Lambda_{\varphi\tens \widehat{\varphi}}(d)\bigr\rangle \\
&= \bigl\langle W_{14}^{\G} W_{12}^{\G} (\Lambda_{\varphi\tens \widehat{\varphi}}(e)\tens \Lambda_{\varphi\tens \widehat{\varphi}}(c) ) \bigm| 
(\Lambda_{\varphi\tens \widehat{\varphi}}\tens \Lambda_{\varphi\tens \widehat{\varphi}})(\Delta_{\G\times \dualG}(d)(f\tens 1_{\G\times \dualG})) \bigr\rangle \\
&= \bigl\langle (\id \tens \id \tens  \varphi \tens \widehat{\varphi})
\bigl( \Delta_{\G\times \dualG}(d^*)  W_{14}^{\G} W_{12}^{\G} (1_{\G\times \dualG} \tens c ) \bigr)
\Lambda_{\varphi\tens \widehat{\varphi}}(e) \bigm| \Lambda_{\varphi\tens \widehat{\varphi}}(f) \bigr\rangle ,
\end{align*}
which implies 
\[
\bigl((\id \tens \id \tens\omega_{\Lambda_{\varphi\tens \widehat{\varphi}}(c),\Lambda_{\varphi\tens \widehat{\varphi}}(d)} ) 
( W^{\G\times \dualG} W_{14}^{\G} W_{12}^{\G} ) \bigr) 
= (\id \tens \id \tens  \varphi \tens \widehat{\varphi})
\bigl( \Delta_{\G\times \dualG}(d^*)  W_{14}^{\G} W_{12}^{\G} (1_{\G \times \dualG} \tens c ) \bigr).
\]

Observe that
\[
(\Delta_{\G} \tens \Delta_{\dualG})(W^{\G}) 
= (\id \tens \id \tens \Delta_{\dualG})(W_{13}^{\G} W_{23}^{\G}) 
= W_{14}^{\G} W_{13}^{\G} W_{24}^{\G} W_{23}^{\G},
\]
so that $W_{14}^{\G} W_{12}^{\G} = (\Delta_{\G\times\dualG})(W^{\G})(\id \tens \Sigma \tens \id) (\id \tens \Delta_{\dualG})(W^{\G*})_{234}(\id \tens \Sigma \tens \id)$. Consequently, we have
\begin{align*}
&(\id \tens \id \tens \omega_{\Lambda_{\varphi\tens \widehat{\varphi}}(c),\Lambda_{\varphi\tens \widehat{\varphi}}(d)} ) 
( W^{\G\times \dualG} W_{14}^{\G} W_{12}^{\G} ) \\
&= (\id \tens \id \tens \varphi \tens \widehat{\varphi})
\bigl( \Delta_{\G\times\dualG}(d^*W^{\G})
(\id \tens \Sigma \tens \id)(\id \tens \Delta_{\dualG})(W^{\G*})_{234 } (\id \tens \Sigma \tens \id)(1_{\G \times \dualG} \tens c) \bigr).
\end{align*}

Because $S_{\G\times \dualG}=S_{\G} \tens S_{\dualG}$, using \cite[Proposition~1.6.17]{vaes2001locally}, we obtain

\begin{align*}
&S_{\G\times\dualG}\Bigl(
(\id \tens \id \tens \varphi \tens \widehat{\varphi})
\bigl( \Delta_{\G\times\dualG}(d^*W^{\G})
(\id \tens \Sigma \tens \id)(\id \tens \Delta_{\dualG})(W^{\G*})_{234 } (\id \tens \Sigma \tens \id)(1_{\G \times \dualG} \tens c) \bigr)\Bigr) \\
&= (\id \tens \id \tens \varphi  \tens \widehat{\varphi})
\bigl( (1_{\G \times \dualG} \tens d^* ) W_{34}^{\G} (\Delta_{\G} \tens \id)(W^{\G*})_{134} \Delta_{\G\times \dualG}(c)  \bigr) \\
&= (\id \tens \id \tens \varphi  \tens \widehat{\varphi})
\bigl( (1_{\G \times \dualG} \tens d^* ) W_{14}^{\G*} \Delta_{\G\times \dualG}(c) \bigr).
\end{align*}

A similar computation yields
\[
(\id \tens \id \tens \omega_{\Lambda_{\varphi\tens \widehat{\varphi}}(c),\Lambda_{\varphi\tens \widehat{\varphi}}(d)} )
\bigl( W_{14}^{\G*} W^{\G\times\dualG*} \bigr) = (\id \tens \id \tens \varphi\tens \widehat{\varphi})
\bigl( (1_{\G \times \dualG} \tens d^* ) W_{14}^{\G*} \Delta_{\G\times \dualG}(c)  \bigr).
\]

Therefore, we have
\[
S_{\G\times \dualG}\bigl((\id \tens \id \tens  \omega_{\Lambda_{\varphi\tens \widehat{\varphi}}(c),\Lambda_{\varphi\tens \widehat{\varphi}}(d)} ) 
( W^{\G\times \dualG} W_{14}^{\G} W_{12}^{\G} ) \bigr) 
= (\id \tens \id \tens \omega_{\Lambda_{\varphi\tens \widehat{\varphi}}(c),\Lambda_{\varphi\tens \widehat{\varphi}}(d)} ) 
(W_{14}^{\G*} W^{\G\times \dualG*}) .
\]

Since the linear spans
$\{ \omega_{\Lambda_{\varphi\tens \widehat{\varphi}}(c),\Lambda_{\varphi\tens \widehat{\varphi}}(d)} \mid c,d \in \mathfrak{N}_{\varphi\tens \widehat{\varphi}}\}$
is norm-dense in $\Bop(\Lp{2}{\G\times \dualG})_*$, the equation \ref{eq:S-G-dual} holds, and so the desired identity (\ref{prop:involution-twisted-regular}) follows.

Next, choose $b \in \Lp{1}{\G \times \dualG} \cap \Lp{2}{\G \times \dualG}$ and $x, y \in \Lp{2}{\G}$.
Using Proposition~\ref{prop:twisted-convolution-regular}, Proposition~\ref{prop:TF-properties} and (\ref{prop:involution-twisted-regular}), we compute
\begin{align*}
\bigl\langle b \bigm| \phi^{\G}_y \pi_{\G}(a) x \bigr\rangle 
&= \bigl\langle \pi_{\G}(b)(y) \bigm| \pi_{\G}(a) x \bigr\rangle \\
&= \bigl\langle \pi^{re}_{\G}(a^{\sharp}) \pi_{\G}(b)(y) \bigm| x \bigr\rangle \\
&= \bigl\langle \pi_{\G} \bigl( (W^{\G} a^{\sharp}) \conv_{W_{14}^{\G*}} b \bigr)(y) \bigm| x \bigr\rangle \\
&= \bigl\langle \lambda_{\G \times {\dualG}}^{W_{14}^{\G*}} \bigl( (\varphi \tens \widehat{\varphi})_{W^{\G} a^{\sharp}} \bigr)(b) \bigm| \phi^{\G}_y x \bigr\rangle \\
&= \bigl\langle b \bigm| \lambda_{\G \times {\dualG}}^{W_{14}^{\G*}} \bigl( (\varphi \tens \widehat{\varphi})_a \bigr) \phi^{\G}_y x \bigr\rangle .
\end{align*}
Because $\Lp{1}{\G \times \dualG} \cap \Lp{2}{\G \times \dualG}$ is dense in $\Lp{2}{\G} \tens \Lp{2}{\dualG}$, the equality holds as claimed.
\end{proof}

\begin{corollary}[Square-integrability]\label{cor:square-integrable}
Let $y$ be a unit vector in $\Lp{2}{\G}$. Then the short-time Fourier transform $\phi_y^{\G}$ is an isometric intertwiner between the $W_{14}^{\G}$-projective left corepresentations $W_{23}^{\dualG} W_{13}^{\G}$ and $W^{\G \times \dualG} W_{14}^{\G}$ on $\Lp{2}{\G}$ and $\Lp{2}{\G} \tens \Lp{2}{\dualG}$, respectively. Consequently, $W_{23}^{\dualG} W_{13}^{\G}$ is a sub-corepresentation of $W^{\G \times \dualG} W_{14}^{\G}$ and is therefore square-integrable.
\end{corollary}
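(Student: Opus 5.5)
The plan is to deduce the statement from the Plancherel theorem (Theorem~\ref{prop:STFT-basic}) and the covariance principle (Theorem~\ref{thm:covariance}), with the corepresentation/representation dictionary of Remark~\ref{rem:corep-rep-correspondence} translating between the two pictures.

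\emph{Isometry.} By Theorem~\ref{prop:STFT-basic}, $\|\phi_y^{\G}x\|_2=\|x\|_2\|y\|_2=\|x\|_2$ whenever $\|y\|_2=1$. Hence $\phi^{\G}_y:\Lp{2}{\G}\to\Lp{2}{\G}\tens\Lp{2}{\dualG}$ is a linear isometry.

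\emph{Intertwining.} Set $u=W_{23}^{\dualG}W_{13}^{\G}$ and $v=W^{\G\times\dualG}W_{14}^{\G}$. We must show
\[
v\,(1_{\G}\tens 1_{\dualG}\tens\phi^{\G}_y)=(1_{\G}\tens 1_{\dualG}\tens\phi^{\G}_y)\,u .
\]
The intended orientation is $x=\phi^{\G}_y:H_2=\Lp{2}{\G}\to H_1=\Lp{2}{\G}\tens\Lp{2}{\dualG}$, with $u_1=v$ and $u_2=u$. Both sides are elements of $\Lp{\infty}{\G\times\dualG}\tens\Bop(H_2,H_1)$. It therefore suffices to slice them in the first two legs by the functionals $(\varphi\tens\widehat{\varphi})_a$ for $a$ in the dense subspace $\Lp{1}{\G}\tens\Lp{1}{\dualG}$ of the predual $\Lp{1}{\G\times\dualG}$; equality of all these slices gives equality of the operators, by density and normality.

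Slicing the right-hand side gives $\phi^{\G}_y\,\pi_{\G}(a)$, by the definition of $\pi_{\G}$. Slicing the left-hand side gives $\bigl((\varphi\tens\widehat{\varphi})_a\tens\id\bigr)(W^{\G\times\dualG}W_{14}^{\G})\,\phi^{\G}_y$, which by definition is $\lambda^{W_{14}^{\G*}}_{\G\times\dualG}\bigl((\varphi\tens\widehat{\varphi})_a\bigr)\phi^{\G}_y$. This uses that $W^{\G\times\dualG}\Omega^*$ with $\Omega=W_{14}^{\G*}$ is exactly $v$, the $\Omega^*=W_{14}^{\G}$-projective left regular corepresentation of $(\G\times\dualG)_{W_{14}^{\G*}}$, as recalled before Remark~\ref{rem:sqr-int-galoismap}. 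The two slices agree by Theorem~\ref{thm:covariance}.

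\emph{Sub-corepresentation and square-integrability.} Since $\phi^{\G}_y$ is an isometric intertwiner, the projection $P=\phi^{\G}_y(\phi^{\G}_y)^*$ commutes with $v$. This requires checking that $(\phi^{\G}_y)^*$ also intertwines; that follows by taking adjoints of the intertwining relation together with the unitarity of $u$ and $v$. So $u$ is unitarily equivalent to the restriction of $v$ to $P(\Lp{2}{\G}\tens\Lp{2}{\dualG})$. By Remark~\ref{rem:sqr-int-galoismap}, $v$ is square-integrable: the associated coaction is that of a Galois object, hence integrable. Square-integrability passes to sub-corepresentations, because the coaction of $u$ is the compression of the coaction of $v$ by $1\tens P$. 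Integrability of the compressed coaction follows from $\mathfrak{M}_{\varphi\tens\id}$ being stable under $(1\tens P)\cdot(1\tens P)$; one can also cite \cite[Section 10.2]{De phd} for the equivalence between square-integrability and being a subcorepresentation of the regular one.

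The main obstacle is this last heredity step. First I would try the direct compression argument on weights. If that gets messy, I would instead invoke De Commer's characterization of square-integrable irreducible projective corepresentations as those embedding into the regular one.
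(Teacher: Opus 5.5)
Your proposal is correct and follows essentially the same route as the paper: the isometry comes from Theorem~\ref{prop:STFT-basic}, the intertwining relation from slicing and applying Theorem~\ref{thm:covariance}, and square-integrability is inherited from $W^{\G\times\dualG}W_{14}^{\G}$ via Remark~\ref{rem:sqr-int-galoismap}; the only difference in the intertwining step is that the paper pairs against vector functionals $\omega_{a,b}$ and uses $\pi_{\G}(j^{-1}(\omega_{a,b}))$, where you slice by $(\varphi\tens\widehat{\varphi})_a$ on a dense subspace. The paper simply asserts the heredity of square-integrability, and your compression argument for that step and your remark on the orientation of the intertwiner supply details it leaves out.
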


\begin{proof}
Take $x, y \in \Lp{2}{\G}$ with $\| y \|_2 = 1$, and let $a,b,c \in \Lp{2}{\G\times \dualG }$.
By Theorem~\ref{thm:covariance}, we have
\begin{align*}
&\bigl\langle (1_{\G\times \dualG}  \tens \phi_y^{\G}) \bigl( (W_{23}^{\dualG} W_{13}^{\G})(a\tens x)\bigr) \bigm| b\tens c \bigr \rangle \\
&=\bigl\langle \phi_y^{\G} \bigl( (\omega_{a,b} \tens \id)(W_{23}^{\dualG} W_{13}^{\G}) x \bigr) \bigm| c \bigr\rangle 
= \bigl\langle\phi_y^{\G} \bigl( \pi_{\G}(j^{-1}(\omega_{a,b})) x \bigr)  \bigm| c \bigr\rangle \\
&=\bigl\langle \lambda_{\G \times {\dualG}}^{W_{14}^{\G*}} ( \omega_{a,b} ) \; \phi^{\G}_y x  \bigm| c \bigr\rangle 
=\bigl\langle \bigl( (\omega_{a,b} \tens \id)(W^{\G \times \dualG} W_{14}^{\G}) \bigr) (\phi_y^{\G} x)  \bigm| c \bigr\rangle \\
&= \bigl\langle \bigl(W^{\G \times \dualG} W_{14}^{\G}\bigr) ( (1_{\G\times \dualG}  \tens \phi_y^{\G}) (a\tens x) ) \bigm| b\tens c \bigr \rangle.
\end{align*}
Consequently, it follows that
\[
(1_{\G\times \dualG}  \tens \phi_y^{\G}) (W_{23}^{\dualG} W_{13}^{\G})
= W^{\G \times \dualG} W_{14}^{\G} (1_{\G\times \dualG} \tens \phi_y^{\G}).
\]
By Theorem~\ref{prop:STFT-basic}, we know that $\phi_y^{\G}$ is an isometry, and hence $W_{23}^{\dualG} W_{13}^{\G}$ is a sub-corepresentation of $W^{\G \times \dualG} W_{14}^{\G}$.
Since the projective left regular representation $W^{\G \times \dualG} W_{14}^{\G}$ is square-integrable (see Remark~\ref{rem:sqr-int-galoismap}), the corepresentation $W_{23}^{\dualG} W_{13}^{\G}$ as sub-corepresentation of $W^{\G \times \dualG} W_{14}^{\G}$ inherits this property.
\end{proof}

% Returning to the classical setting, the covariance principle can also be understood at the level of convolution algebras. Let $G$ be an LCA group. Define the $\sigma$-twisted left regular representation on $\Lp{2}{G \times \widehat{G}}$ by
% \[
% \widehat{\lambda}_{G \times \widehat{G}}^{\sigma} : \Lp{1}{G \times \widehat{G}} \to \Bop(\Lp{2}{G \times \widehat{G}}), \quad
% \widehat{\lambda}_{G \times \widehat{G}}^{\sigma}(h) F (\nu_2) = (h \conv_\sigma F)(\nu_2).
% \]
%
% For $f, g \in \Lp{2}{G}$ and $h \in \Lp{1}{G \times \widehat{G}}$, one has on one hand
% \[
% \int_{G \times \widehat{G}} \phi_g(\pi(\nu_1) f)(\nu_2) h(\nu_1) \, \mathrm{d}\nu_1
% = \langle \widehat{\pi}(h) f \mid \pi(\nu_2) g \rangle
% = \phi_g(\widehat{\pi}(h) f)(\nu_2),
% \]
% and on the other hand
% \[
% \int_{G \times \widehat{G}} \lambda_{G \times \widehat{G}}^{\sigma}(\nu_1) \phi_g f(\nu_2) h(\nu_1) \, \mathrm{d}\nu_1
% = \widehat{\lambda}_{G \times \widehat{G}}^{\sigma}(h)(\phi_g f)(\nu_2).
% \]
% Since $\phi_g(\pi(\nu_1) f)(\nu_2) = \lambda_{\sigma}(\nu_1) \phi_g f(\nu_1^{-1} \nu_2)$, we obtain
% \[
% \phi_g(\widehat{\pi}(h) f) = \widehat{\lambda}_{G \times \widehat{G}}^{\sigma}(h) (\phi_g f),
% \]
% which is precisely the classical counterpart of Theorem~\ref{thm:covariance} expressed in the language of convolution algebras.

\bigskip
\section{Uncertainty principles and Kac-Paljutkin quantum group}\label{sec6}
\medskip

This section extends several classical uncertainty principles to the setting of unimodular Kac algebras. Uncertainty principles lie at the heart of time-frequency analysis, quantifying the inherent trade-off between localisation in time and frequency. We first prove a Lieb-type inequality for the short-time Fourier transform, then establish entropy-based uncertainty relations,  and finally illustrate the previous results with Kac--Paljutkin finite quantum group.

Let $\G$ be a unimodular Kac algebra with trace $\varphi$. For $x \in \Lp{p}{\G},\; 1 \le p \le \infty$, denote by $\mathcal{S}(x) = \varphi(\range(x))$ the trace of the range projection of $x$. The Donoho--Stark uncertainty principle for unimodular Kac algebras \cite[Proposition~3.3]{Liu2017} states that for any non-zero $x \in \Lp{1}{\G} \cap \Lp{2}{\G}$,
\[
\mathcal{S}(x) \, \mathcal{S}(\cF x) \ge 1 .
\]
We shall show that, this kind of result also exists for the short-time Fourier transform (see Theorem~\ref{thm:uncertainty-STFT} below).

Firstly, in the classical setting, a convenient substitute for an uncertainty principle valid on every LCA group is provided by Lieb's inequalities for the STFT.  Recall that any LCA group $G$ is topologically isomorphic to $\mathbb{R}^{d} \times G_{0}$, where $d \ge 0$ and $G_{0}$ contains a compact open subgroup \cite[Theorem~24.30]{LCA1970}. Lieb's inequality \cite[Theorem 6.3.1]{Gro1998} states that for $f, g \in \Lp{2}{G}$, there exists
\[
\| \phi_{g} f \|_{\Lp{p}{G \times \widehat{G}}} \le \Bigl( \frac{2}{p} \Bigr)^{d/p} \| f \|_{\Lp{2}{G}} \| g \|_{\Lp{2}{G}}, \qquad 2 \le p < \infty .
\]
Hence, before the uncertainty principle for Kac algebras is given, we also need to establish the inequality of this type on unimodular Kac algebras.

\begin{theorem}[Lieb's uncertainty inequality]\label{thm:lieb-up}
Let $\G$ be a unimodular Kac algebra. For every $x, y \in \mathfrak{N}_{\varphi}$, we have
\[
\| \phi^{\G}_{y} x \|_{p} \le \|x\|_2 \, \| y \|_2 \qquad 2 \le p < \infty,
\]
and
\[
\| \phi^{\G}_{y} x \|_{p} \ge \|x\|_2 \, \| y \|_2\qquad 0 < p \le 2.
\]
\end{theorem}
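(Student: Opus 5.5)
The plan is to combine the Plancherel theorem (Theorem~\ref{prop:STFT-basic}) with an $L^\infty$ bound for $\phi^{\G}_y x$, and then interpolate between $p=2$ and $p=\infty$ in the noncommutative $L^p$-spaces of the tracial von Neumann algebra $\Lp{\infty}{\G}\tens\Lp{\infty}{\dualG}$ with trace $\varphi\tens\widehat{\varphi}$. The constant $1$, in place of the classical $(2/p)^{d/p}$, reflects the fact that we only use the $L^\infty$ endpoint, which is sharp only in the discrete/compact part.

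\textbf{Step 1: the $L^\infty$ bound.} By Remark~\ref{rmk:another def of STFT}, for $x,y\in\mathfrak{N}_{\varphi}$,
\[
\phi^{\G}_{y}x=(\id\tens\cF)\bigl((1_{\G}\tens x)(S_{\G}\tens\id)\Delta_{\G}(y^{*})\bigr).
\]
Write $X=(1_{\G}\tens x)(S_{\G}\tens\id)\Delta_{\G}(y^{*})$. I would like $\|\phi^{\G}_y x\|_\infty\le \|x\|_2\|y\|_2$. The natural route is to view $(\id\tens\cF)$ slicewise as $(\id\tens\lambda_\G)$ applied to $(\id\tens\varphi)$-type functionals. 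That is, $\phi^{\G}_y x=(\id\tens\id\tens\varphi)\bigl(W^{\G}_{23}\,\cdot\,X_{13}\bigr)$ suitably interpreted. The norm then splits by Cauchy--Schwarz for the operator-valued weight $\id\tens\varphi$:
\[
\|(\id\tens\varphi)(A^*B)\|\le \|(\id\tens\varphi)(A^*A)\|^{1/2}\|(\id\tens\varphi)(B^*B)\|^{1/2},
\]
with $A=1\tens x^*$ and $B$ built from $(S_\G\tens\id)\Delta_\G(y^*)$, the unitary $W$ being absorbed. This gives $(\id\tens\varphi)(1\tens xx^*)=\|x\|_2^2\,1$. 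Using invariance of $\varphi$ and $\varphi\circ S_\G=\varphi$ in the Kac case, $(\id\tens\varphi)\bigl(\Delta_\G(yy^*)\bigr)=\varphi(yy^*)1=\|y\|_2^2\,1$. This yields $\|\phi^{\G}_y x\|_\infty\le\|x\|_2\|y\|_2$.

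An alternative route, which I expect to be cleaner, is duality. For $a\in\Lp{1}{\G}$ and $b\in\Lp{1}{\dualG}$, Theorem~\ref{prop:STFT-basic} gives
\[
|\langle\phi_y^\G x\mid a\tens b\rangle|=|\langle x\mid\pi_\G(a\tens b)y\rangle|\le\|\pi_\G(a\tens b)\|\,\|x\|_2\|y\|_2\le\|a\tens b\|_1\|x\|_2\|y\|_2,
\]
since $\pi_\G$ is contractive. Extending this to all $\nu\in\Lp{1}{\G\times\dualG}$, again by contractivity of $\pi_\G$, identifies $\phi_y^\G x$ as an element of $(L^1)^*=L^\infty$ with norm at most $\|x\|_2\|y\|_2$. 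The main obstacle is precisely this identification: checking that pairing against $L^1\cap L^2$ extends correctly, which should follow by density.

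\textbf{Step 2: the range $2\le p<\infty$.} Set $F=\phi_y^\G x$, and normalize so that $\|x\|_2\|y\|_2=1$. Then $\|F\|_2=1$ and $\|F\|_\infty\le1$. The tracial Hölder inequality gives
\[
\|F\|_p^p=(\varphi\tens\widehat\varphi)(|F|^p)\le\|F\|_\infty^{p-2}\,(\varphi\tens\widehat\varphi)(|F|^2)\le 1.
\]
Alternatively one can use $|F|^p\le\|F\|_\infty^{p-2}|F|^2$ via functional calculus.

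\textbf{Step 3: the range $0<p\le2$.} Here $|F|^2\le\|F\|_\infty^{2-p}|F|^p\le|F|^p$ by functional calculus, since $t^2\le t^p$ for $0\le t\le 1$. Taking the trace gives $1=\|F\|_2^2\le\|F\|_p^p$. Undoing the normalization, using homogeneity in $x$ and $y$, yields both inequalities.
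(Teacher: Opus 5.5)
Your proposal is correct and follows the paper's proof. The paper obtains the $L^\infty$ bound exactly by your duality route: it pairs $\phi^{\G}_y x\in\mathfrak{N}_{\varphi\tens\widehat{\varphi}}$ against $\nu\in\mathfrak{M}_{\varphi\tens\widehat{\varphi}}$ with $\|\nu\|_1\le 1$, uses $|\langle \pi_{\G}(\nu)y\mid x\rangle|\le\|\nu\|_1\|x\|_2\|y\|_2$, and then derives both ranges of $p$ as in your Steps 2 and 3, by comparing $t^p$ with $t^2$ on $[0,1]$ via functional calculus and using Plancherel. Your first (Cauchy--Schwarz) route is not needed; as sketched, ``absorbing'' $W^{\G}$ inside the slice map is not automatic and would need a separately justified identity such as $(\varphi\tens\id)(W^{\G*}(z\tens 1)W^{\G})=\varphi(z)1$.
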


\begin{proof}
For every $x, y \in \mathfrak{N}_{\varphi}$, by Remark~\ref{rmk:another def of STFT}, we know that
$\phi^{\G}_{y} x \in \mathfrak{N}_{\varphi \tens \widehat{\varphi}}$.

On one hand,  we estimate the $L^{\infty}$-norm:
\begin{align*}
\| \phi^{\G}_{y} x \|_{\infty}
&= \| (\phi^{\G}_{y} x)^{*} \|_{\infty}
 = \sup_{\substack{\nu \in \mathfrak{M}_{\varphi \otimes \widehat{\varphi}}\\ \| \nu \|_{1} \le 1}}
   \bigl| (\varphi \tens \widehat{\varphi}) \bigl( \nu (\phi^{\G}_{y} x)^{*} \bigr) \bigr| \\
&= \sup_{\substack{\nu \in \mathfrak{M}_{\varphi \otimes \widehat{\varphi}}\\ \| \nu \|_{1} \le 1}}
   \bigl| \langle (\Lambda_{\varphi}\otimes \Lambda_{\widehat{\varphi}})(\nu) \mid \phi^{\G}_{\Lambda_{\varphi}(y)} \Lambda_{\varphi}(x) \rangle \bigr| \\
&= \sup_{\substack{\nu \in \mathfrak{M}_{\varphi \otimes \widehat{\varphi}}\\ \| \nu \|_{1} \le 1}}
   \bigl| \langle \pi_{\G}(\nu)(\Lambda_{\varphi}(y)) \mid \Lambda_{\varphi}(x) \rangle \bigr| \\
&\le \sup_{\substack{\mathfrak{M}_{\varphi \otimes \widehat{\varphi}}\\ \| \nu \|_{1} \le 1}}
   \| \nu \|_{1} \, \| x \|_2 \, \| y \|_2
   \le \| x \|_2 \, \| y \|_2.
\end{align*}
Together with the Plancherel identity $\| \phi^{\G}_{y} x \|_{2} = \| x \|_2 \, \| y \|_2$ (by Theorem~\ref{prop:STFT-basic}), we now obtain
\[
\| \phi^{\G}_{y} x \|_{2} = \| x \|_2 \, \| y \|_2, \qquad 
\| \phi^{\G}_{y} x \|_{\infty} \le \| x \|_2 \, \| y \|_2 .
\]
%Then, applying the standard complex interpolation method to the operator $\phi^{\G}_{y}$ (see e.g. \cite[Section 2.1]{caspers2013p}),  it is easily proved that $\| \phi^{\G}_{y} x \|_{p} \le \| x \|_2 \, \| y \|_2$ for all $p\in [2, \infty)$.

%On the other hand, for the case $p<2$,
We consider the self-adjoint element $\frac{|\phi_y^{\G}x|}{\|x\|_2\|y\|_2}$ whose $L^\infty$-norm is not more that $1$ by the above estimation. 
By functional calculus, we obtain that 
\[\left(\frac{|\phi_y^{\G}x|}{\|x\|_2\|y\|_2}\right)^p \le \left(\frac{|\phi_y^{\G}x|}{\|x\|_2\|y\|_2}\right)^2, \quad 2\le p < \infty ,\]
and
\[\left(\frac{|\phi_y^{\G}x|}{\|x\|_2\|y\|_2}\right)^p \ge \left(\frac{|\phi_y^{\G}x|}{\|x\|_2\|y\|_2}\right)^2, \quad 0<p \le 2,\]
which imply that $\| \phi^{\G}_{y} x \|_{p} \le  \|x\|_2 \, \| y \|_2$ for all $p\in  [2,\infty)$ and  $\| \phi^{\G}_{y} x \|_{p} \ge \|x\|_2 \, \| y \|_2$ for all $p\in(0, 2].$

\end{proof}

Next, in time-frequency analysis, spectral entropy measures how the energy of a signal is distributed over the time-frequency plane; a low entropy indicates a sharply concentrated signal, which corresponds to an ``optimal'' window.  In the quantum setting, the von Neumann entropy, widely used in quantum information theory, naturally replaces the classical Shannon entropy.

For a $\varphi$-measurable element $x$, the von Neumann entropy of $| x |^{2}$ is defined as
\[
\entropy(| x |^{2}) := -\varphi\bigl( x^{*} x \log(x^{*} x) \bigr).
\]

\begin{proposition}[Entropy inequality]\label{prop:entropy}
Let $\G$ be a unimodular Kac algebra. For any $x, y \in \Lp{2}{\G}$, it holds that
\[
\entropy\bigl( | \phi^{\G}_{y} x |^{2} \bigr) \ge
-2 \, \| x \|_{2}^{2} \, \| y \|_{2}^{2} \bigl( \log \| x \|_{2} + \log \| y \|_{2} \bigr).
\]
\end{proposition}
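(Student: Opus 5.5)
The plan is to combine the operator-norm bound with the Plancherel identity, exactly as in Theorem~\ref{thm:lieb-up}. Set $F := \phi^{\G}_{y} x$ and $A := |F|^{2} = F^{*}F$. If $x=0$ or $y=0$ both sides vanish (with the convention $0\log 0 = 0$), so we assume $x,y\neq 0$.

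First I will treat $x, y \in \mathfrak{N}_{\varphi}$. By Remark~\ref{rmk:another def of STFT}, $F \in \mathfrak{N}_{\varphi\tens\widehat{\varphi}}$. The proof of Theorem~\ref{thm:lieb-up} gives
\[
\|F\|_{\infty} \le \|x\|_2\|y\|_2, \qquad (\varphi\tens\widehat{\varphi})(A) = \|F\|_2^{2} = \|x\|_2^{2}\|y\|_2^{2}.
\]
Put $c := \|x\|_2^{2}\|y\|_2^{2}$. Then $0 \le A \le c\cdot 1$ in the operator sense.

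Since $t\mapsto \log t$ is operator monotone, and $A$ commutes with $\log A$, functional calculus on the spectrum of $A$ (contained in $[0,c]$) gives
\[
A\log A \le A \log c .
\]
Pointwise this is just $t\log t \le t\log c$ for $0\le t\le c$, so operator monotonicity is not even needed. Applying the positive trace $\varphi\tens\widehat{\varphi}$ yields
\[
\entropy(A) = -(\varphi\tens\widehat{\varphi})(A\log A) \ge -\log c\,(\varphi\tens\widehat{\varphi})(A) = -c\log c .
\]
Finally, $-c\log c = -2\|x\|_2^{2}\|y\|_2^{2}\bigl(\log\|x\|_2+\log\|y\|_2\bigr)$, which is the claim.

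A point to handle carefully is that $A\log A$ need not be trace-class. Its negative part $A(\log A)_-$ is dominated by $A\cdot\sup_{0<t\le 1}|\log t|$ near $0$, which is problematic; however, $t\log t \ge -e^{-1}$ is bounded but not integrable in infinite trace. Nevertheless, we only need an inequality. I will write $-A\log A = A\log(c/A) - A\log c$, where $A\log(c/A)\ge 0$, so its trace is well-defined in $[0,\infty]$ and nonnegative. This defines $\entropy(A)$ as an extended real in $(-\infty,\infty]$ and gives the bound directly.

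For general $x,y\in\Lp{2}{\G}$, I will approximate by $x_n,y_n\in\mathfrak{N}_{\varphi}$. But the argument above only used $\|F\|_\infty\le\|x\|_2\|y\|_2$ and the Plancherel identity. The first holds for all $x,y\in\Lp{2}{\G}$ by the same supremum computation, using Theorem~\ref{prop:STFT-basic} directly, since $\phi^{\G}_y x\in\Lp{2}{\G\times\dualG}$ is $\varphi\tens\widehat{\varphi}$-measurable. So no limiting argument should be needed beyond that duality bound. The main obstacle is this measure-theoretic bookkeeping, namely making sense of $\entropy$ when $A\log A\notin L^1$, rather than the inequality itself.
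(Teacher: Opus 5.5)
Your argument is correct, but it follows a different route from the paper's proof.

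\textbf{The paper's route.} It first reduces to $x,y\in\mathfrak{M}_{\varphi}$ by approximation, citing a continuity property of the entropy. It then runs the Hirschman--Beckner argument: it sets $F(p)=\|\phi^{\G}_y x\|_p-\|x\|_2\|y\|_2$, notes that $F(p)\le 0$ for $p\ge 2$ (Lieb's inequality, Theorem~\ref{thm:lieb-up}) and $F(2)=0$ (Plancherel), and differentiates at $p=2$ via Terp's lemma to read off $\entropy(|\phi^{\G}_y x|^2)$ from $F'(2)\le 0$.

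\textbf{Your route.} You skip both the approximation and the differentiation. You apply the scalar inequality $t\log t\le t\log c$ on $[0,c]$ directly to $A=|\phi^{\G}_y x|^2$. This uses only the two facts from which the paper's Lieb inequality was itself built: $\|\phi^{\G}_y x\|_\infty\le\|x\|_2\|y\|_2$ and $\|\phi^{\G}_y x\|_2=\|x\|_2\|y\|_2$. Here the two routes give exactly the same inequality. The one-sided derivative at $p=2$ of $(\varphi\tens\widehat{\varphi})(A^{p/2})\le c^{p/2-1}(\varphi\tens\widehat{\varphi})(A)$ is precisely $(\varphi\tens\widehat{\varphi})(A\log A)\le (\log c)\,(\varphi\tens\widehat{\varphi})(A)$.

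\textbf{What your version buys.} It is more elementary and somewhat more robust.
\begin{itemize}
\item The $L^\infty$ bound holds for all $x,y\in\Lp{2}{\G}$, either by the duality computation or because $\{F:\|F\|_\infty\le C\}$ is closed in $L^2$. So you never pass a limit through the entropy, and you avoid relying on continuity of entropy under $L^2$ convergence, which is delicate when the trace is infinite.
\item Your splitting $-A\log A=A\log(c/A)-(\log c)A$ gives $\entropy(A)$ an unambiguous value in $(-\infty,\infty]$. It also removes any need to justify differentiating $p\mapsto\|\cdot\|_p$ under the trace.
\end{itemize}

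\textbf{What the paper's version buys.} Its template is more flexible. Any sharper $L^p$ bound for $p>2$ with equality at $p=2$, such as Lieb's constant $(2/p)^{d/p}$ on $\mathbb{R}^d$, would automatically give a sharper entropy bound. Your pointwise argument can only exploit the $L^\infty$ estimate.
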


\begin{proof}
For $x, y \in \Lp{2}{\G}$ choose nets $\{ x_{\alpha} \} \subset \mathfrak{M}_{\varphi}$ and
$\{ y_{\alpha} \} \subset \mathfrak{M}_{\varphi}$ such that
$\lim_{\alpha} \| \Lambda_{\varphi}(x_{\alpha}) - x \| = \lim_{\alpha} \| \Lambda_{\varphi}(y_{\alpha}) - y \| = 0$.
Then $\lim_{\alpha} \| \phi^{\G}_{\Lambda_{\varphi}(y_{\alpha})} \Lambda_{\varphi}(x_{\alpha}) - \phi^{\G}_{y} x \| = 0$.
By \cite[Remark~5.14]{Liu}, the entropy is continuous with respect to this convergence, and then
$\entropy( | \phi^{\G}_{y} x |^{2} ) = \lim_{\alpha} \entropy( | \phi^{\G}_{\Lambda_{\varphi}(y_{\alpha})} \Lambda_{\varphi}(x_{\alpha}) |^{2} )$.
Thus, we may assume $x, y \in \mathfrak{M}_{\varphi}\subset\mathfrak{N}_{\varphi}$, and in this case, we have $\phi^{\G}_{y} x \in \mathfrak{N}_{\varphi \tens \widehat{\varphi}}$ by Remark~\ref{rmk:another def of STFT}.

Define $F(p) = \| \phi^{\G}_{y} x \|_{p} - \| x \|_{2} \| y \|_{2}$ for $p \ge 2$. By  \cite[Section II, Lemma~18]{terp}, we have that $p\mapsto | x |^{p}$ is differentiable for $p>0$ and then can differentiate $F$ at $p = 2$:
\begin{align*}
F'(2) &= -\frac{1}{2} \| \phi^{\G}_{y} x \|_{2} \log \| \phi^{\G}_{y} x \|_{2}
      -\frac{1}{4} \, \frac{\entropy( | \phi^{\G}_{y} x |^{2} )}{\| x \|_{2} \| y \|_{2}} \\
     &= -\frac{1}{2} \| x \|_{2} \| y \|_{2} \bigl( \log \| x \|_{2} + \log \| y \|_{2} \bigr)
      -\frac{1}{4} \, \frac{\entropy( | \phi^{\G}_{y} x |^{2} )}{\| x \|_{2} \| y \|_{2}} .
\end{align*}
Both of Theorem~\ref{prop:STFT-basic} and the first inequality in Theorem~\ref{thm:lieb-up}  give $F(p) \le 0$ and $F(2) = 0$, whence $F'(2) \le 0$ and directly implies the claimed lower bound for $\entropy( | \phi^{\G}_{y} x |^{2} )$.
\end{proof}

Now, it is time to present the Donoho--Stark uncertainty principle for unimodular Kac algebras.

\begin{theorem}[Donoho--Stark uncertainty principle for the STFT]\label{thm:uncertainty-STFT}
For any $x, y \in \Lp{2}{\G}$,
\begin{equation}\label{eq:uncertainty-STFT}
\mathcal{S}\bigl( \phi^{\G}_{y} x \bigr) \ge 1 .
\end{equation}
\end{theorem}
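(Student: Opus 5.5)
The plan is to combine the Plancherel identity (Theorem~\ref{prop:STFT-basic}) with the $L^\infty$ bound from the proof of Theorem~\ref{thm:lieb-up}. The argument runs parallel to the classical proof that a nonzero STFT cannot be supported on a set of measure less than one. Note that the statement is only meaningful for nonzero $x,y$: if $\phi_y^{\G}x=0$ its range projection is $0$. So I assume $x\neq0\neq y$ and set $F=\phi^{\G}_y x\in \Lp{2}{\G}\tens\Lp{2}{\dualG}=\Lp{2}{\G\times\dualG}$. By Theorem~\ref{prop:STFT-basic} we have $\|F\|_2=\|x\|_2\|y\|_2\neq0$.

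\textbf{Step 1 (operator norm bound).} First take $x,y\in\mathfrak{N}_{\varphi}$. The proof of Theorem~\ref{thm:lieb-up} gives $\|F\|_\infty\le\|x\|_2\|y\|_2$, by pairing with $\nu$ in the unit ball of $L^1$ and using $\|\pi_{\G}(\nu)\|\le\|\nu\|_1$. For general $x,y\in\Lp{2}{\G}$ I redo this directly. Regard $F$ as a $(\varphi\tens\widehat\varphi)$-measurable operator and note that, for $\nu\in\mathfrak{M}_{\varphi\tens\widehat\varphi}$,
\[
\bigl|(\varphi\tens\widehat\varphi)(\nu F^*)\bigr|=\bigl|\langle x\mid\pi_{\G}(\nu)y\rangle\bigr|\le\|\nu\|_1\|x\|_2\|y\|_2 .
\]
The left-hand side is exactly $|\langle \nu\mid F\rangle|$ in $L^2$, by Theorem~\ref{prop:STFT-basic} extended by linearity and density from elementary tensors to $\nu\in L^1\cap L^2$, using contractivity of $\pi_{\G}$. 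Since the dual of $L^1$ is $L^\infty$, this shows that $F$ is a bounded operator with $\|F\|_\infty\le\|x\|_2\|y\|_2$. Alternatively one can approximate $x,y$ by elements of $\mathfrak{N}_\varphi$ and use lower semicontinuity of the operator norm under $L^2$-convergence.

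\textbf{Step 2 (Hölder with the range projection).} Let $P=\range(F)$, the range (left support) projection, so $F=PF$. Using traciality,
\[
\|F\|_2^2=(\varphi\tens\widehat\varphi)(F^*PF)=(\varphi\tens\widehat\varphi)(P\,FF^*\,P)\le\|F\|_\infty^2\,(\varphi\tens\widehat\varphi)(P)=\|F\|_\infty^2\,\mathcal{S}(F).
\]
If $\mathcal{S}(F)=\infty$ there is nothing to prove. Otherwise, combining this with Step 1 and Plancherel gives
\[
\|x\|_2^2\|y\|_2^2\le\|x\|_2^2\|y\|_2^2\,\mathcal{S}(F),
\]
and dividing yields $\mathcal{S}(\phi^{\G}_y x)\ge1$.

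The main obstacle is Step 1 for general $L^2$ vectors, that is, justifying that $F$ really is a bounded element of $\Lp{\infty}{\G\times\dualG}$ and not merely measurable. I would handle it through the $L^1$–$L^\infty$ duality pairing as above. It is important to check that the pairing $(\varphi\tens\widehat\varphi)(\nu F^*)$ agrees with the $L^2$ inner product for $\nu\in L^1\cap L^2$, which holds because $\Lambda$ intertwines the trace with the inner product. The remaining steps are routine.
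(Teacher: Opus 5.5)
Your argument is correct, and it takes a different route from the paper. The paper normalizes $\|x\|_2=\|y\|_2=1$ and chains two inequalities:
\begin{itemize}
\item the bound $\log\mathcal{S}(z)\ge\entropy(|z|^2)$ for normalized $z$, quoted from the entropic proof of the Donoho--Stark principle for Kac algebras;
\item Proposition~\ref{prop:entropy}, i.e.\ $\entropy(|\phi^{\G}_y x|^2)\ge 0$.
\end{itemize}
That proposition is obtained by differentiating $p\mapsto\|\phi^{\G}_y x\|_p$ at $p=2$. This uses the whole family of Lieb bounds $\|\phi^{\G}_y x\|_p\le\|x\|_2\|y\|_2$ for $p\ge 2$, after first reducing to $x,y\in\mathfrak{M}_\varphi$ via continuity of the entropy.

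You instead use only the two endpoints, Plancherel ($p=2$) and the operator-norm bound ($p=\infty$). You combine them with the elementary estimate $\|F\|_2^2\le\|F\|_\infty^2\,\mathcal{S}(F)$, which follows from $PFF^*P\le\|F\|_\infty^2 P$. This is the H\"older-type proof of Donoho--Stark rather than the entropic one.

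Your route is shorter and more self-contained. It needs neither the differentiability of $p\mapsto|z|^p$ nor the entropy-continuity approximation. Your duality argument, or alternatively the $L^2$-closedness of norm balls of $\Lp{\infty}{\G\times\dualG}$, gives the $L^\infty$ bound directly for all $x,y\in\Lp{2}{\G}$, whereas the paper only states Theorem~\ref{thm:lieb-up} on $\mathfrak{N}_\varphi$.

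What the paper's route buys is a stronger intermediate result. Since $\log\mathcal{S}\ge\entropy$, the inequality $\entropy(|\phi^{\G}_y x|^2)\ge 0$ for normalized $x,y$ is a sharper uncertainty statement than $\mathcal{S}(\phi^{\G}_y x)\ge 1$.

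Your remark that $x$ and $y$ must be nonzero is also right, since $\mathcal{S}(0)=0$. The paper uses this implicitly when it normalizes.
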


\begin{proof}
We may assume $\| x \|_{2} = \| y \|_{2} = 1$.  For a unimodular Kac algebra, one has the general inequality
$\log \mathcal{S}(z) \ge \entropy(| z |^{2})$ for any non-zero $z \in \Lp{2}{\G}$ (see the first proof of \cite[Proposition~3.3]{Liu2017}).
Then, Combined with Proposition~\ref{prop:entropy}, the inequality above  yields
\[
\log \mathcal{S}\bigl( \phi^{\G}_{y} x \bigr) \ge \entropy\bigl( | \phi^{\G}_{y} x |^{2} \bigr) \ge 0 ,
\]
which is equivalent to \eqref{eq:uncertainty-STFT}.
\end{proof}

At the end of this paper, we will give an example that provides a non‑trivial, non‑commutative and non‑cocommutative unimodular Kac algebra for which the equality case $\mathcal{S}(\phi^{\G}_{y}x)=1$ in \eqref{eq:uncertainty-STFT} can actually occur.

\begin{example}[Kac--Paljutkin quantum group]
Consider the eight-dimensional Kac--Paljutkin finite quantum group $\mathbb{KP}$, the smallest Hopf--von Neumann algebra that is neither commutative nor cocommutative (see \cite{kac1966finite} for a complete description).

As a $C^{*}$-algebra, $\mathbb{KP}=\mathbb{C}\oplus\mathbb{C}\oplus\mathbb{C}\oplus\mathbb{C}\oplus M_{2}(\mathbb{C})$.
Let $G=\mathbb{Z}_{2}\times\mathbb{Z}_{2}=\{e,a,b,c\mid a^{2}=b^{2}=c^{2}=e,\;ab=c\}$.
A typical element of $\mathbb{KP}$ is written $(\lambda_{e},\lambda_{a},\lambda_{b},\lambda_{c},A)$.
Denote by $\alpha_{e}=(1,0,0,0,0)$, $\alpha_{a}=(0,1,0,0,0)$, $\alpha_{b}=(0,0,1,0,0)$, $\alpha_{c}=(0,0,0,1,0)$,
and set $U_{e}=I_{2}$,
\[
U_{a}=\begin{bmatrix}0&i\\1&0\end{bmatrix},\qquad
U_{b}=\begin{bmatrix}0&1\\i&0\end{bmatrix},\qquad
U_{c}=\begin{bmatrix}-1&0\\0&1\end{bmatrix}.
\]
The Hopf $*$-algebra structure is given by
\begin{align*}
\Delta(\alpha_{y})&=\sum_{x\in G}\alpha_{x}\tens\alpha_{x^{-1}y}
                  +\frac12\bigl[(u_{y})_{im}(\bar{u}_{y})_{jn}\bigr]_{ijmn};\\
\Delta(A)&=\sum_{x\in G}\alpha_{x^{-1}}\tens U_{x}AU_{x}^{*}
          +\frac12\sum_{x\in G}\bar{U}_{x}AU_{x}^{T}\tens\alpha_{x};\\
\varepsilon(\lambda_{e},\lambda_{a},\lambda_{b},\lambda_{c},A)&=\lambda_{e};\\
S(\lambda_{e},\lambda_{a},\lambda_{b},\lambda_{c},A)&=(\lambda_{e},\lambda_{a},\lambda_{b},\lambda_{c},A^{T}),
\end{align*}
where $U_{y}=[(u_{y})_{ij}]$, $\bar{U}_{y}=[(\bar{u}_{y})_{ij}]$ and $^{T}$ denotes transpose.

The Haar state of $\mathbb{KP}$ is
\[
h=\frac18(\alpha_{e}^{*}+\alpha_{a}^{*}+\alpha_{b}^{*}+\alpha_{c}^{*})+\frac14(a_{11}^{*}+a_{22}^{*}),
\]
with $\alpha_{x}^{*},a_{ij}^{*}$ the linear functionals satisfying
\[
\alpha_{x}^{*}(\alpha_{y})=\delta_{xy,e},\quad \alpha_{x}^{*}(a_{jk})=0,\quad
a_{ij}^{*}(a_{kl})=\delta_{ik}\delta_{jl},\quad a_{ij}^{*}(\alpha_{x})=0,
\]
and $a_{ij}=0\oplus0\oplus0\oplus0\oplus E_{ij}$ ($E_{ij}$ are the matrix units).

The set $I(\mathbb{KP})$ of irreducible finite‑dimensional unitary representations consists of
\begin{align*}
\rho(1)&=1\oplus1\oplus1\oplus1\oplus\begin{bmatrix}1&0\\0&1\end{bmatrix},\;
\rho(2)=1\oplus-1\oplus-1\oplus1\oplus\begin{bmatrix}1&0\\0&-1\end{bmatrix},\\
\rho(3)&=1\oplus-1\oplus-1\oplus1\oplus\begin{bmatrix}-1&0\\0&1\end{bmatrix},\;
\rho(4)=1\oplus1\oplus1\oplus1\oplus\begin{bmatrix}-1&0\\0&-1\end{bmatrix},\\
\mathfrak{X}&=\begin{bmatrix}
1\oplus-1\oplus1\oplus-1\oplus\begin{bmatrix}0&0\\0&0\end{bmatrix}&
0\oplus0\oplus0\oplus0\oplus\begin{bmatrix}0&e^{i\pi/4}\\e^{-i\pi/4}&0\end{bmatrix}\\[4pt]
0\oplus0\oplus0\oplus0\oplus\begin{bmatrix}0&e^{-i\pi/4}\\e^{i\pi/4}&0\end{bmatrix}&
1\oplus1\oplus-1\oplus-1\oplus\begin{bmatrix}0&0\\0&0\end{bmatrix}
\end{bmatrix}.
\end{align*}
The dual quantum group $\widehat{\mathbb{KP}}$ is the finite quantum group
\[
\ell^{\infty}(\widehat{\mathbb{KP}})=\bigoplus_{\pi\in I(\mathbb{KP})} \Bop(H_{\pi})
=\mathbb{C}\oplus\mathbb{C}\oplus\mathbb{C}\oplus\mathbb{C}\oplus M_{2}(\mathbb{C}),
\]
with Haar weight $\widehat{h}=\alpha_{e}^{*}+\alpha_{a}^{*}+\alpha_{b}^{*}+\alpha_{c}^{*}+2(a_{11}^{*}+a_{22}^{*})$.

Now we compute the STFT on two specific pairs of elements and verify that the lower bound in
\eqref{eq:uncertainty-STFT} is attained.

1. Take $x_{1}=y_{1}=\alpha_{e}$.  Using \eqref{eq:STFT-algebraic}, we have
\[
\phi_{y_{1}}x_{1}=(\id\tens\cF)\bigl((1\tens\alpha_{e})(S\tens\id)\Delta(\alpha_{e})\bigr)
=\alpha_{e}\tens\cF\alpha_{e}.
\]
Since $\alpha_{e}$ is a projection and $8\cF\alpha_{e}=\alpha_{e}+\alpha_{a}+\alpha_{b}+\alpha_{c}+a_{11}+a_{22}$
is the range projection of $\cF\alpha_{e}$, we obtain
\[
\mathcal{S}(\phi_{y_{1}}x_{1})=h(\alpha_{e})\,\widehat{h}(8\cF\alpha_{e})=1.
\]

2. Take $x_{2}=a_{12},\;y_{2}=a_{22}$.  Again by \eqref{eq:STFT-algebraic}, we obtain
\[
\phi_{y_{2}}x_{2}=(\id\tens\cF)\bigl((1\tens a_{12})(S\tens\id)\Delta(a_{22})\bigr)
=(\alpha_{e}+\alpha_{c})\tens\cF a_{12}.
\]
Because $\cF a_{12}=\frac14 e^{-i\pi/4}a_{12}+\frac14 e^{i\pi/4}a_{21}$, the range projection of
$\phi_{y_{2}}x_{2}$ is $(\alpha_{e}+\alpha_{c})\tens(a_{11}+a_{22})$, whence
\[
\mathcal{S}(\phi_{y_{2}}x_{2})=1 .
\]
Thus, the equality in the Donoho--Stark uncertainty principle \eqref{eq:uncertainty-STFT} can indeed be achieved on a genuine non‑commutative, non‑cocommutative quantum group.
\end{example}
	
\bigskip

\bmhead{Acknowledgements} 
We are indebted to Hans G. Feichtinger for inspiring and stimulating communications following his lecture at the 2024 International Workshop on Frames, Operators and Related Topics at the Chern Institute of Mathematics.
We also thank Zhengwei Liu, Franz Luef, and Jinsong Wu for valuable communications following their lectures at Nankai University. 
Furthermore, we acknowledge the Institute for Advanced Study in Mathematics at the Harbin Institute of Technology for its hospitality during the summer workshops.

%The authors would also like to express their appreciation to the referees for carefully reading the manuscript and providing many helpful comments and suggestions that helped improve the representation of this paper.

\bmhead{Funding}
The research was supported by the National Natural Science Foundation of China (NSFC Grant Nos. 
11971348, 12071230 and 12471131).

\end{document}